\newtheorem{thm}{Theorem}
\newtheorem*{thm*}{Theorem}
\newtheorem{lem}{Lemma}
\newtheorem{prop}{Proposition}
\newtheorem*{prop*}{Proposition}
\newtheorem{rem}{Remark}
\newcommand{\ninf}{n\rightarrow\infty}
\renewcommand{\l}{\left}
\renewcommand{\r}{\right}
\begin{document}
\title[Zeroes of the spectral density]
{Zeroes of the spectral density of the
\\
periodic Schr\"odinger operator with
\\
Wigner-von Neumann potential}
\author{Sergey Naboko}
    \address{Department of Mathematical Physics, Institute of Physics, St. Petersburg University, Ulianovskaia 1, St. Petergoff, St. Petersburg, Russia, 198904}
    \email{sergey.naboko@gmail.com}
\author{Sergey Simonov}
    \address{Chebyshev Laboratory, Department of Mathematics and Mechanics,
    Saint-Petersburg State University
    14th Line, 29b, Saint-Petersburg, 199178 Russia}
    \email{sergey\_simonov@mail.ru}
\subjclass{47E05,34B20,34L40,34L20,34E10}
\keywords{Asymptotics of
generalized eigenvectors, Schr\"odinger operator, Wigner-von Neumann
potential}
\date{}

\begin{abstract}
We consider the Schr\"odinger operator $\mathcal L_{\alpha}$ on
the half-line with a periodic background potential and the
Wigner-von Neumann potential of Coulomb type: \linebreak
$\frac{c\sin(2\omega x+\delta)}{x+1}$. It is known that the
continuous spectrum of the operator $\mathcal L_{\alpha}$ has the
same band-gap structure as the free periodic operator, whereas in
each band of the absolutely continuous spectrum there exist two
points (so-called critical or resonance) where the operator
$\mathcal L_{\alpha}$ has a subordinate solution, which can be
either an eigenvalue or a ``half-bound'' state. The phenomenon of
an embedded eigenvalue is unstable under the change of the
boundary condition as well as under the local change of the
potential, in other words, it is not generic. We prove that in the
general case the spectral density of the operator $\mathcal
L_{\alpha}$ has power-like zeroes at critical points (i.e., the
absolutely continuous spectrum has pseudogaps). This phenomenon is
stable in the above-mentioned sense.
\end{abstract}

\maketitle

\section{Introduction}
It is well-known that the Schr\"odinger operator on the half-line
with a summable potential has purely absolutely continuous
spectrum on $\mathbb R_+$ \cite{Titchmarsh-1946-1}. It is also
known \cite{Deift-Killip-1999} that if the potential is square
summable, then the absolutely continuous spectrum still covers the
positive half-line, see also the work \cite{Kiselev-1996}. In this
case however there is no guarantee that the positive singular
spectrum is empty. Moreover, there are explicit examples of
potentials \cite{Naboko-1986,Simon-1997}, which produce dense
point spectrum on $\mathbb R_+$. In a sense, the simplest example
of the potential producing positive eigenvalues is the so-called
Wigner-von Neumann potential $\frac{c\sin(2\omega x)}x$. The
self-adjoint operator $\mathcal L_{0,\alpha}$ given by the
differential expression
$l_0:=-\frac{d^2}{dx^2}+\frac{c\sin(2\omega x)}x$ and the boundary
condition $\psi(0)\cos\alpha-\psi'(0)\sin\alpha=0$, acting in
$L_2(\mathbb R_+)$ on the domain
\begin{equation*}
    \text{Dom}\,\mathcal L_{0,\alpha}=\biggl\{\psi\in L_2(\mathbb R_+)\cap H^2_{loc}(\mathbb R_+): l_0\psi\in L_2(\mathbb R_+), \psi(0)\cos\alpha-\psi'(0)\sin\alpha=0\biggr\}
\end{equation*}
may have an eigenvalue at the point $\omega^2$
\cite{Wigner-von-Neumann-1929}, while the rest of $\mathbb R_+$ is
covered by the purely absolutely continuous spectrum
\cite{Behncke-1991-I}. Due to the subordinacy theory
\cite{Gilbert-Pearson-1987}, this can be seen from the behavior of
generalized eigenvectors (i.e., the solutions of the spectral
equation $l_0\psi=\lambda\psi$): for $\lambda\in\mathbb
R_+\backslash\{\omega^2\}$ there is a base of them with
asymptotics $e^{\pm i\sqrt\lambda x}+o(1)$ as
$x\rightarrow+\infty$. For $\lambda=\omega^2$ the base is
different: $x^{\frac{c}{4\omega}}(\sin(\omega x)+o(1))$ and
$x^{-\frac{c}{4\omega}}(\cos(\omega x)+o(1))$. This point is
called the resonance (or critical) point due to the change of the
type of asymptotics of generalized eigenvectors. One of the
solutions (up to a constant) of the equation
$l_0\psi=\omega^2\psi$ is subordinate. If on top of that it
belongs to $L_2(\mathbb R_+)$ and satisfies the boundary condition
at the origin, then it represents an eigenfunction of $\mathcal
L_{0,\alpha}$. It is clear that the effect of appearance of
positive eigenvalue is highly unstable. It happens with
probability one in a suitable sense: the eigenvalue disappears if
one changes slightly  the boundary condition or adds a summable
(or even compactly supported) perturbation  to the potential. If
the notion of resonances makes sense for the operator (i.e., if
the kernel of the resolvent admits the analytic continuation
through the continuous spectrum), this eigenvalue may become a
resonance under such perturbation. The set of summable functions
which one could utilize as perturbations of the potential
preserving the eigenvalue at the point $\omega^2$ was studied in
\cite{Herbst-2002}. On the other hand, the phenomenon of the
change of the type of asymptotics of generalized eigenvectors at a
point of the absolutely continuous spectrum is stable under
summable perturbations of the potential and does not depend on the
boundary condition. It is therefore meaningful to consider objects
which are a more or less stable in this sense: one can study the
Weyl-Titchmarsh function $m$ or the spectral density $\rho'$ (the
derivative of the spectral function of the operator), which are
related by the equality
\begin{equation*}
    \rho'(\lambda)=\frac1{\pi}\,\text{Im}\,m(\lambda+i0)\text{ for a.a. }\lambda\in\mathbb R.
\end{equation*}
The  behavior of the Weyl function near resonance points has been
studied by Hinton-Klaus-Shaw
\cite{Hinton-Klaus-Shaw-1991,Klaus-1991} and Behncke
\cite{Behncke-1991-I,Behncke-1991-II,Behncke-1994} for
Schr\"odinger and Dirac operators with potentials of, in
particular, Wigner-von Neumann type.

In the present paper we consider the  differential Schr\"odinger
operator $\mathcal L_{\alpha}$ with the potential that is the sum
of the following three parts: a periodic background $q$, a
potential of Wigner-von Neumann type $\frac{c\sin(2\omega
x+\delta)}{x+1}$ and a summable part $q_1$. The operator is
defined by the differential expression
\begin{equation}\label{l}
    l:=-\frac{d^2}{dx^2}
    +q(x)+\frac{c\sin(2\omega x+\delta)}{x+1}+q_1(x)
\end{equation}
and the boundary  condition
$\psi(0)\cos\alpha-\psi'(0)\sin\alpha=0$, $\alpha\in[0;\pi)$. It
acts in $L_2(\mathbb R_+)$ on a suitable domain. We assume that
the function $q$ has the period $a$ and is summable over this
period: $q\in L_1(0;a)$. The parameters $c$ and $\delta$ are real
constants. The operator $\mathcal L_{\alpha}$ is then self-adjoint
in $L_2(\mathbb R_+)$.

The geometry  of the spectrum in the case of periodic background
differs from the case $q(x)\equiv0$. It is well-known (see, e.g.,
\cite{Kurasov-Naboko-2007}) that the absolutely continuous
spectrum of the operator given by the expression $l$ on the whole
real line coincides with the spectrum of the corresponding
periodic operator on the whole real line,
\begin{equation}\label{L-per}
    \mathcal L_{per}=-\frac{d^2}{dx^2}+q(x),
\end{equation}
i.e., has a band-gap structure:
\begin{equation*}
    \sigma(\mathcal L_{per})=:\bigcup\limits_{j=0}^{\infty}([\lambda_{2j};\mu_{2j}]\cup[\mu_{2j+1};\lambda_{2j+1}]),
\end{equation*}
where
$\lambda_0<\mu_0\le\mu_1<\lambda_1\le\lambda_2<\mu_2\le\mu_3<\lambda_3\le\lambda_4<...$
It is clear that the absolutely continuous spectrum of $\mathcal
L_{\alpha}$ coincides set-wise with $\sigma(\mathcal L_{per})$,
although $\sigma(\mathcal L_{per})$ has multiplicity two, whereas
 $\sigma(\mathcal L_{\alpha})$ is simple; note that $\mathcal
L_{\alpha}$ is considered on the half-line. In every band there
exist two resonance points $\nu_{j,+}$ and $\nu_{j,-}$. The type
of asymptotics of generalized eigenvectors at these points is
different from that in other points of the absolutely continuous
spectrum. Subordinate solution appears and thus each of the
resonance points can be an eigenvalue of the operator $\mathcal
L_{\alpha}$ as long as this solution satisfies the boundary
condition and belongs to $L_2(\mathbb R_+)$. The exact locations
of the points $\nu_{j,\pm}$ are determined by the "quantization
conditions" \cite{Kurasov-Naboko-2007}
\begin{equation*}
    k(\nu_{j,+})=\pi\left(j+1-\left\{\frac{a\omega}{\pi}\right\}\right),
    \
    k(\nu_{j,-})=\pi\left(j+\left\{\frac{a\omega}{\pi}\right\}\right),
    \
    j\ge0,
\end{equation*}
where $k(\lambda)$ is the quasi-momentum of the periodic operator
$\mathcal L_{per}$  and $\{\cdot\}$ is the standard fractional part
function. The above-mentioned condition
$\omega\notin\frac{\pi\mathbb Z}{2a}$ guarantees that resonance
points firstly do not coincide with the band boundaries and
secondly that they do not glue up together.

The main  goal of the present paper is to analyze the interplay
between the periodic structure and the ``singular'' Wigner-von
Neumann perturbation. The main result is Theorem \ref{thm result}
of Section \ref{section result}, which essentially tells us that
the spectral density of the operator $\mathcal L_{\alpha}$ has
power-like zeroes at each of the resonance points. Let
$\psi_+(x,\lambda)$ and $\psi_-(x,\lambda)$ be the Bloch solutions
of the periodic equation $-\psi''(x)+q(x)\psi(x)=\lambda\psi(x)$.
Let $\varphi_{\alpha}(x,\lambda)$ be the solution of the Cauchy
problem
\begin{equation}\label{phi}
    l\varphi_{\alpha}=\lambda\varphi_{\alpha},\ \varphi_{\alpha}(0)=\sin\alpha,\ \varphi'_{\alpha}(0)=\cos\alpha.
\end{equation}
Denote by $W\{\psi_+(\lambda),\psi_-(\lambda)\}=W\{\psi_+(\cdot,\lambda),\psi_-(\cdot,\lambda)\}$ the Wronskian of two Bloch solutions.

    \begin{thm*}
    Let $q_1\in L_1(\mathbb R_+)$, $\omega\notin\frac{\pi\mathbb Z}{2a}$, and $\rho'_{\alpha}(\lambda)$ be the spectral  density of the operator $\mathcal L_{\alpha}$. Let the index $j\ge0$. If $\alpha$ is such that the solution $\varphi_{\alpha}(x,\nu_{j,+})$ of \eqref{phi} is not a subordinate one, then there exist two non-zero limits
    \begin{equation*}
        \lim_{\lambda\rightarrow\nu_{j,+}\pm0}\frac{\rho'_{\alpha}(\lambda)}
        {|\lambda-\nu_{j,+}|
        ^
        {\frac{2|c|}{a|W\{\psi_+(\nu_{j,+}),\psi_-(\nu_{j,+})\}|}
        \l|\int\limits_0^a\psi^2_+(t,\nu_{j,+})e^{2i\omega t}dt\r|}}.
    \end{equation*}
    Analogously, if $\alpha$ is such that $\varphi_{\alpha}(x,\nu_{j,-})$ is not subordinate, then there exist two non-zero limits
    \begin{equation*}
        \lim_{\lambda\rightarrow\nu_{j,-}\pm0}\frac{\rho'_{\alpha}(\lambda)}{|\lambda-\nu_{j,-}|^{\frac{2|c|}
        {a|W\{\psi_+(\nu_{j,+}),\psi_-(\nu_{j,+})\}|}
        \l|\int\limits_0^a\psi^2_-(t,\nu_{j,-})e^{2i\omega t}dt\r|}}.
    \end{equation*}
    \end{thm*}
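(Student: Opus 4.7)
The plan is to express the spectral density through the asymptotic behaviour at infinity of the generalized eigenvector $\varphi_\alpha(x,\lambda)$, and to control that behaviour by diagonalizing the spectral equation in the Bloch basis $\psi_\pm$ of the periodic background. The resonance phenomenon will appear as a rank-one coupling of order $\frac{1}{x+1}$ in the reduced system, with a phase that vanishes precisely at $\lambda=\nu_{j,+}$; this is what generates a pair of solutions with power-law growth $(x+1)^{\pm|\mathcal A(\nu_{j,+})|}$, whose exponent $|\mathcal A(\nu_{j,+})|$ will be exactly half the exponent appearing in the statement.

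Setting $\varphi_\alpha=\sigma_+\psi_++\sigma_-\psi_-$ turns \eqref{phi} into a first-order system for $(\sigma_+,\sigma_-)$ whose coefficient matrix depends only on the perturbation $\frac{c\sin(2\omega x+\delta)}{x+1}+q_1(x)$ and on products $\psi_\pm^2,\psi_+\psi_-$. Each such product admits a Fourier expansion over the period $a$ multiplied by $e^{\pm 2ik(\lambda)x}$; multiplying by $e^{\pm 2i\omega x}$ from the Wigner--von Neumann factor, one finds in each off-diagonal entry exactly one harmonic whose total phase vanishes identically at $\lambda=\nu_{j,+}$ -- this is precisely what is encoded by the quantization condition $k(\nu_{j,+})=\pi(j+1-\{a\omega/\pi\})$. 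All other harmonics, together with the $q_1$ contribution, oscillate with non-vanishing phase and can be absorbed by a Harris--Lutz transformation (one integration by parts) into an $L_1(\mathbb R_+)$ remainder with norm uniformly bounded in $\lambda$ near $\nu_{j,+}$. The reduced system takes the model form
\begin{equation*}
\frac{d}{dx}\begin{pmatrix}\sigma_+\\ \sigma_-\end{pmatrix}=\frac{1}{x+1}\begin{pmatrix}0 & \mathcal A(\lambda)e^{i\Phi(\lambda)x}\\ \overline{\mathcal A(\lambda)}e^{-i\Phi(\lambda)x} & 0\end{pmatrix}\begin{pmatrix}\sigma_+\\ \sigma_-\end{pmatrix}+R(x,\lambda)\begin{pmatrix}\sigma_+\\ \sigma_-\end{pmatrix},
\end{equation*}
where $\mathcal A(\lambda)$ is built from $c$, $W\{\psi_+(\lambda),\psi_-(\lambda)\}^{-1}$, $a^{-1}$, and $\int_0^a\psi_+^2(t,\lambda)e^{2i\omega t}dt$; the phase $\Phi(\lambda)$ is linear in $k(\lambda)-k(\nu_{j,+})$, so $\Phi(\nu_{j,+})=0$ and $\Phi'(\nu_{j,+})\ne 0$; and $R(\cdot,\lambda)\in L_1(\mathbb R_+)$ uniformly in $\lambda$.

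At $\lambda=\nu_{j,+}$ the model system is autonomous in $\log(x+1)$ with fundamental solutions of exponents $\pm|\mathcal A(\nu_{j,+})|$. For $\lambda\ne\nu_{j,+}$ close to it, the oscillation $e^{i\Phi(\lambda)x}$ becomes effective only at $x\sim|\Phi(\lambda)|^{-1}\sim|\lambda-\nu_{j,+}|^{-1}$; the dominant solution therefore grows like $(x+1)^{|\mathcal A|}$ on $[0,|\lambda-\nu_{j,+}|^{-1}]$ and stabilizes beyond. Consequently the asymptotic coefficients $c_\pm(\lambda):=\lim_{x\to\infty}\sigma_\pm(x,\lambda)$ of $\varphi_\alpha$ scale like $|\lambda-\nu_{j,+}|^{-|\mathcal A(\nu_{j,+})|}$ with non-zero bounded prefactors depending on $\alpha$ and the side of approach. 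The Weyl function is computed from these via $m_\alpha(\lambda+i0)=-c_+(\lambda)/c_-(\lambda)$ up to Wronskian normalization, so $\rho'_\alpha(\lambda)=\frac{1}{\pi}\mathrm{Im}\,m_\alpha(\lambda+i0)\propto|c_-(\lambda)|^{-2}$ on the a.c.\ spectrum yields the claimed power-law zero with exponent $2|\mathcal A(\nu_{j,+})|$, matching the statement once the explicit form of $\mathcal A$ is inserted. The non-subordinacy hypothesis ensures the prefactor is non-zero; the argument at $\nu_{j,-}$ is identical with $\psi_-$ in place of $\psi_+$.

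\textbf{Main obstacle.} The delicate step is to carry out the Harris--Lutz reduction uniformly in $\lambda$ in a full neighbourhood of $\nu_{j,+}$: the non-resonant harmonics become summable only after division by the phases $\Phi(\lambda)+\frac{2\pi n}{a}$ with $n\ne 0$, whose moduli remain uniformly bounded away from zero, but the removal transformation must be tuned to the exact (small) phase $\Phi(\lambda)$ rather than to $\Phi(\nu_{j,+})=0$ in order to keep the $L_1$-norm of the remainder bounded as $\lambda\to\nu_{j,+}$. A second delicate point is the sharp matching of the ``growth regime'' $x\lesssim|\lambda-\nu_{j,+}|^{-1}$ with the ``oscillation regime'' $x\gtrsim|\lambda-\nu_{j,+}|^{-1}$, needed to extract the precise power of $|\lambda-\nu_{j,+}|$ in $c_-(\lambda)$; the cleanest route is via comparison with an explicitly solvable model (hypergeometric or Bessel-type) on the borderline scale.
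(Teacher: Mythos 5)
Your high-level plan coincides with the paper's: express $\rho'_\alpha$ via the Weyl--Titchmarsh type formula (the paper's Proposition~1), pass to a first-order system by variation of parameters in the Bloch basis, Fourier-decompose the periodic parts of $\psi_\pm^2$ and $\psi_+\psi_-$, use a $\lambda$-uniform Harris--Lutz transformation to kill all non-resonant harmonics, and arrive at a two-by-two ``rotation-times-reflection'' model whose off-diagonal coupling $\propto\frac{1}{x+1}e^{\pm i\Phi(\lambda)x}$ resonates exactly at $\nu_{j,+}$. The paper itself notes that one can work either with this differential model or with its discretization over the period $a$ (and opts for the discrete route, system~\eqref{system for v with epsilon}, because it gives a reusable lemma), so your choice of a continuous model is a legitimate variant. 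You also correctly identify the exponent as twice the modulus of the resonant coefficient, i.e.\ $2\beta$ with $\beta=|\beta_+(\nu_{cr})|$ in the paper's notation, matching the paper's Theorem~\ref{thm result} up to the constants encoded in $\beta_{j,\pm}$.

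Where the proposal falls short is precisely at the step you flag as the ``main obstacle'': the rigorous extraction of the power $|\lambda-\nu_{j,+}|^{2\beta}$ from the two-scale competition between the power-law growth on $x\lesssim|\varepsilon|^{-1}$ and the oscillatory stabilization beyond. Your suggestion --- matching against a hypergeometric or Bessel-type explicitly solvable comparison model at the borderline scale --- is not what the paper does, and it would be delicate to implement because the remainder $R(\cdot,\lambda)$ destroys any exact solvability and because the limit as $\varepsilon\to0$ is a \emph{double-scale} limit, not a WKB-type turning-point problem. The paper instead rescales first: it sets $v_n=\exp\bigl(\beta\int_1^n\frac{\cos(\varepsilon r)}{r}\,dr\bigr)u_n$, which cancels the anticipated power-law prefactor identically, and then proves (Lemma~\ref{lem a priori estimate}) that the rescaled matrix products are uniformly bounded in both $n$ and $\varepsilon$. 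With this a priori estimate in hand, it passes to the slow variable $y=n|\varepsilon|$, writes a Volterra integral equation \eqref{equation for z}, shows the kernel and inhomogeneity converge as $\varepsilon\to\pm0$ (Lemma~\ref{lem convergence in the slow scale}), proves the limit functions $h_\pm$ have nonzero limits at $y\to+\infty$ (Lemma~\ref{lem convergence of h-pm}, via a second Harris--Lutz argument), and finally interchanges $\lim_{\varepsilon\to\pm0}$ and $\lim_{n\to\infty}$ using uniform convergence of the tail product (Lemma~\ref{lem convergence of the tail}). All four of these are substantial lemmas and none is supplied by your proposal. The upshot: you have correctly identified the target model system and the answer it should give, but the analytic engine that converts the heuristic two-scale picture into two genuine non-zero limits --- the normalization that yields uniform boundedness, plus the slow-variable Volterra argument --- is missing, and your proposed replacement (explicit comparison model) is not obviously workable.

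One smaller imprecision: the passage from the asymptotic coefficients $c_\pm(\lambda)$ of $\varphi_\alpha$ to $\rho'_\alpha$ is cleanest via formula \eqref{Weyl-Titchmatsh type formula phi} ($\rho'_\alpha=\frac{1}{2\pi|W||A_\alpha|^2}$ with $c_+=\overline{c_-}=\overline{A_\alpha}$) rather than through a quotient formula for $m_\alpha$, since that quotient has unit modulus when $c_+=\overline{c_-}$; the conclusion $\rho'_\alpha\propto|c_-|^{-2}$ you write down is correct, but the intermediate Weyl-function identity you invoke is not.
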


Note that the  condition of non-subordinacy of $\varphi_{\alpha}$
is indeed satisfied in the generic case. Consider one of the
critical points $\nu_{cr}$. There is a unique $\alpha\in[0;\pi)$
for which $\varphi_{\alpha}$ is subordinate, denote it by
$\alpha_{cr}$. Denote also $\beta:=\frac{|c|}
{a|W\{\psi_+(\nu_{cr}),\psi_-(\nu_{cr})\}|}
\l|\int\limits_0^a\psi^2_{\pm}(t,\nu_{cr})e^{2i\omega t}dt\r|$
where the choice of sign coincides with the one in
$\nu_{cr}=\nu_{j,\pm}$. It follows from \cite{Kurasov-Naboko-2007}
and the analysis below that
$\varphi_{\alpha_{cr}}(x,\nu_{cr})=x^{-\beta}(c_+\psi_+(x,\nu_{cr})+c_-\psi_-(x,\nu_{cr})+o(1))$
as $x\rightarrow+\infty$ where $\psi_+,\psi_-$ are the Bloch
solutions of the periodic equation
$-\psi''(x)+q(x)\psi(x)=\lambda\psi(x)$ and $c_+,c_-$ are some
constants. Therefore $\nu_{cr}$ is an eigenvalue of $\mathcal
L_{\alpha_{cr}}$ iff $\beta>\frac12$. At the same time, for
$\beta>\frac12$ and $\alpha\neq\alpha_{cr}$ the order of the zero
of $\rho'_{\alpha}(\lambda)$ at the point $\nu_{cr}$ is greater
than one. This fact is in exact correspondence with the result of
Aronszajn-Donoghue \cite{Aronszajn-1957}, by which if the point
$\nu$ is an eigenvalue of the operator $\mathcal L_{\alpha_{cr}}$,
then the integral $\int_{\mathbb
R}\frac{d\rho_{\alpha}(\lambda)}{(\lambda-\nu)^2}$ is finite for
every $\alpha\neq\alpha_{cr}$.

It should be mentioned, that  this result in the particular case
of zero background periodic potential $q$ follows from the work of
Hinton-Klaus-Shaw \cite{Hinton-Klaus-Shaw-1991}. They considered
the differential Schr\"odinger operator on the half-line with the
potential, which is an infinite sum of terms of Wigner-von Neumann
type plus a rapidly decreasing term analogous to our $q_1$ but
decaying faster. They studied the behavior of the Weyl-Titchmarsh
function near the critical points in the case $\alpha=\alpha_{cr}$
and $\beta<\frac12$, i.e., when the solution $\varphi_{\alpha}$ is
subordinate but is not an eigenfunction (the so-called half-bound
state). The corresponding result for the case
$\alpha\neq\alpha_{cr}$ and any $\beta>0$ was a by-product of
their analysis. Nevertheless, the most essential part of the
problem in their work is similar to the one in our case. However,
our approach is different from that of papers
\cite{Hinton-Klaus-Shaw-1991,Klaus-1991}, see the detailed
discussion in Section \ref{section discussion}. Our goal was to
elaborate a more general approach (reduction to a model problem),
which would enable us to apply it without any significant changes
to different operators.

Zeroes of the  spectral density of the Schr\"odinger operator
divide the absolutely continuous spectrum of the operator into
independent parts. This phenomenon is called a pseudogap and has a
clear physical meaning. Eigenvalues embedded into the continuous
spectrum have been observed in experiment
\cite{Capasso-et-al-1992}. Operators with Wigner-von Neumann
potentials attracted attention of many other authors, e.g.,
\cite{Buslaev-Matveev-1970,Buslaev-Skriganov-1974,Matveev-1973,Hinton-Klaus-Shaw-1991,Klaus-1991,Behncke-1994,Behncke-1991-II,
Behncke-1991-I,Kurasov-1996,Kurasov-1992,Kurasov-Naboko-2007,Nesterov-2007}.

The paper is organized  as follows. In Section \ref{section
preliminaries}, we state the Weyl-Titchmarsh type formula for the
operator $\mathcal L_{\alpha}$ which was proved in
\cite{Kurasov-Simonov-2011} and serves as our main tool in the
work with the spectral density. In Section \ref{section
discretization}, we write the spectral equation for the operator
$\mathcal L_{\alpha}$ in an equivalent form of a discrete linear
system and restate the Weyl-Titchmarsh type formula in terms of
the asymptotic behavior of solutions of that system. In Section
\ref{section model problem}, we study this discrete system (which
can be regarded as a model problem). This can be viewed as an
independent task. In Section \ref{section result}, we use the
results of the preceding analysis to find the asymptotics of the
spectral density of the operator $\mathcal L_{\alpha}$ near the
critical points. In Section \ref{section discussion}, we make
final comments characterizing our method in comparison to the one
utilized in the work \cite{Hinton-Klaus-Shaw-1991} and give a discussion of a few examples of its applicability.

\section{Preliminaries}\label{section preliminaries}
Spectral properties of second-order differential Schr\"odinger operators are related to the asymptotic behavior of their generalized eigenvectors \cite{Gilbert-Pearson-1987}. One of illustrations of this relation is the Weyl-Titchmarsh type formula for the spectral density in terms of asymptotic coefficients of the solution to the spectral equation that satisfies the boundary condition. The spectral equation $l\psi=\lambda\psi$ can be considered as a perturbation of the periodic equation $-\psi''(x)+q(x)\psi(x)=\lambda\psi(x)$.

Let us denote by $\partial:=\{\lambda_j,\mu_j,\ j\ge0\}$ the "generalized boundary" of
the spectrum of the periodic operator $\mathcal L_{per}$ (it can be not exactly the boundary of the set $\sigma(\mathcal L_{per})$, because the endpoints of the neighboring bands can coincide). We will use the following result (Weyl-Titchmarsh type formula).

    \begin{prop}[\cite{Kurasov-Simonov-2011}]\label{prop Weyl-Titchmarsh type formula}
    Let $\omega\notin\frac{\pi\mathbb Z}{2a}$ and $q_1\in L_1(\mathbb R_+)$. Then for every fixed
    \linebreak
    $\lambda\in\sigma(\mathcal L_{per})\backslash(\partial\cup\{\nu_{j,+},\nu_{j,-},j\ge0\})$ there exists a non-zero constant $A_{\alpha}(\lambda)$ depending on $\lambda$ such that
    \begin{equation}\label{asymptotics of phi-alpha}
    \begin{array}{l}
        \varphi_{\alpha}(x,\lambda)=A_{\alpha}(\lambda)\psi_-(x,\lambda)+
        \overline{A_{\alpha}(\lambda)}\psi_+(x,\lambda)+o(1)\text{ as }x\rightarrow+\infty,
        \\
        \varphi_{\alpha}'(x,\lambda)=A_{\alpha}(\lambda)\psi_-'(x,\lambda)+
        \overline{A_{\alpha}(\lambda)}\psi_+'(x,\lambda)+o(1)\text{ as }x\rightarrow+\infty,
    \end{array}
    \end{equation}
    and the following equality holds:
    \begin{equation}\label{Weyl-Titchmatsh type formula phi}
        \rho'_{\alpha}(\lambda)=\frac1{2\pi|W\{\psi_+(\lambda),\psi_-(\lambda)\}| \; |A_{\alpha}(\lambda)|^2},
    \end{equation}
    where $\rho'_{\alpha}(\lambda)$ is the spectral density of the operator $\mathcal L_{\alpha}$.
    \end{prop}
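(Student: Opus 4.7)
The plan is to combine a variation-of-parameters argument written in the Bloch basis with the classical identification of the spectral density via the boundary value of the Weyl $m$-function. The Wigner-von Neumann tail plus $q_1$ should be treated as a perturbation of the purely periodic equation $-\psi''+q\psi=\lambda\psi$, whose two linearly independent solutions $\psi_\pm(\cdot,\lambda)$ are complex conjugates of each other on the interior of every band and have purely imaginary Wronskian $W\{\psi_+,\psi_-\}$.

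First I would write $\varphi_\alpha(x,\lambda)=a(x)\psi_-(x,\lambda)+b(x)\psi_+(x,\lambda)$ and derive the standard first-order system for $(a,b)$, whose right-hand side is proportional to $V(x)\psi_\pm^2(x,\lambda)/W\{\psi_+,\psi_-\}$ with $V(x):=\tfrac{c\sin(2\omega x+\delta)}{x+1}+q_1(x)$. Existence of the limits $a(\infty)$ and $b(\infty)$ then reduces to convergence of the integrals $\int_0^\infty V(x)\psi_\pm^2(x,\lambda)\,dx$ and $\int_0^\infty V(x)\psi_+(x,\lambda)\psi_-(x,\lambda)\,dx$. The $q_1$-contribution converges trivially since $q_1\in L_1$ and both products are bounded. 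For the Wigner--von Neumann part I would expand the periodic factors of the Bloch solutions into Fourier series; the frequencies occurring in $\psi_\pm^2$ are $\pm 2k(\lambda)+\tfrac{2\pi n}{a}$, $n\in\mathbb Z$, and those in $\psi_+\psi_-$ are $\tfrac{2\pi n}{a}$. The hypothesis $\omega\notin\tfrac{\pi\mathbb Z}{2a}$ together with $\lambda\notin\{\nu_{j,\pm}\}$ guarantees that $\pm 2\omega$ does not match any of these frequencies, so Dirichlet's test applied to each harmonic $\int\tfrac{e^{i\mu x}}{x+1}\,dx$ (with $\mu\ne 0$) yields convergence. Reality of $\varphi_\alpha$ and the identity $\psi_-=\overline{\psi_+}$ force $b(\infty)=\overline{a(\infty)}$, so one defines $A_\alpha(\lambda):=a(\infty)$. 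The same argument applied to the companion solution $\theta_\alpha$ (normalised by $W\{\theta_\alpha,\varphi_\alpha\}=1$) furnishes a constant $B_\alpha(\lambda)$.

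To extract the spectral density I would use $\rho'_\alpha(\lambda)=\tfrac1\pi\operatorname{Im}m_\alpha(\lambda+i0)$ together with the Weyl-circle characterisation of $m_\alpha$: the function $\theta_\alpha(\cdot,z)+m_\alpha(z)\varphi_\alpha(\cdot,z)$ is the unique (up to scaling) $L_2$-solution at infinity for $\operatorname{Im}z>0$. For $z=\lambda+i\varepsilon$ with $\varepsilon\searrow 0$ only one Floquet solution, say $\psi_+$, stays bounded at infinity, so in the Bloch decomposition the coefficient of $\psi_-$ must vanish, giving $m_\alpha(\lambda+i0)=-B_\alpha(\lambda)/A_\alpha(\lambda)$. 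Evaluating $W\{\theta_\alpha,\varphi_\alpha\}=1$ on the asymptotics yields the identity $2i\operatorname{Im}(A_\alpha\overline{B_\alpha})\,W\{\psi_+,\psi_-\}=1$, hence $|\operatorname{Im}(A_\alpha\overline{B_\alpha})|=\tfrac1{2|W\{\psi_+,\psi_-\}|}$. Substituting into $\tfrac1\pi\operatorname{Im}(-B_\alpha/A_\alpha)=\tfrac{\operatorname{Im}(A_\alpha\overline{B_\alpha})}{\pi|A_\alpha|^2}$ produces formula \eqref{Weyl-Titchmatsh type formula phi}, with the correct positive sign forced by $\rho'_\alpha\ge 0$.

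The main technical obstacle is the justification of the asymptotic expansion \eqref{asymptotics of phi-alpha}: one must track errors coming from the coupling between the two Bloch channels and control the oscillatory integrals uniformly on compacta avoiding the critical set. The non-resonance condition is in effect a Diophantine-type requirement on $\omega$ against the quasi-momentum $k(\lambda)$, and is precisely what fails at $\nu_{j,\pm}$, which is why those points must be excluded. A secondary delicate point is the passage from $z=\lambda+i\varepsilon$ to the real axis in the identification $m_\alpha(\lambda+i0)=-B_\alpha/A_\alpha$, which requires continuous dependence of the Bloch solutions on $z$ near the spectrum together with uniform control of the variation-of-parameters tail; once these are in place, the Weyl-disc argument at real $\lambda$ proceeds as usual.
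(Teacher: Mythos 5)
The paper does not prove this proposition: it is cited verbatim from \cite{Kurasov-Simonov-2011}, so there is no internal proof to compare against. Your sketch nonetheless has the right overall shape for a Weyl--Titchmarsh/Kodaira-type formula. In particular the algebra in the second half is sound: with $\theta_\alpha$ the companion solution normalised by $W\{\theta_\alpha,\varphi_\alpha\}=1$ and asymptotic coefficient $B_\alpha$, the Wronskian identity $2i\,\operatorname{Im}(A_\alpha\overline{B_\alpha})\,W\{\psi_+,\psi_-\}=1$ together with $\rho'_\alpha=\tfrac{1}{\pi}\operatorname{Im}m_\alpha(\lambda+i0)$ and $m_\alpha(\lambda+i0)=-B_\alpha/A_\alpha$ reproduces \eqref{Weyl-Titchmatsh type formula phi}.

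The genuine gap is the claim that ``existence of the limits $a(\infty)$, $b(\infty)$ reduces to convergence of the integrals $\int V\psi_\pm^2$, $\int V\psi_+\psi_-$.'' For a coupled $2\times 2$ system $\binom{a}{b}'=M(x)\binom{a}{b}$ in which $\|M(\cdot)\|\notin L^1(\mathbb R_+)$ (as here, since the Wigner--von Neumann term decays only like $1/x$), conditional convergence of $\int M$ via Dirichlet's test does not yield existence of $\lim_{x\to\infty}(a(x),b(x))$: the iterated (Volterra/Neumann) integrals are not controlled by $\int\|M\|$, and neither Gronwall nor Levinson applies directly. The missing step is a Harris--Lutz transformation: subtract the tail $\int_x^\infty$ of the oscillatory part, conjugate by $e^{Q(x)}$ with $Q(x)\to 0$, and thereby reduce the coefficient matrix to one that is genuinely in $L^1$, after which Levinson's theorem gives \eqref{asymptotics of phi-alpha}. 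This is exactly the mechanism \cite{Kurasov-Simonov-2011} uses in the continuous setting and the one the present paper employs in Section~\ref{section discretization} for the discrete analogue. You identify this as ``the main technical obstacle'' but supply no idea that closes it; as written the argument does not establish \eqref{asymptotics of phi-alpha}. The second acknowledged gap (the passage $z\to\lambda+i0$ in the Weyl-disc identification of $m$) is real but more routine once the Harris--Lutz estimates are made locally uniform in $z$ near the real axis; the key missing ingredient is the same transformation, carried out with uniform error control.
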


This result is a generalization of the classical Weyl-Titchmarsh (or Kodaira) formula \cite{Titchmarsh-1946-2,Kodaira-1949}. A variant of this formula for the Schr\"odinger operator with Wigner-von Neumann potential without the periodic background ($q(x)\equiv0$) follows from the results of \cite{Matveev-1973,Brown-Eastham-McCormack-1998}. In the case of discrete Schr\"odinger operator with Wigner-von Neumann potential an analogous formula is also known, see \cite{Damanik-Simon-2006,Janas-Simonov-2010}.

\section{Discretization}\label{section discretization}
In this section we pass over from the spectral equation $l\psi=\lambda\psi$ to a specially constructed discrete linear system essentially equivalent to $l\psi=\lambda\psi$ and rewrite the Weyl-Titchmarsh type formula expressing the spectral density in terms of solutions of this system.

\subsection*{Step 1. Variation of parameters and discretization: perturbation of the monodromy matrix}
First we perform a transformation of the spectral equation to the differential system in order to "get rid" of the periodic background potential (in fact, this simply is a variation of parameters). Consider $\lambda\in\sigma(\mathcal L_{per})\backslash\partial$ and equation $l\psi=\lambda\psi$. Define new vector-valued function $\eta(x)$ by the following equality:
\begin{equation*}
    \left(%
    \begin{array}{c}
    \psi(x) \\
    \psi'(x) \\
    \end{array}%
    \right)
    =
    \left(%
    \begin{array}{cc}
    \psi_-(x,\lambda) & \psi_+(x,\lambda) \\
    \psi_-'(x,\lambda) & \psi_+'(x,\lambda) \\
    \end{array}%
    \right)
    \eta(x).
\end{equation*}
By a straightforward substitution one has:
\begin{equation}\label{system for eta}
    \eta'(x)=L(x,\lambda)\eta(x),
\end{equation}
where
\begin{equation*}
    L(x,\lambda):=
    \frac{\frac{c\sin(2\omega x+\delta)}{x+1}+q_1(x)}{W\{\psi_+(\lambda),\psi_-(\lambda)\}}
    \left(%
    \begin{array}{cc}
    -\psi_+(x,\lambda)\psi_-(x,\lambda) & -\psi_+^2(x,\lambda) \\
    \psi_-^2(x,\lambda) & \psi_+(x,\lambda)\psi_-(x,\lambda) \\
    \end{array}%
    \right).
\end{equation*}
Denote by $\Phi(x_0,x,\lambda)$ the fundamental matrix for the system \eqref{system for eta} (i.e., the matrix solution of \eqref{system for eta} satisfying $\Psi(x_0,x_0,\lambda)\equiv I$). Denote also by $M_n(\lambda)$ the monodromy matrix corresponding to the shift by the period $a$ of the background potential, $M_n(\lambda):=\Phi(a(n-1),an,\lambda)$. Instead of the solution $\eta(x)$ of the differential system \eqref{system for eta} consider the following sequence $\{w_n\}_{n=1}^{\infty}$ of vectors from $\mathbb C^2$:
\begin{equation*}
    w_n:=\eta(a(n-1)),
\end{equation*}
which obviously solves the discrete linear system $w_{n+1}=M_n(\lambda)w_n$. In what follows, we deal with this system  transforming it to a simpler form. Using the standard perturbation methods (see for example \cite{Kurasov-Naboko-2007}) we can determine the matrix $M_n(\lambda)$ up to a summable (in $n$) term.

We have passed from the continuous variable $x$ to the discrete variable $n$. Later, in Section \ref{section model problem} we will return to continuous variables considering a new one, $y$. There is no strict necessity in such a trick. One could think that the discretization determined by the period $a$ of the background potential $q$ helps to get rid of this potential. However, this is not completely true: this role is taken by the above variation of parameters transformation. One could do simpler: the coefficient matrix of the differential system \eqref{system for eta} is the sum of two terms. The first term has a factor $\frac1{x+1}$ multiplied by an infinite sum of the exponential terms coming from the Fourier decomposition of the periodic parts of Bloch solutions. Each of these terms can "resonate" (become constant in $x$) for certain values of $\lambda$. This happens exactly at the resonance points $\nu_{j,\pm}$. The second term is a summable matrix-valued function. Non-resonating exponential terms can be eliminated using the uniform Harris-Lutz transformation in a fashion of \cite{Kurasov-Simonov-2011}. This approach would lead to a simple differential model system equivalent to the equation $l\psi=\lambda\psi$. However, we use the approach of the discretization, which brings some extra technical difficulties and complicates the notation. The reason is that we obtain as a result a discrete model system, which can serve wider needs. In particular, it is possible to consider the discrete Schr\"odinger operator with the discrete Wigner-von Neumann potential which has two critical points on the interval $[-2;2]$ (covered by the absolutely coninuous spectrum of this operator) and to prove that its spectral density has zeroes of the power type at the critical points. See Discussion for the exact formulation of this result. We plan to prove it in a forthcoming paper.

Let us introduce the following notation for uniformly summable sequences. Let $R_n(\lambda)$ be $2\times2$ matrices for $n\in\mathbb N$ and $\lambda\in S$ (where $S$ is an arbitrary set). We write $\{R_n(\lambda)\}_{n=1}^{\infty}\in l^1(S)$, if there exists a sequence of positive numbers $\{r_n\}_{n=1}^{\infty}\in l^1$ such that for every $\lambda\in S$ and $n\in\mathbb N$, $\|R_n(\lambda)\|<r_n$. We start with a simple asymptotic formula for the monodromy matrix:

    \begin{lem}\label{lem M-n(lambda)}
    For every $\lambda\in\sigma(\mathcal L_{per})\backslash\partial$,
    \begin{multline*}
    M_n(\lambda)=I+\frac1{an}\int_{a(n-1)}^{an}
    \frac{c\sin(2\omega t+\delta)}{W\{\psi_+(\lambda),\psi_-(\lambda)\}}
    \\
    \times
    \left(%
    \begin{array}{cc}
    -\psi_+(t,\lambda)\psi_-(t,\lambda) & -\psi_+^2(t,\lambda) \\
    \psi_-^2(t,\lambda) & \psi_+(t,\lambda)\psi_-(t,\lambda) \\
    \end{array}%
    \right)dt
    +R_n^{(1)}(\lambda),
    \end{multline*}
    where $\{R_n^{(1)}(\lambda)\}_{n=1}^{\infty}\in l^1(K)$ for every compact set $K\subset\sigma(\mathcal L_{per})\backslash\partial$.
    \end{lem}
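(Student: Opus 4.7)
The plan is to derive the formula by a first-order Picard iteration of the variation-of-parameters system \eqref{system for eta} on the single period $[a(n-1),an]$, then replace $\frac{1}{t+1}$ by $\frac{1}{an}$ in the leading integral and dump all remaining pieces into $R_n^{(1)}(\lambda)$.

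First I would write the fundamental matrix as the standard Volterra/Neumann series
\begin{equation*}
    M_n(\lambda)=I+\int_{a(n-1)}^{an}L(t,\lambda)\,dt+\sum_{k\ge 2}\int_{a(n-1)}^{an}\!\!\int_{a(n-1)}^{t_1}\!\!\cdots L(t_1,\lambda)\cdots L(t_k,\lambda)\,dt_k\cdots dt_1,
\end{equation*}
and estimate the tail by $\tfrac12(\int_{a(n-1)}^{an}\|L(t,\lambda)\|dt)^2\exp(\int_{a(n-1)}^{an}\|L\|)$. For $\lambda\in K$ with $K\subset\sigma(\mathcal L_{per})\setminus\partial$ compact, the Bloch solutions $\psi_{\pm}(\cdot,\lambda)$ are uniformly bounded and $W\{\psi_+(\lambda),\psi_-(\lambda)\}$ is uniformly bounded away from $0$, so $\|L(t,\lambda)\|\le \frac{C}{t+1}+C|q_1(t)|$ uniformly on $K$. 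Thus the tail is $O(n^{-2})+C(\int_{a(n-1)}^{an}|q_1|)^2+O(n^{-1})\int_{a(n-1)}^{an}|q_1|$, all of which are uniformly $l^1$ in $n$ (the $q_1$ pieces because $\int_0^\infty |q_1|<\infty$ and hence $\int_{a(n-1)}^{an}|q_1|\to 0$, forcing its square to be dominated by itself for large $n$).

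Next I would split the linear term $\int_{a(n-1)}^{an}L(t,\lambda)\,dt$ into its Wigner--von Neumann and $q_1$ parts. The $q_1$-part is bounded uniformly in norm by $C\int_{a(n-1)}^{an}|q_1(t)|dt$, which is a uniformly summable sequence, so it goes into $R_n^{(1)}$. For the Wigner--von Neumann part I write
\begin{equation*}
    \frac{1}{t+1}=\frac{1}{an}+\frac{an-(t+1)}{(t+1)(an)},
\end{equation*}
and observe that on $[a(n-1),an]$ the second summand is $O(n^{-2})$ uniformly; multiplying by the bounded matrix integrand and integrating over an interval of length $a$ contributes an $O(n^{-2})$ error, again uniformly in $\lambda\in K$, which is absorbed into $R_n^{(1)}$. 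What remains is exactly the stated $\frac{1}{an}\int_{a(n-1)}^{an}\frac{c\sin(2\omega t+\delta)}{W}(\cdots)\,dt$.

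The only genuinely delicate point is the uniformity of all the constants in $\lambda$. This requires controlling the Bloch solutions $\psi_{\pm}(x,\lambda)$ and the Wronskian $W\{\psi_+(\lambda),\psi_-(\lambda)\}$ jointly in $(x,\lambda)$ for $\lambda$ in a compact subset of the interior of a band. Since $K\subset\sigma(\mathcal L_{per})\setminus\partial$ avoids the band edges, the Floquet multipliers are non-degenerate and the Bloch solutions are quasi-periodic with bounded moduli, so $\psi_{\pm}(\cdot,\lambda)$ and $1/W(\lambda)$ are bounded uniformly on $K$. This is the main (and essentially only) obstacle; once it is in place, all the norm estimates above are uniform in $\lambda\in K$, which gives $\{R_n^{(1)}(\lambda)\}_{n=1}^\infty\in l^1(K)$ as claimed.
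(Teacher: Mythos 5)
Your proof is correct and follows essentially the same route as the paper: truncate the Neumann series for the one-period fundamental matrix after the linear term, estimate the quadratic remainder using the uniform bounds on the Bloch solutions and on $1/W$, then replace $\frac{1}{t+1}$ by $\frac{1}{an}$ in the Wigner--von Neumann part and absorb the $O(n^{-2})$ discrepancy into $R_n^{(1)}$. The paper phrases the truncation as $\Phi = (I-\mathcal V_n)^{-1}I = I + \mathcal V_n I + \mathcal V_n^2(I-\mathcal V_n)^{-1}I$ rather than summing the whole series and bounding the tail, but these are the same estimate. One small point in your favour: you explicitly observe that the $q_1$-part of $\int_{a(n-1)}^{an} L\,dt$ must also be pushed into $R_n^{(1)}$ (it is $O(\int_{a(n-1)}^{an}|q_1|)$, hence uniformly summable); the paper's sentence ``the total error term $R_n^{(1)}$ is the sum of $R_n^{(2)}$ and the expression \eqref{second error term}'' quietly omits this piece, even though it is of the same trivial nature.
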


Note that it follows that for every $n\in\mathbb N$ the matrix $R_n^{(1)}(\lambda)$ depends continuously on $\lambda\in\sigma(\mathcal L_{per})\backslash\partial$.

\begin{proof}
We give only a sketch of the proof, because it is rather standard. Fundamental matrix for \eqref{system for eta} satisfies the integral equation
\begin{equation}\label{equation for Phi}
    \Phi(x_0,x,\lambda)=I+\int_{x_0}^xL(t,\lambda)\Phi(x_0,t,\lambda)dt.
\end{equation}
To study $M_n(\lambda)$ put $x_0=a(n-1)$ and consider Volterra integral operator $\mathcal V_n(\lambda)$ in Banach space $C([0;a],M^{2,2})$ (of $2\times2$ matrix functions, with any matrix
norm) defined by the rule
\begin{equation*}
    \mathcal V_n(\lambda):u(x)\mapsto\int_0^xL(a(n-1)+t,\lambda)u(t)dt.
\end{equation*}
Using Neumann series and direct estimate of the norm we can show the existence of $(I-\mathcal V_n)^{-1}$ and the following estimate for its norm:
\begin{equation}\label{estimate of (I-V)^(-1)}
    \|(I-\mathcal V_n(\lambda))^{-1}\|\le\exp\l(\int_{a(n-1)}^{an}\|L(t,\lambda)\|\, dt\r).
\end{equation}
Returning to \eqref{equation for Phi} we can write the matrix equality
\begin{multline*}
    \Phi(a(n-1),a(n-1)+x,\lambda)=((I-\mathcal V_n(\lambda))^{-1}I)(x)=
    \\=I+(\mathcal V_n(\lambda)I)(x)+(\mathcal V_n^2(\lambda)(I-\mathcal V_n(\lambda))^{-1}I)(x),
\end{multline*}
and so putting $x=a$ we have:
\begin{equation*}
    M_n(\lambda)=I+\int_{a(n-1)}^{an}L(t,\lambda)dt+
    \underbrace{(\mathcal V_n^2(\lambda)(I-\mathcal V_n(\lambda))^{-1}I)(a)}_{=:R_n^{(2)}(\lambda)}.
\end{equation*}
The remainder $R^{(2)}_n(\cdot)$ is continuous in $\sigma(\mathcal L_{per})\backslash\partial$ for every $n$ and satisfies the uniform estimate
\begin{equation*}
    R_n^{(2)}(\lambda)=O\left(\l(\int_{a(n-1)}^{an}\|L(t,\lambda)\|\,dt\r)^2\right)\text{ as }n\rightarrow\infty.
\end{equation*}
Fix the compact set $K\subset\sigma(\mathcal L_{per})\backslash\partial$. One can always choose Bloch solutions so that their Wronskian is analytic functions and does not vanish on $\sigma(\mathcal L_{per})$. From the properties of Bloch solutions it follows that there exists $c_1(K)$ such that for every $\lambda\in K$ and $x$,
\begin{equation*}
    \l\|
    \frac1{W\{\psi_+(\lambda),\psi_-(\lambda)\}}
    \left(%
    \begin{array}{l}
    -\psi_+(x,\lambda)\psi_-(x,\lambda)\ \  -\psi_+^2(x,\lambda) \\
    \ \ \psi_-^2(x,\lambda)\ \ \ \ \ \ \psi_+(x,\lambda)\psi_-(x,\lambda) \\
    \end{array}%
    \right)
    \r\|
    \le c_1(K).
\end{equation*}
A straightforward estimate yields:
\begin{equation*}
    \int_{a(n-1)}^{an}\|L(t,\lambda)\|\,dt
    \le
    c_1(K)\left(|c|\ln\l(\frac{an+1}{an+1-a}\r)+\int_{a(n-1)}^{an}|q_1(t)|dt\right),
\end{equation*}
and thus $\{R_n^{(2)}(\lambda)\}_{n=1}^{\infty}\in l^1(K)$. Now consider the expression for
$\int_{a(n-1)}^{an}L(t,\lambda)dt$. Fix the parameter $t$ in the denominator, putting it equal to $an$ (this gives precisely the expression for the integral in the assertion of the lemma). The difference is:
\begin{equation}\label{second error term}
    \int_{a(n-1)}^{an}
    \frac{c\sin(2\omega t+\delta)}{W\{\psi_+(\lambda),\psi_-(\lambda)\}}
    \l(\frac1{t+1}-\frac1{an}\r)
    \left(%
    \begin{array}{cc}
    -\psi_+(t,\lambda)\psi_-(t,\lambda) & -\psi_+^2(t,\lambda) \\
    \psi_-^2(t,\lambda) & \psi_+(t,\lambda)\psi_-(t,\lambda) \\
    \end{array}%
    \right)dt,
\end{equation}
which admits an estimate by $\frac{c_1(K)|c|(a+1)}{n(an+1-a)}$. The total error term $R^{(1)}_n$ is the sum of $R^{(2)}_n$ and the expression \eqref{second error term}, and therefore it possesses the required properties. This completes the proof.
\end{proof}

\subsection*{Step 2. Fourier decomposition of Bloch solutions}
Rewrite the expression from the second term of the formula for $M_n(\lambda)$ given by Lemma \ref{lem M-n(lambda)},
\begin{equation}\label{integral - oscillating term of M-n}
    \frac1a\int_{a(n-1)}^{an}
    \frac{c\sin(2\omega t+\delta)}{W\{\psi_+(\lambda),\psi_-(\lambda)\}}
    \left(%
    \begin{array}{cc}
    -\psi_+(t,\lambda)\psi_-(t,\lambda) & -\psi_+^2(t,\lambda) \\
    \psi_-^2(t,\lambda) & \psi_+(t,\lambda)\psi_-(t,\lambda) \\
    \end{array}%
    \right)dt,
\end{equation}
using Fourier decompositions for periodic parts of Bloch solutions. Let $\{b_l^+(\lambda)\}_{l=-\infty}^{\infty}$, $\{b_l^-(\lambda)\}_{l=-\infty}^{\infty}$ and $\{b_l(\lambda)\}_{l=-\infty}^{\infty}$ be Fourier coefficients defined by identities
\begin{equation}\label{Fourier decomposition of Bloch solutions}
    \psi_+(x,\lambda)\psi_-(x,\lambda)\equiv\sum\limits_{l=-\infty}^{+\infty}b_l(\lambda)e^{2i\pi l\frac xa},
    \
    \psi_{\pm}^2(x,\lambda)\equiv\sum\limits_{l=-\infty}^{+\infty}b^{\pm}_l(\lambda)e^{2i(\pi l\pm k(\lambda))\frac xa}.
\end{equation}
These Fourier coefficients and their derivatives with respect to $\lambda$ are locally uniformly bounded in $\sigma(\mathcal
L_{per})\backslash\partial$ and obey the locally uniform (on the same set) estimate $O(1/l^2)$ as $l\rightarrow\infty$. This fact is rather standard, see, e.g., \cite{Kurasov-Simonov-2011} for the details. Since $\psi_+$ and $\psi_-$ are complex conjugate on $\sigma(\mathcal
L_{per})\backslash\partial$, one has:
\begin{equation*}
    b^+_l(\lambda)=\overline{b^-_{-l}(\lambda)},
    \
    b_l(\lambda)=\overline{b_{-l}(\lambda)}\text{ for every }l\in\mathbb Z\text{ and }\lambda\in\sigma(\mathcal
    L_{per})\backslash\partial.
\end{equation*}
Substituting identities \eqref{Fourier decomposition of Bloch solutions} into the expression \eqref{integral - oscillating term of M-n} and changing the order of summation and integration (which is possible due to the properties of Fourier coefficients mentioned above), one obtains the following result.
Expression \eqref{integral - oscillating term of M-n} equals to
\linebreak
$\left(%
\begin{array}{cc}
\beta_n^{(d)}(\lambda) & \beta_n^{(ad)}(\lambda) \\
\overline{\beta_n^{(ad)}}(\lambda) & -\beta_n^{(d)}(\lambda) \\
\end{array}%
\right)$,
and the monodromy matrix can be written in the form
\begin{equation}\label{M-n final}
    M_n(\lambda)=I+\frac1n
    \left(%
    \begin{array}{cc}
    \beta_n^{(d)}(\lambda) & \beta_n^{(ad)}(\lambda) \\
    \overline{\beta_n^{(ad)}}(\lambda) & -\beta_n^{(d)}(\lambda) \\
    \end{array}%
    \right)
    +R_n^{(1)}(\lambda).
\end{equation}
Here we have introduced the following notations:
\begin{equation}\label{beta-0,+,-}
    \begin{array}{l}
    \beta_0(\lambda):=-\frac{ce^{i(\delta-a\omega)}}{2iW\{\psi_+(\lambda),\psi_-(\lambda)\}}\sum\limits_{l=-\infty}^{+\infty}b_l(\lambda)
    \frac{\sin(a\omega)}{\pi l+a\omega},
    \\
    \beta_{\pm}(\lambda):=\mp\frac{ce^{i(\delta-(k(\lambda)\pm a\omega))}}{2iW\{\psi_+(\lambda),\psi_-(\lambda)\}}
    \sum\limits_{l=-\infty}^{+\infty}b^+_l(\lambda)\frac{\sin(k(\lambda)\pm a\omega)}{\pi l+k(\lambda)\pm a\omega},
    \end{array}
\end{equation}
and
\begin{equation}\label{beta-d,ad}
    \begin{array}{l}
    \beta_n^{(d)}(\lambda):=\beta_0(\lambda)e^{2ia\omega n}-\overline{\beta_0(\lambda)}e^{-2ia\omega n},
    \\
    \beta_n^{(ad)}(\lambda):=\beta_+(\lambda)e^{2i(k(\lambda)+a\omega)n}+\beta_-(\lambda)e^{2i(k(\lambda)-a\omega)n}.
    \end{array}
\end{equation}

\subsection*{Step 3. Elimination of non-resonant terms by Harris-Lutz method}
For every fixed $\lambda\in\sigma(\mathcal L_{per})\backslash\partial$ the entries of the matrix
$\left(%
\begin{array}{cc}
\beta_n^{(d)}(\lambda) & \beta_n^{(ad)}(\lambda) \\
\overline{\beta_n^{(ad)}}(\lambda) & -\beta_n^{(d)}(\lambda) \\
\end{array}%
\right)$
are linear combinations of four exponential terms: $e^{2ia\omega n}$, $e^{-2ia\omega n}$, $e^{2i(k(\lambda)+a\omega)n}$ and $e^{2i(k(\lambda)-a\omega)n}$. The first two of them do not depend on $\lambda$ and since $\omega\notin\frac{\pi\mathbb Z}{2a}$ they do oscillate. The third and fourth terms do depend on $\lambda$. They oscillate, if $k(\lambda)+a\omega\notin\pi\mathbb Z$ and $k(\lambda)-a\omega\notin\pi\mathbb Z$, respectively, otherwise they are constant (we call resonance this matching of the quasi-momentum $k(\lambda)$ and the frequency $\omega$). Oscillating exponential terms can be dropped with the help of Harris-Lutz transformation and do not affect the asymptotical type of solutions of the system $w_{n+1}=M_n(\lambda)w_n$. Therefore (see the rigorous proof of this fact below) if $\lambda$ is not one of the resonance points $\nu_{j,\pm}$, which are defined by conditions
\begin{equation*}
    k(\nu_{j,+}):=\pi\left(j+1-\left\{\frac{a\omega}{\pi}\right\}\right),
    \
    k(\nu_{j,-}):=\pi\left(j+\left\{\frac{a\omega}{\pi}\right\}\right),
    \
    j\ge0,
\end{equation*}
then every solution of the system $w_{n+1}=M_n(\lambda)w_n$ has a limit as $n\rightarrow\infty$. Note that due to the condition $\omega\notin\frac{\pi\mathbb Z}{2a}$, for every $j\ge0$ resonance points $\nu_{j,+}\neq\nu_{j,-}$ do not coincide with the endpoints of the $j$-th spectral band.

Pick a spectral band and one of the two critical points in this band, e.g., $\nu_{j,+}$ for some index $j$. To simplify the notation, we write $\nu_{cr}:=\nu_{j,+}$. Consider an open neighborhood $U_{cr}$ of the point $\nu_{cr}$ such that it contains neither the critical point $\nu_{j,-}$ nor the endpoints of the band $\mu_j$ and $\lambda_j$. In what follows we assume that $\nu_{cr}$ and $U_{cr}$ are fixed and we drop the indices $j,+$ in most cases. To remove the oscillating terms from the system we need the following elementary lemma belonging to the domain of the mathematical folklore.

    \begin{lem}\label{lem estimate of sum for Harris-Lutz}
    For every real $\xi\notin2\pi\mathbb Z$ and $n\in\mathbb N$,
    \begin{equation*}
        \l|\sum_{m=n}^{\infty}\frac{e^{im\xi}}m\r|\le\frac1{n\l|\sin\frac{\xi}2\r|}.
    \end{equation*}
    \end{lem}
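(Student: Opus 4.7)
My plan is to prove this by Abel summation (summation by parts), which is the natural tool whenever one has a series $\sum a_m b_m$ in which the factor $a_m = e^{im\xi}$ has uniformly bounded partial sums while $b_m = 1/m$ is monotone and tends to zero. The key preliminary observation is that for the geometric partial sums
\[
s_M := \sum_{m=n}^{M} e^{im\xi} = e^{in\xi}\,\frac{1-e^{i(M-n+1)\xi}}{1-e^{i\xi}},
\]
the elementary estimate $|1-e^{i\xi}| = 2|\sin(\xi/2)|$ gives the uniform bound $|s_M| \le 1/|\sin(\xi/2)|$ for all $M \ge n-1$ (with the convention $s_{n-1}=0$). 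This is where the hypothesis $\xi \notin 2\pi\mathbb{Z}$ enters: it prevents the denominator from vanishing.

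Next I would carry out the Abel transformation: writing $e^{im\xi} = s_m - s_{m-1}$ and reindexing,
\[
\sum_{m=n}^{M}\frac{e^{im\xi}}{m} \;=\; \frac{s_M}{M} \;+\; \sum_{m=n}^{M-1} s_m\l(\frac{1}{m}-\frac{1}{m+1}\r).
\]
Because $|s_M| \le 1/|\sin(\xi/2)|$ is bounded while $1/M \to 0$, the boundary term vanishes as $M \to \infty$. Absolute convergence of the remaining series follows from the monotone factors $1/m-1/(m+1) > 0$, which allows me to pull the uniform bound on $|s_m|$ outside and telescope:
\[
\l|\sum_{m=n}^{\infty}\frac{e^{im\xi}}{m}\r|
\;\le\; \frac{1}{|\sin(\xi/2)|}\sum_{m=n}^{\infty}\l(\frac{1}{m}-\frac{1}{m+1}\r)
\;=\; \frac{1}{n\,|\sin(\xi/2)|},
\]
which is exactly the claimed inequality.

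There is no real obstacle here; the lemma is a folklore Dirichlet-type estimate, and the only delicate point is bookkeeping the index shift in the Abel transform so that $s_{n-1}=0$ and no stray boundary term appears at the lower end. The proof uses nothing beyond the geometric series formula and a telescoping sum.
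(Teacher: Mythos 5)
Your proof is correct, and it is essentially the same summation-by-parts idea as the paper's: both exploit the bounded oscillation of $\sum e^{im\xi}$ together with the telescoping of $\frac1m-\frac1{m+1}$, with the factor $|\sin(\xi/2)|$ entering through $|1-e^{i\xi}|=2|\sin(\xi/2)|$. The paper's version is a touch more compact — it multiplies the whole series by $(e^{i\xi}-1)$ so that $e^{im\xi}(e^{i\xi}-1)$ itself becomes the telescoping increment, avoiding the explicit introduction of the partial sums $s_M$ — but the content and the bound obtained are identical to yours.
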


\begin{proof}
\begin{multline*}
    \left|(e^{i\xi}-1)\sum_{m=n}^{\infty}\frac{e^{im\xi}}m\right|
    =\left|\sum_{m=n}^{\infty}e^{i(m+1)\xi}\l(\frac1m-\frac1{m+1}\r)-\frac{e^{in\xi}}n\right|
    \\
    \le\sum_{m=n}^{\infty}\l(\frac1m-\frac1{m+1}\r)+\frac1n=\frac2n.
\end{multline*}
Since $|e^{i\xi}-1|=2\l|\sin\frac{\xi}2\r|$, this argument completes the proof.
\end{proof}

By a Harris-Lutz transformation uniform in $U_{cr}$ it is possible to eliminate non-res-
\linebreak
onating exponential terms and to stabilize coefficients at resonating terms (i.e., make them independent of $\lambda$). Doing this we can provide for the uniform summability of the remainder. We formulate this argument in the following lemma.

    \begin{lem}\label{lem properties of Harris-Lutz transformation}
    There exists a sequence of matrices $\{Q^{(1)}_n(\lambda)\}_{n=1}^{\infty}$ defined in $U_{cr}$ such that $Q^{(1)}_n(\lambda)=O\l(\frac1n\r)$ as $n\rightarrow\infty$ uniformly in $U_{cr}$ and
    \begin{multline}\label{uniform Harris-Lutz}
        \exp\Bigl(-Q^{(1)}_{n+1}(\lambda)\Bigr)M_n(\lambda)\exp\Bigl(Q^{(1)}_n(\lambda)\Bigr)
        \\
        =I+\frac1n
        \left(%
        \begin{array}{cc}
        0 & \beta_+(\nu_{cr})e^{2i(k(\lambda)+a\omega)n} \\
        \overline{\beta_+(\nu_{cr})}e^{-2i(k(\lambda)+a\omega)n} & 0 \\
        \end{array}%
        \right)
        +R^{(3)}_n(\lambda)
    \end{multline}
    with some $\{R^{(3)}_n(\lambda)\}_{n=1}^{\infty}\in l^1(U_{cr})$ such that $R^{(3)}_n(\cdot)$ is continuous in $U_{cr}$ for every $n$.
    \end{lem}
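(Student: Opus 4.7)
I would follow the standard Harris--Lutz prescription, adjusted so as to be uniform in $\lambda\in U_{cr}$ and to freeze the coefficient of the surviving resonant exponential at its value at $\nu_{cr}$. Write the oscillatory middle term of \eqref{M-n final} as
\begin{equation*}
    \frac1n\l(
    \begin{array}{cc}
    \beta_n^{(d)}(\lambda) & \beta_n^{(ad)}(\lambda) \\
    \overline{\beta_n^{(ad)}(\lambda)} & -\beta_n^{(d)}(\lambda)
    \end{array}
    \r)
    =\frac{B_n(\lambda)}n+\frac{N_n(\lambda)}n,
\end{equation*}
where $B_n(\lambda)$ is the resonant matrix from the right-hand side of \eqref{uniform Harris-Lutz} (carrying only $\beta_+(\nu_{cr})e^{2i(k(\lambda)+a\omega)n}$ and its conjugate), and $N_n(\lambda)$ collects all the remaining exponential terms from \eqref{beta-d,ad}: the diagonal pieces $\pm\beta_0(\lambda)e^{\pm2ia\omega n}$, the off-diagonal term $\beta_-(\lambda)e^{2i(k(\lambda)-a\omega)n}$ with its conjugate, and the residual resonant piece $(\beta_+(\lambda)-\beta_+(\nu_{cr}))e^{2i(k(\lambda)+a\omega)n}$ with its conjugate. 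Set
\begin{equation*}
    Q^{(1)}_n(\lambda):=-\sum_{m=n}^\infty\frac{N_m(\lambda)}m,
\end{equation*}
so that the telescoping identity $Q^{(1)}_{n+1}-Q^{(1)}_n=N_n/n$ holds.

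The first task is the uniform estimate $Q^{(1)}_n(\lambda)=O(1/n)$ in $U_{cr}$. Each entry of $N_m/m$ is a linear combination of terms $\gamma(\lambda)e^{im\xi(\lambda)}/m$, and Lemma~\ref{lem estimate of sum for Harris-Lutz} bounds the tail by $1/(n|\sin(\xi(\lambda)/2)|)$. For the three non-resonant frequencies $\xi=\pm 2a\omega$ and $\xi=\pm 2(k(\lambda)-a\omega)$, the assumption $\omega\notin\frac{\pi\mathbb Z}{2a}$ together with the explicit value of $k(\nu_{cr})$ guarantees that $|\sin(\xi(\lambda)/2)|$ is bounded below by a positive constant on $U_{cr}$ (possibly after shrinking $U_{cr}$). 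The delicate case is the residual resonant term with $\xi(\lambda)=2(k(\lambda)+a\omega)$: since $k(\nu_{cr})+a\omega\in\pi\mathbb Z$, one has $|\sin(\xi(\lambda)/2)|=|\sin(k(\lambda)-k(\nu_{cr}))|$, which vanishes at $\nu_{cr}$. However the coefficient $\gamma(\lambda)=\beta_+(\lambda)-\beta_+(\nu_{cr})$ vanishes there too, and, since $\beta_+$ is analytic on $U_{cr}$ with $k'(\nu_{cr})\ne 0$ (the critical point lies in the interior of a spectral band), the ratio $|\gamma(\lambda)|/|\sin(\xi(\lambda)/2)|$ is uniformly bounded. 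Hence Lemma~\ref{lem estimate of sum for Harris-Lutz} again yields a uniform $O(1/n)$ bound on this contribution, and continuity of $Q^{(1)}_n(\cdot)$ on $U_{cr}$ follows from uniform convergence of the tail.

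With this estimate in hand, the conjugation is a routine expansion. Writing $\exp(\pm Q^{(1)}_n)=I\pm Q^{(1)}_n+O(1/n^2)$ and inserting \eqref{M-n final}, I obtain
\begin{equation*}
    \exp(-Q^{(1)}_{n+1})M_n\exp(Q^{(1)}_n)=I+\frac{B_n(\lambda)+N_n(\lambda)}n+(Q^{(1)}_n-Q^{(1)}_{n+1})+R^{(1)}_n(\lambda)+E_n(\lambda),
\end{equation*}
where $E_n$ collects the products of two or more $O(1/n)$ pieces arising from the expansion of the three exponentials and the factor $M_n-I$. Uniformly on $U_{cr}$ each such product is $O(1/n^2)$, whence $\{E_n\}\in l^1(U_{cr})$; moreover $R^{(1)}_n\in l^1(U_{cr})$ by Lemma~\ref{lem M-n(lambda)} applied to a compact neighbourhood containing $\overline{U_{cr}}$. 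By construction $N_n/n+(Q^{(1)}_n-Q^{(1)}_{n+1})=0$, so only $B_n/n$ and the remainder $R^{(3)}_n:=R^{(1)}_n+E_n$ survive, which is exactly \eqref{uniform Harris-Lutz}; continuity of $R^{(3)}_n(\cdot)$ in $U_{cr}$ is inherited from its constituents.

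The main difficulty is precisely the residual resonant term: its frequency can be arbitrarily close to a multiple of $2\pi$ as $\lambda$ approaches $\nu_{cr}$, so without the compensating vanishing of $\beta_+(\lambda)-\beta_+(\nu_{cr})$ the naive Harris--Lutz transformation would fail to be uniformly bounded on $U_{cr}$ and the remainder would not be uniformly summable. The interplay between this vanishing, the non-degeneracy $k'(\nu_{cr})\ne 0$, and Lemma~\ref{lem estimate of sum for Harris-Lutz} is what makes the lemma genuinely uniform in the spectral parameter and explains why the coefficient on the right-hand side of \eqref{uniform Harris-Lutz} must be evaluated at $\nu_{cr}$.
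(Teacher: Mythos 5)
Your proposal is correct and follows essentially the same route as the paper: you define $Q^{(1)}_n$ as the negated tail sum of the non-resonant exponential terms (including the residual resonant piece with coefficient $\beta_+(\lambda)-\beta_+(\nu_{cr})$), invoke Lemma~\ref{lem estimate of sum for Harris-Lutz} entry-wise, use the cancellation between the simple zero of $\sin(k(\lambda)+a\omega)$ and the vanishing of $\beta_+(\lambda)-\beta_+(\nu_{cr})$ at $\nu_{cr}$ to get the uniform $O(1/n)$ bound, and then expand the conjugating exponentials and absorb $O(1/n^2)$ into the remainder. The only cosmetic difference is that you cite analyticity of $\beta_+$ where the paper relies only on $C^1$-regularity of the Fourier coefficients of the Bloch solutions (which is what it establishes and all that is needed).
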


\begin{proof}
We follow the scheme of \cite{Benzaid-Lutz-1987} and need to take care of the uniformity only.  The explicit formula for $Q^{(1)}_n$ is
\begin{equation}\label{Q-1}
    Q^{(1)}_n(\lambda):=
    -\sum_{m=n}^{\infty}\frac1m
    \left(
      \begin{array}{c}
        \quad \beta_m^{(d)}(\lambda)\quad\quad  \beta_m^{(ad)}(\lambda)-\beta_+(\nu_{cr})e^{2i(k(\lambda)+a\omega)m} \\
        \overline{\beta_m^{(ad)}(\lambda)}-\overline{\beta_+(\nu_{cr})}e^{-2i(k(\lambda)+a\omega)m}\quad\quad -\beta_m^{(d)}(\lambda) \\
      \end{array}
    \right).
\end{equation}
Lemma \ref{lem estimate of sum for Harris-Lutz} yields immediately the estimates
\begin{equation*}
    \l|\sum_{m=n}^{\infty}\frac{\beta_m^{(d)}(\lambda)}m\r|\le\frac{2|\beta_0(\lambda)|}{n|\sin a\omega|}
\end{equation*}
and
\begin{equation*}
    \l|\sum_{m=n}^{\infty}\frac{\beta_m^{(ad)}(\lambda)-\beta_+(\nu_{cr})e^{2i(k(\lambda)+a\omega)}}m\r|
    \le
    \frac{|\beta_+(\lambda)-\beta_+(\nu_{cr})|}{n|\sin(k(\lambda)+a\omega)|}
    +
    \frac{|\beta_-(\lambda)|}{n|\sin(k(\lambda)-a\omega)|}.
\end{equation*}
Functions $\beta_0$ and $\beta_-$ are continuous and the denominators $\sin a\omega$ and $\sin(k(\lambda)-a\omega)$ are separated from zero in $U_{cr}$. On the other hand, the denominator $\sin(k(\lambda)+a\omega)$ has the only zero at the point $\nu_{cr}$, which is simple and thus is compensated by the zero of the numerator. Therefore the estimate $Q^{(1)}_n(\lambda)=O\l(\frac1n\r)$ holds and is uniform in $U_{cr}$. Using the obvious property
\begin{equation*}
    Q^{(1)}_{n+1}(\lambda)-Q^{(1)}_n(\lambda)=\frac1n
    \left(%
    \begin{array}{l}
      \ \ \ \ \ \ \ \ \
      \beta_n^{(d)}(\lambda)
      \ \ \ \ \ \ \ \ \
      \beta_n^{(ad)}(\lambda)-\beta_+(\nu_{cr})e^{2i(k(\lambda)+a\omega)n}
      \\
      \overline{\beta_n^{(ad)}(\lambda)}-\overline{\beta_+(\nu_{cr})}e^{-2i(k(\lambda)+a\omega)n}
      \ \ \ \ \ \ \ \
      -\beta_n^{(d)}(\lambda) \\
    \end{array}%
    \right),
\end{equation*}
one obtains:
\begin{multline*}
    M_n(\lambda)=I+\frac1n
        \left(%
    \begin{array}{cc}
    0 & \beta_+(\nu_{cr})e^{2i(k(\lambda)+a\omega)n} \\
    \overline{\beta_+(\nu_{cr})}e^{-2i(k(\lambda)+a\omega)n} & 0 \\
    \end{array}%
    \right)
    \\
    +Q^{(1)}_{n+1}(\lambda)-Q^{(1)}_n(\lambda)+R^{(1)}_n(\lambda).
\end{multline*}
Multiplying by $\exp(Q^{(1)}_n(\lambda))$ from the right; by $\exp(-Q^{(1)}_{n+1}(\lambda))$ form the left, expanding exponents and absorbing the terms of the order $1/n^2$ in the remainder, we have:
\begin{multline*}
    \exp\Bigl(-Q^{(1)}_{n+1}(\lambda)\Bigr)M_n(\lambda)\exp\Bigl(Q^{(1)}_n(\lambda)\Bigr)
    \\
    =I+\frac1n
    \left(%
    \begin{array}{cc}
    0 & \beta_+(\nu_{cr})e^{2i(k(\lambda)+a\omega)n} \\
    \overline{\beta_+(\nu_{cr})}e^{-2i(k(\lambda)+a\omega)n} & 0 \\
    \end{array}%
    \right)
    +\underbrace{R_n^{(1)}(\lambda)+O\l(\frac1{n^2}\r)}_{=R^{(3)}_n(\lambda)},
\end{multline*}
where the estimate $O\l(\frac1{n^2}\r)$ is uniform in $U_{cr}$. It is clear that $R^{(3)}_n(\lambda)$ is
continuous in $U_{cr}$ for every $n$. This completes the proof.
\end{proof}

\subsection*{Step 4. Reduction to the model problem}
After the Harris-Lutz transformation $\{w_n\}_{n=1}^{\infty}\mapsto\{\widetilde w_n\}_{n=1}^{\infty}$ with
\linebreak
$\widetilde w_n:=\exp(-Q^{(1)}_n(\lambda))w_n$ the system $w_{n+1}=M_n(\lambda)w_n$ is reduced to the system
\begin{equation*}
    \widetilde w_{n+1}=
    \left[
    I+\frac1n
    \left(%
    \begin{array}{cc}
    0 & \beta_+(\nu_{cr})e^{2i(k(\lambda)+a\omega)n} \\
    \overline{\beta_+(\nu_{cr})}e^{-2i(k(\lambda)+a\omega)n} & 0 \\
    \end{array}%
    \right)
    +R^{(3)}_n(\lambda)
    \right]
    \widetilde w_n.
\end{equation*}
At the critical point it takes the following form:
\begin{equation*}
    \widetilde w_{n+1}=
    \left[
    I+\frac1n
    \left(%
    \begin{array}{cc}
    0 & \beta_+(\nu_{cr})\\
    \overline{\beta_+(\nu_{cr})} & 0 \\
    \end{array}%
    \right)
    +R_n^{(3)}(\nu_{cr})
    \right]
    \widetilde w_n.
\end{equation*}
It is convenient to diagonalize the constant matrix in the second term. One has:
\begin{multline*}
    \left(%
        \begin{array}{cc}
        e^{\frac i2\arg \beta_+(\nu_{cr})} & ie^{\frac i2\arg \beta_+(\nu_{cr})} \\
        e^{-\frac i2\arg \beta_+(\nu_{cr})} & -ie^{-\frac i2\arg \beta_+(\nu_{cr})} \\
        \end{array}%
    \right)^{-1}
    \left(%
    \begin{array}{cc}
    0 & \beta_+(\nu_{cr})\\
    \overline{\beta_+(\nu_{cr})} & 0 \\
    \end{array}%
    \right)
    \\
    \times
    \left(%
        \begin{array}{cc}
        e^{\frac i2\arg \beta_+(\nu_{cr})} & ie^{\frac i2\arg \beta_+(\nu_{cr})} \\
        e^{-\frac i2\arg \beta_+(\nu_{cr})} & -ie^{-\frac i2\arg \beta_+(\nu_{cr})} \\
        \end{array}%
    \right)
    =|\beta_+(\nu_{cr})|
    \left(
      \begin{array}{cc}
        1 & 0 \\
        0 & -1 \\
      \end{array}
    \right).
\end{multline*}
Consider the following sequence $\{v_n\}_{n=1}^{\infty}$ instead of $\{\widetilde w_n\}_{n=1}^{\infty}$:
\begin{equation*}
    v_n:=
    \left(%
        \begin{array}{cc}
        e^{\frac i2\arg \beta_+(\nu_{cr})} & ie^{\frac i2\arg \beta_+(\nu_{cr})} \\
        e^{-\frac i2\arg \beta_+(\nu_{cr})} & -ie^{-\frac i2\arg \beta_+(\nu_{cr})} \\
        \end{array}%
    \right)
    ^{-1}
    \widetilde w_n.
\end{equation*}
Due to \eqref{uniform Harris-Lutz} the system $w_{n+1}=M_n(\lambda)w_n$ is equivalent to:
\begin{equation}\label{system for v}
    v_{n+1}=
    \l[
    I+\frac{\beta}n\left(%
    \begin{array}{cc}
    \cos(2(k(\lambda)+a\omega)n) & \sin(2(k(\lambda)+a\omega)n) \\
    \sin(2(k(\lambda)+a\omega)n) & -\cos(2(k(\lambda)+a\omega)n) \\
    \end{array}%
    \right)
    +R^{(4)}_n(\lambda)
    \r]
    v_n,
\end{equation}
where the sequence $\{R^{(4)}_n(\lambda)\}_{n=1}^{\infty}\in l^1(U_{cr})$, the functions $R^{(4)}_n(\cdot)$ are continuous in $U_{cr}$ and
\begin{equation*}
    \beta:=|\beta_+(\nu_{cr})|
    =\frac{|c|}{2a|W\{\psi_+(\nu_{cr}),\psi_-(\nu_{cr})\}|}
    \left|
    \int_0^{a}\psi_+^2(t,\nu_{cr})e^{2i\omega t}dt
    \right|.
\end{equation*}
Replace the parameter $\lambda$ on the set $U_{cr}$ by the new small parameter
\begin{equation}\label{epsilon}
    \varepsilon:=2(k(\lambda)-k(\nu_{cr}))=2(k(\lambda)+a\omega)-2\pi\left(j+1+\left\lfloor\frac{a\omega}{\pi}\right\rfloor\right),
\end{equation}
where $\lfloor\cdot\rfloor$ is the standard floor function. The set of values taken by $\varepsilon$ is $U:=\{2(k(\lambda)-k(\nu_{cr})),\lambda\in U_{cr}\}$. By the property of the quasi-momentum (that $k(\lambda)\in[\pi j;\pi(j+1)]$ in $j$-th spectral band) and the condition $\omega\notin\frac{\pi\mathbb Z}{2a}$, which guarantees that the critical point is in the interior of the spectral band, we have $\overline U\subset (-2\pi;2\pi)$. Denote $R_n(\varepsilon)=R^{(4)}_n(\lambda)$ for $\lambda$ corresponding to $\varepsilon$ according to \eqref{epsilon}. System \eqref{system for v} then reads:
\begin{equation}\label{system for v with epsilon}
    v_{n+1}=
    \l[
    I+\frac{\beta}n\left(%
    \begin{array}{cc}
    \cos(\varepsilon n) & \sin(\varepsilon n) \\
    \sin(\varepsilon n) & -\cos(\varepsilon n) \\
    \end{array}%
    \right)
    +R_n(\varepsilon)
    \r]
    v_n,
\end{equation}

The aim of this section is to rewrite the Weyl-Titchmarsh type formula in terms of the solutions of the system \eqref{system for v with epsilon}. Proposition \ref{prop Weyl-Titchmarsh type formula} deals with the solution $\varphi_{\alpha}$ of the spectral equation for the operator $\mathcal L_{\alpha}$. Combining all the transformations described above and denoting the result by $\Xi:\psi(x)\mapsto\{v_n\}_{n=1}^{\infty}$, one ends up with the following model image of the initial solution $\varphi_{\alpha}$:
\begin{multline}\label{v-alpha}
    v_{\alpha,n}(\varepsilon)=v_{\alpha,n}(2(k(\lambda)-k(\nu_{cr}))):=
    \left(%
        \begin{array}{cc}
        e^{\frac i2\arg \beta_+(\nu_{cr})} & ie^{\frac i2\arg \beta_+(\nu_{cr})} \\
        e^{-\frac i2\arg \beta_+(\nu_{cr})} & -ie^{-\frac i2\arg \beta_+(\nu_{cr})} \\
        \end{array}%
    \right)
    ^{-1}
    \\
    \times
    \exp\bigl(-Q^{(1)}_n(\lambda)\bigr)
    \left(%
    \begin{array}{cc}
    \psi_-(a(n-1),\lambda) & \psi_+(a(n-1),\lambda) \\
    \psi_-'(a(n-1),\lambda) & \psi_+'(a(n-1),\lambda) \\
    \end{array}%
    \right)^{-1}
    \left(%
    \begin{array}{c}
    \varphi_{\alpha}(a(n-1),\lambda) \\
    \varphi_{\alpha}'(a(n-1),\lambda) \\
    \end{array}%
    \right).
\end{multline}
This is obviously a solution of the system \eqref{system for v with epsilon} and it is continuous in $U$ for every $n$.

    \begin{lem}\label{lem Weyl-Titchmarsh formula}
    For every $\varepsilon\in U\backslash\{0\}$ there exists a limit $\lim\limits_{\ninf}v_{\alpha,n}(\varepsilon)\neq0$, which is continuous in $\varepsilon\in U\backslash\{0\}$ as a vector-valued function of $\varepsilon$. The spectral density of $\mathcal L_{\alpha}$ equals
    \begin{equation}\label{Weyl-Titchmarsh type formula v}
        \rho'_{\alpha}(\lambda)
        =\frac2{\pi|W\{\psi_+(\lambda),\psi_-(\lambda)\}|\l\|\lim\limits_{\ninf}v_{\alpha,n}(2(k(\lambda)-k(\nu_{cr})))\r\|^2}
        \text{ a.e. in }U_{cr}.
    \end{equation}
    \end{lem}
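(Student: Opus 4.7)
The plan is to apply Proposition \ref{prop Weyl-Titchmarsh type formula} for each $\lambda \in U_{cr} \setminus \{\nu_{cr}\}$ (which corresponds to $\varepsilon \in U \setminus \{0\}$ via \eqref{epsilon}) and then push the resulting asymptotics of $\varphi_\alpha$ through the chain of transformations $\Xi$ that was constructed step by step in this section. By the choice of $U_{cr}$, such $\lambda$ avoids the generalized boundary $\partial$ as well as both resonance points, so Proposition \ref{prop Weyl-Titchmarsh type formula} yields a nonzero coefficient $A_\alpha(\lambda)$ with
\begin{equation*}
\binom{\varphi_\alpha(x,\lambda)}{\varphi_\alpha'(x,\lambda)} = A_\alpha(\lambda)\binom{\psi_-(x,\lambda)}{\psi_-'(x,\lambda)} + \overline{A_\alpha(\lambda)}\binom{\psi_+(x,\lambda)}{\psi_+'(x,\lambda)} + \binom{o(1)}{o(1)}.
\end{equation*}

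First I would invert the variation of parameters. Multiplying by the inverse of the Bloch matrix, whose entries are uniformly bounded on compact subsets of $\sigma(\mathcal L_{per})\setminus\partial$ because the Wronskian $W\{\psi_+(\lambda),\psi_-(\lambda)\}$ is a non-vanishing analytic function there, one obtains $\eta(x,\lambda) \to (A_\alpha(\lambda),\overline{A_\alpha(\lambda)})^T$ as $x\to +\infty$, and consequently $w_n \to (A_\alpha(\lambda),\overline{A_\alpha(\lambda)})^T$. Next I would apply the Harris-Lutz transformation: since $Q^{(1)}_n(\lambda) = O(1/n)$ uniformly on $U_{cr}$ by Lemma \ref{lem properties of Harris-Lutz transformation}, the factor $\exp(-Q^{(1)}_n(\lambda))$ tends to the identity, so $\widetilde w_n$ has the same limit. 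Finally, multiplication by the constant (in $n$) diagonalizing matrix $C^{-1}$ from Step 4 produces the limit
\begin{equation*}
\lim_{\ninf} v_{\alpha,n}(\varepsilon) = C^{-1}\binom{A_\alpha(\lambda)}{\overline{A_\alpha(\lambda)}},
\end{equation*}
which is nonzero because $A_\alpha(\lambda)\ne 0$ and $C^{-1}$ is invertible. A short calculation using $CC^* = 2I$ relates $\|\lim v_{\alpha,n}\|^2$ to $|A_\alpha(\lambda)|^2$ by a constant factor independent of $\lambda$; substituting this relation into formula \eqref{Weyl-Titchmatsh type formula phi} of Proposition \ref{prop Weyl-Titchmarsh type formula} yields \eqref{Weyl-Titchmarsh type formula v}.

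For the continuity of $\lim_{\ninf} v_{\alpha,n}(\varepsilon)$ in $\varepsilon \in U\setminus\{0\}$, the most direct route is to show that the convergence $v_{\alpha,n}(\varepsilon) \to \lim v_{\alpha,n}(\varepsilon)$ is uniform on compact subsets of $U\setminus\{0\}$. Rewriting \eqref{system for v with epsilon} in telescoping form, $v_{\alpha,n+1} - v_{\alpha,n}$ consists of an oscillating term $\frac{\beta}{n}(\cdots) v_{\alpha,n}$ and a term involving $R_n(\varepsilon)$; the latter is uniformly summable by Lemma \ref{lem properties of Harris-Lutz transformation}, and the former can be handled by an additional non-uniform Harris-Lutz transformation on each compact subset of $U\setminus\{0\}$ (where $\sin(\varepsilon/2)$ is bounded away from zero, so Lemma \ref{lem estimate of sum for Harris-Lutz} applies with a uniform constant). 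Each $v_{\alpha,n}(\varepsilon)$ is continuous in $\varepsilon$ as a composition of continuous maps (this is the content of the continuity assertions tracked through Lemmas \ref{lem M-n(lambda)} and \ref{lem properties of Harris-Lutz transformation}), and uniform convergence of continuous functions gives continuity of the limit.

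The main obstacle I anticipate is keeping the uniformity under control near $\varepsilon = 0$, where the auxiliary Harris-Lutz transformation needed to eliminate the oscillating factor $e^{i\varepsilon n}$ in the asymptotic analysis of \eqref{system for v with epsilon} degenerates. This is precisely why the lemma states continuity only on $U\setminus\{0\}$ and not on all of $U$; separating the analysis at $\varepsilon = 0$ from the one at $\varepsilon\ne 0$ is the essential novelty which will also drive the asymptotic behavior near the critical point in the next section.
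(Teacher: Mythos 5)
Your proposal follows the same route as the paper's proof: invert the variation-of-parameters matrix using the asymptotics \eqref{asymptotics of phi-alpha} to get $w_n\to(A_\alpha,\overline{A_\alpha})^T$, use $\exp(-Q^{(1)}_n(\lambda))\to I$, apply the constant diagonalizing conjugation, and feed the resulting relation between $\|\lim_n v_{\alpha,n}\|$ and $|A_\alpha(\lambda)|$ into \eqref{Weyl-Titchmatsh type formula phi}. Your extra paragraph on continuity (locally uniform convergence on compacts of $U\setminus\{0\}$ via an auxiliary Harris--Lutz step) is a reasonable way to back up an assertion the paper's proof leaves implicit; it is equivalent to observing continuity of $A_\alpha(\lambda)$ through the explicit relation with $\lim_n v_{\alpha,n}(\varepsilon)$. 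One small caveat: if you carry out the norm computation, $CC^*=2I$ gives $\|C^{-1}(A_\alpha,\overline{A_\alpha})^T\|^2=|A_\alpha|^2$ exactly, whereas the paper's proof records $|A_\alpha|^2=\|\lim v_{\alpha,n}\|^2/4$; this factor-of-four discrepancy (apparently a slip in the paper, immaterial for the power-law result of Theorem~\ref{thm result}) is something you should resolve explicitly rather than gloss over with ``by a constant factor''.
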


\begin{proof}
A straightforward substitution of the transformation $\Xi$ yields this result.
Indeed, asymptotics of the solution $\varphi$ and of its derivative \eqref{asymptotics of phi-alpha} together with the boundedness of
$\left(%
\begin{array}{cc}
\psi_-(x,\lambda) & \psi_+(x,\lambda) \\
\psi_-'(x,\lambda) & \psi_+'(x,\lambda) \\
\end{array}%
\right)^{-1}$
imply that there exists the limit
\begin{equation*}
    \lim_{x\rightarrow+\infty}
    \left(%
    \begin{array}{cc}
    \psi_-(x,\lambda) & \psi_+(x,\lambda) \\
    \psi_-'(x,\lambda) & \psi_+'(x,\lambda) \\
    \end{array}%
    \right)^{-1}
    \left(%
    \begin{array}{c}
    \varphi_{\alpha}(x,\lambda) \\
    \varphi_{\alpha}'(x,\lambda) \\
    \end{array}%
    \right)
    =
    \left(%
    \begin{array}{c}
    A_{\alpha}(\lambda) \\
    \overline{A_{\alpha}}(\lambda) \\
    \end{array}%
    \right).
\end{equation*}
Furthermore, since $\exp\bigl(-Q^{(1)}_n(\lambda)\bigr)\rightarrow I$ as $\ninf$, there exists the limit
\begin{equation*}
    \lim_{\ninf}v_{\alpha,n}(\varepsilon)=\frac12
    \left(%
    \begin{array}{cc}
    1 & 1 \\
    -i & i \\
    \end{array}%
    \right)
    \left(%
    \begin{array}{c}
    e^{-\frac i2\arg \beta_+(\nu_{cr})}A_{\alpha}(\lambda) \\
    e^{\frac i2\arg \beta_+(\nu_{cr})}\overline{A_{\alpha}}(\lambda) \\
    \end{array}%
    \right).
\end{equation*}
It follows that $|A_{\alpha}(\lambda)|^2=\l\|\lim\limits_{\ninf}v_{\alpha,n}(\varepsilon)\r\|^2/4$, and substitution of this to the Weyl-\linebreak Titchmarsh type formula \eqref{Weyl-Titchmatsh type formula phi} completes the proof.
\end{proof}

The summary of this section is that the study of the spectral density of $\mathcal L_{\alpha}$ is reduceable to the study of the system \eqref{system for v with epsilon} and, more precisely, to the study of the behavior of $\lim\limits_{\ninf}v_{\alpha,n}(\varepsilon)$ for small $\varepsilon$.

In the general case ($\nu_{cr}=\nu_{j,\pm}$) denote the coefficients $\beta$, which may be different at different resonance points, as $\beta_{j,\pm}$. Explicit calculations give: $\beta_{j,\pm}=\left|\frac{c\int_0^{a}\psi_{\pm}^2(t,\nu_{j,\pm})e^{2i\omega t}dt}{2aW\{\psi_+(\nu_{j,\pm}),\psi_-(\nu_{j,\pm})\}}\right|$.

\begin{rem}
Coefficients $\beta_{j,\pm}$ are not necessarily non-zero, because they are proportional to the Fourier coefficients of $p_+(\cdot,\nu_{j,\pm})$, which might be zero. E.g., consider the case of zero periodic potential, $q(x)\equiv0$. In this case one can choose the period $a$ arbitrarily and the result is independent of the choice (except for the case $\omega\in\frac{\pi\mathbb Z}a$). For any fixed $a$, the half-line is divided into spectral bands with coinciding endpoints,
$\left[\bigl(\frac{\pi j}a\bigr)^2;\bigl(\frac{\pi(j+1)}a\bigr)^2\right]$, $j\ge0$.
The quasi-momentum is $k(\lambda)=a\sqrt\lambda$, the critical points are $\nu_{j,+}=\left(\frac{\pi}a\left(j+1-\left\{\frac{a\omega}{\pi}\right\}\right)\right)^2$
and
$\nu_{j,-}=\left(\frac{\pi}a\left(j+\left\{\frac{a\omega}{\pi}\right\}\right)\right)^2$, $j\ge0$.
Bloch solutions are $\psi_{\pm}(x,\lambda)=e^{\pm ik(\lambda)\frac xa}=e^{\pm\sqrt\lambda x}$ and their periodic parts $p_+(x,\lambda)\equiv p_-(x,\lambda)\equiv1$ have only one non-zero Fourier coefficient. Explicit calculation shows that $\beta_{j,+}\equiv0$ and $\beta_{j,-}=0$ for every $j\neq\lfloor\frac{a\omega}{\pi}\rfloor$. For the single existing resonance point one has: $\nu_{\lfloor\frac{a\omega}{\pi}\rfloor,-}=\omega^2$ and $\beta_{\lfloor\frac{a\omega}{\pi}\rfloor,-}=\frac{|c|}{4\omega}$. This coincides with the classical results on the Wigner-von Neumann potential \cite{Wigner-von-Neumann-1929}. Our result concerning zeroes of the spectral density in this case is in accordance with the result of Hinton-Klaus-Shaw \cite{Hinton-Klaus-Shaw-1991}.
\end{rem}

\section{Model problem}\label{section model problem}
In this section our aim is to study the dependence on $\varepsilon$, which can be arbitrarily small, of the limits of solutions to the system \eqref{system for v with epsilon}. As we have shown in the previous section, this is equivalent to the study of the behavior of the spectral density of $\mathcal L_{\alpha}$ near critical points.

Let us make few comments on the structure of the coefficient matrix of the model system. One can write it as follows:
$I+\frac{\beta}nD_{-\varepsilon n}\sigma_3+R_n(\varepsilon)$,
where $D_{\varphi}=
\left(
  \begin{array}{cc}
    \cos\varphi & \sin\varphi \\
    -\sin\varphi & \cos\varphi \\
  \end{array}
\right)$ is the matrix of rotation by the angle $\varphi$ and
$\sigma_3=
\left(
  \begin{array}{cc}
    1 & 0 \\
    0 & -1 \\
  \end{array}
\right)$ (notation for the Pauli matrix) is the reflection matrix. The presence of the latter plays a very important role. With $\sigma_3$, the system is elliptic for $\varepsilon\in U\backslash\{0\}$ and hyperbolic for $\varepsilon=0$ (we call the system elliptic, if it has a base of solutions of the same order of magnitude and hyperbolic in the opposite case). Without $\sigma_3$, there is no change of type of the system at the point $\varepsilon=0$, the system is always elliptic. If $\sigma_3$ is absent, one can factor out the diagonal term of the first two summands $1+\beta\frac{\cos(\varepsilon n)}n$ and the coefficient matrix reduces to
$I+\frac{\beta\sin(\varepsilon n)}n
\left(
\begin{array}{cc}
0 & 1 \\
1 & 0 \\
\end{array}
\right)
+R_n(\varepsilon)$ with some other uniformly summable sequence $\{R_n(\varepsilon)\}_{n=1}^{\infty}$. The matrix in the second term here is constant and diagonalizable. Its spectrum is purely imaginary, and the Levinson theorem immediately gives the answer (=asymptotics of solutions), which is uniform in $\varepsilon\in U$ including the point $\varepsilon=0$. We could say that the problem is "scalarized" in this case. The situation is different in our case, since the difference between the two eigenvalues is not pure imaginary: $\sigma\left(I+\frac{\beta}nD_{-\varepsilon n}\sigma_3\right)=\left\{1\pm\frac{\beta}n\right\}$. This leads to serious troubles in the analysis and exhibits a new phenomenon.

One may expect that for sufficiently small values of $\varepsilon$ the magnitude of solutions is determined mostly by diagonal elements $1\pm\beta\frac{\cos(\varepsilon n)}n$. We want to transform the system in a way such that for every $\varepsilon\in U$ (including $\varepsilon=0$) the limit of every solution as $n\rightarrow\infty$ exists (for the system \eqref{system for v with epsilon} this is not true: if $\varepsilon=0$, then one of the solutions grows as $n^{\beta}$). To this end, we make the following substitution: $v_n=\exp\left(\beta\int_1^n\frac{\cos(\varepsilon r)}rdr\right)u_n$. This leads to the system $u_{n+1}=B_n(\varepsilon)u_n$ with
\begin{equation}\label{B-n}
    B_n(\varepsilon):=\exp\left(-\beta\int_n^{n+1}\frac{\cos(\varepsilon r)}rdr\right)
    \left[
    I+\frac{\beta}n
    \left(%
    \begin{array}{cc}
    \cos(\varepsilon n) & \sin(\varepsilon n) \\
    \sin(\varepsilon n) & -\cos(\varepsilon n) \\
    \end{array}%
    \right)
    +R_n(\varepsilon)
    \right].
\end{equation}
The existence of the limit of any solution of the system $u_{n+1}=B_n(\varepsilon)u_n$ is equivalent to the convergence of the infinite product $\prod_{n=1}^{\infty}B_n(\varepsilon)$ (in fact for $\varepsilon\neq0$ this follows from Lemma \ref{lem Weyl-Titchmarsh formula}). Moreover, most of the statements that we make about the asymptotic behavior of solutions of the system $u_{n+1}=B_n(\varepsilon)u_n$ can be formulated in terms of products of matrices $B_n(\varepsilon)$. In what follows we will choose the way of formulation depending on the convenience of its use. We are going to work with particular solutions determined by fixing their initial values, this is a discrete analogue of the Cauchy problem. To this end, we introduce the following notation: for given $\varepsilon\in U$ and the vector of initial data $f\in\mathbb C^2$ define the vector sequence $\{u_n(\varepsilon,f)\}_{n=1}^{\infty}$ by the recurrence relation
\begin{equation}\label{u(f)}
    \begin{array}{rl}
    u_1(\varepsilon,f) & :=f,
    \\
    u_{n+1}(\varepsilon,f) & :=B_n(\varepsilon)u_n(\varepsilon,f),\,n\ge1.
    \end{array}
\end{equation}

Note that due to the decomposition of the exponential term the matrix $B_n(\varepsilon)$ can be written as
\begin{equation*}
    B_n(\varepsilon)=I+\frac{\beta}n
    \left(
      \begin{array}{cc}
        \cos(\varepsilon n)-\int_n^{n+1}\cos(\varepsilon r)dr & \sin(\varepsilon n) \\
        \sin(\varepsilon n) & -\cos(\varepsilon n)-\int_n^{n+1}\cos(\varepsilon r)dr \\
      \end{array}
    \right)
    +\widetilde R_n(\varepsilon)
\end{equation*}
with a sequence $\{\widetilde R_n(\varepsilon)\}_{n=1}^{\infty}\in l^1(U)$. One can rewrite the system $u_{n+1}=B_n(\varepsilon)u_n$ as
\begin{equation}\label{system for u form with the difference}
    u_{n+1}-u_n=
    \left[
    \frac{\beta}n
    \left(
      \begin{array}{c}
        \cos(\varepsilon n)-\int_n^{n+1}\cos(\varepsilon r)dr \quad\quad\quad\quad \sin(\varepsilon n) \\
        \sin(\varepsilon n) \quad\quad\quad\quad -\cos(\varepsilon n)-\int_n^{n+1}\cos(\varepsilon r)dr \\
      \end{array}
    \right)
    +\widetilde R_n(\varepsilon)
    \right]
    u_n.
\end{equation}
The behavior of the solutions can be observed in a scale of the variable $y=n|\varepsilon|$ ("slow variable"). If one puts $z(y,\varepsilon,f):=u_{\left\lfloor\frac{y}{|\varepsilon|}\right\rfloor}(\varepsilon,f)$ and divides by $|\varepsilon|$, then \eqref{system for u form with the difference} becomes approximately
\begin{equation}\label{equation for z with difference}
    \frac{z(y+|\varepsilon|)-z(y)}{|\varepsilon|}\approx
    \left[
    \frac{\beta\,\text{sign}\,\varepsilon}y
    \left(
      \begin{array}{cc}
        0 & \sin y \\
        \sin y & -2\cos y \\
      \end{array}
    \right)
    +\frac1{|\varepsilon|}\widetilde R_{\left\lfloor\frac{y}{|\varepsilon|}\right\rfloor}(\varepsilon)
    \right]
    z(y)
\end{equation}
The remainder
$\frac1{|\varepsilon|}\widetilde R_{\left\lfloor\frac{y}{|\varepsilon|}\right\rfloor}(\varepsilon)$ is a step-wise constant matrix-valued function which is compressed in $\frac1{|\varepsilon|}$ times in the horizontal scale and stretched in $\frac1{|\varepsilon|}$ times in the vertical scale, therefore it concentrates near the origin of the variable $y$ and its $L_1$ norm is preserved. So we may expect that the remainder term will be absorbed into a new boundary condition of the limit problem. Expression on the left hand side of \eqref{equation for z with difference} becomes the derivative as $\varepsilon\rightarrow0$, and one "obtains" the following equation for the limits $h_{\pm}(y,f):=\lim\limits_{\varepsilon\rightarrow\pm0}z(y,\varepsilon,f)$:
\begin{equation}\label{equations for h-pm differential}
    h_{\pm}'(y)=\pm\frac{\beta}y
    \left(
      \begin{array}{cc}
        0 & \sin y \\
        \sin y & -2\cos y \\
      \end{array}
    \right)
    h_{\pm}(y),
\end{equation}
cf. \eqref{differential equation for h+ with no right hand side}. The remainder $\frac1{|\varepsilon|}\widetilde R_{\left\lfloor\frac{y}{|\varepsilon|}\right\rfloor}(\varepsilon)$ plays a role only in determining the initial values $h_{\pm}(0)$ for the solutions of \eqref{equations for h-pm differential}. It even suffices to know only $\widetilde R_n(0)$ to determine this initial value, under the condition of continuity of $\widetilde R_n(\varepsilon)$ for every $n$. Thus the picture of the whole phenomenon can be described as follows:
there exist two scales: "fast", discrete, $n\in\mathbb N$, and "slow", continuous, $y\in\mathbb R_+$. The system first moves along the first ("fast") scale with $\varepsilon=0$. The limit of the solution as $n\rightarrow\infty$ for $\varepsilon=0$ serves as initial value for the differential equation in the second ("slow") scale. Our aim in this section is to prove the following result which gives the exact formulation of the above considerations. We prefer to write an integral equation in the slow variable instead of the differential one, because the first has a unique solution while with the second one can have troubles due to the different behavior of the solutions near the origin in different cases depending on the value of $\beta$.

    \begin{thm}\label{thm discrete-continuous}
    Assume that functions $R_n(\cdot)$ are continuous in $U$ for every $n\in\mathbb N$, the matrices $B_n(\varepsilon)$ are invertible for every $n\in\mathbb N$, $\varepsilon\in U$ and the sequence $\{R_n(\varepsilon)\}_{n=1}^{\infty}\in l^1(U)$.
    Then for every $y>0$ and $f\in\mathbb C^2$ there exist two limits
    \begin{equation*}
        h_{\pm}(y,f):=\lim\limits_{\varepsilon\rightarrow\pm0}u_{\left\lfloor\frac y{|\varepsilon|}\right\rfloor}(\varepsilon,f),
    \end{equation*}
    which satisfy the following integral equations:
    \begin{equation}\label{equations for h}
    h_{\pm}(y,f)=\lim_{n\rightarrow\infty}u_n(0,f)
    \pm\int_0^y
    \left(
      \begin{array}{cc}
        0 & 1 \\
        \exp\left(-2\beta\int_t^y\frac{\cos s}sds\right) & 0 \\
      \end{array}
    \right)
    \frac{\beta\sin t}th_{\pm}(t,f)dt.
    \end{equation}
    Moreover, the following four limits exist and are equal:
    \begin{equation}\label{desired limit}
        \lim_{\varepsilon\rightarrow\pm0}\lim_{n\rightarrow\infty}u_n(\varepsilon,f)=\lim_{y\rightarrow+\infty}h_{\pm}(y,f).
    \end{equation}
    Additionally, the linear map $\Theta:f\mapsto \lim\limits_{n\rightarrow\infty}u_n(0,f)$ has rank one.
    \end{thm}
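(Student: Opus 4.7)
The plan follows the two-scale heuristic laid out just before the theorem: first analyze the fast discrete scale at $\varepsilon=0$ to obtain the initial datum, then pass to the slow scale $y=n|\varepsilon|$ and identify the integral equation, and finally glue the two regimes together.

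\textbf{Step 1: the $\varepsilon=0$ system and the map $\Theta$.}
Since $\cos 0=1$, $\sin 0=0$, $\int_n^{n+1}\cos(0)\,dr=1$, the formula for $B_n(\varepsilon)$ just below \eqref{B-n} collapses at $\varepsilon=0$ to $B_n(0)=\mathrm{diag}(1,1-2\beta/n)+\widetilde R_n(0)$. For $n>2\beta$ one has $\|B_n(0)\|\le 1+\|\widetilde R_n(0)\|$, and a Gronwall estimate gives $\sup_n\|u_n(0,f)\|<\infty$. The first component then satisfies $u^{(1)}_{n+1}-u^{(1)}_n=[\widetilde R_n(0)u_n]_1$, which is absolutely summable, hence $u^{(1)}_n$ converges. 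For the second component I would use the explicit solution formula together with the asymptotics $\prod_{j=k}^{n-1}(1-2\beta/j)\sim(k/n)^{2\beta}$ and split $\sum\|\widetilde R_k(0)\|$ into a fixed head and a small tail to conclude $u^{(2)}_n\to 0$. This forces $\Theta f\in\mathrm{span}(e_1)$ and so $\mathrm{rank}\,\Theta\le 1$. To exhibit rank exactly one, pick $n_0$ so large that $\sum_{k\ge n_0}\|\widetilde R_k(0)\|$ is small, factor $T_\infty=T^{(n_0)}_\infty T_{n_0}$ with $T_m:=B_{m-1}(0)\cdots B_1(0)$ and $T^{(n_0)}_\infty:=\lim_m B_{m-1}(0)\cdots B_{n_0}(0)$, and apply the Duhamel bound $\|T^{(n_0)}_\infty-\prod_{k\ge n_0}D_k\|\le\exp\bigl(\sum_{k\ge n_0}\|\widetilde R_k(0)\|\bigr)-1$, where $D_k=\mathrm{diag}(1,1-2\beta/k)$. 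This makes $(T^{(n_0)}_\infty)_{11}$ close to $1$; the standing invertibility of the $B_k(0)$'s makes $T_{n_0}$ invertible, and $T_\infty\ne 0$ follows.

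\textbf{Step 2: passage to the integral equation.}
For $\varepsilon\ne 0$ I would telescope \eqref{system for u form with the difference} into the Volterra identity
\begin{equation*}
u_n(\varepsilon,f)=u_{n_0}(\varepsilon,f)+\sum_{k=n_0}^{n-1}\left[\frac{\beta}{k}M_k(\varepsilon)+\widetilde R_k(\varepsilon)\right]u_k(\varepsilon,f),
\end{equation*}
with $M_k(\varepsilon)$ the oscillating matrix in \eqref{system for u form with the difference}. Setting $n=\lfloor y/|\varepsilon|\rfloor$ and using $\cos(\varepsilon k)\to\cos y$, $\varepsilon^{-1}(\sin\varepsilon(k+1)-\sin\varepsilon k)\to\cos y$, $\sin(\varepsilon k)\to\mathrm{sign}(\varepsilon)\sin y$ uniformly on $y$-compacts, together with $1/k=|\varepsilon|/y_k$ for $y_k=k|\varepsilon|$, the principal sum is a Riemann sum approximating $\pm\int_0^y\frac{\beta}{t}\left(\begin{smallmatrix}0&\sin t\\ \sin t&-2\cos t\end{smallmatrix}\right) h_\pm(t,f)\,dt$. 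The contribution of $\widetilde R_k(\varepsilon)$ for $k\le n_0$ converges by continuity of $\widetilde R_n(\cdot)$ to its $\varepsilon=0$ value; choosing $n_0=n_0(\varepsilon)\to\infty$ with $n_0|\varepsilon|\to 0$ absorbs $u_{n_0}(\varepsilon,f)$ into $\lim_n u_n(0,f)=\Theta f$, which is precisely the initial datum in \eqref{equations for h}. Independently, \eqref{equations for h} admits a unique continuous solution by a standard Volterra/Picard argument: the kernel has only the integrable singularity $t^{2\beta-1}$ at the origin, by the bound $\exp\bigl(-2\beta\int_t^y\cos s/s\,ds\bigr)\sim(t/y)^{2\beta}$. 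Matching initial data yields the pointwise convergence $u_{\lfloor y/|\varepsilon|\rfloor}(\varepsilon,f)\to h_\pm(y,f)$. Recovering the differential form \eqref{equations for h-pm differential} is a direct differentiation; the integral form is preferred because it accommodates all $\beta>0$ uniformly near $y=0$.

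\textbf{Step 3: interchange of the limits.}
Existence of $\lim_n u_n(\varepsilon,f)$ for $\varepsilon\ne 0$ follows from Lemma \ref{lem Weyl-Titchmarsh formula} translated back through the transformations of Section \ref{section discretization}. To identify the left-hand side of \eqref{desired limit} with $\lim_{y\to+\infty}h_\pm(y,f)$, I would prove a uniform estimate of the form $\|u_n(\varepsilon,f)-h_\pm(n|\varepsilon|,f)\|\le C(Y)$ with $C(Y)\to 0$ as $Y\to\infty$, valid for all $n|\varepsilon|\ge Y$ and all sufficiently small $|\varepsilon|$. This is the main obstacle: the kernel $\frac{\beta}{k}M_k(\varepsilon)$ is only conditionally summable, so a plain Gronwall bound fails. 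I would overcome it by one further summation by parts of Harris--Lutz type, analogous to Lemma \ref{lem properties of Harris-Lutz transformation} but applied on the tail $k\gg 1/|\varepsilon|$; Lemma \ref{lem estimate of sum for Harris-Lutz} then converts this tail into a boundary contribution of order $1/(n|\sin\varepsilon|)=O(1/(n|\varepsilon|))=O(1/Y)$ plus a genuinely absolutely summable remainder, uniformly in small $\varepsilon$. The parallel argument on the slow-variable integral equation shows $\lim_{y\to\infty}h_\pm(y,f)$ exists, and combining the two estimates yields equality of all four limits in \eqref{desired limit}.
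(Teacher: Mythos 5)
Your overall plan mirrors the paper's: analyze the fast scale at $\varepsilon=0$ to extract $\Theta$, pass to the slow variable $y=n|\varepsilon|$ to identify the limiting Volterra equation, and glue the two regimes by a Harris--Lutz cancellation on the tail $n\gg 1/|\varepsilon|$. Your Step 1 for $\mathrm{rank}\,\Theta=1$ is a by-hand version of what the paper obtains from the discrete Levinson theorem; that part is fine, and your observation that the differential form \eqref{equations for h-pm differential} is recovered by differentiating \eqref{equations for h} matches the paper.

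However, there is a genuine gap that runs through your Steps~2 and~3: you never establish (nor name as a needed ingredient) the $\varepsilon$-uniform a priori bound
\begin{equation*}
\sup_{\varepsilon\in U}\ \sup_{n\le\infty}\ \Bigl\|\prod_{k=1}^n B_k(\varepsilon)\Bigr\| < \infty .
\end{equation*}
Your Gronwall estimate in Step~1 is only at $\varepsilon=0$, and your proposed tail estimate in Step~3 controls only $n|\varepsilon|\ge Y$. The paper isolates this uniform bound as a separate lemma (Lemma~\ref{lem a priori estimate}), emphasizing that it is nontrivial precisely because the matrix $V_n^{(1)}$ is only conditionally summable: a plain Gronwall bound diverges like $\beta\log n$ on the range $n\lesssim 1/|\varepsilon|$, and only the chosen scaling factor $\exp\bigl(\beta\int_1^n\cos(\varepsilon r)/r\,dr\bigr)$ cancels that growth. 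The proof there combines variation of parameters up to $n\sim c_2/|\varepsilon|$ with a Harris--Lutz product estimate beyond. Without this bound, the dominated-convergence step you invoke implicitly (e.g.\ the diagonal absorption of $u_{n_0}(\varepsilon,f)$ into $\Theta f$ with $n_0\to\infty$, $n_0|\varepsilon|\to 0$, and the passage to the limit inside the $\widetilde R_k$-sum) does not go through.

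A second, related issue is in your Step~2: the ``Riemann sum approximating $\pm\int_0^y\cdots h_\pm(t,f)\,dt$'' is circular as written, because it presupposes $u_k(\varepsilon,f)\approx h_\pm(k|\varepsilon|,f)$, which is what one is proving. The paper closes this loop by proving operator-norm convergence $\mathcal K_{y_0}(\varepsilon)\to\mathcal K_{y_0}(\pm0)$ on $L_\infty((0;y_0),\mathbb C^2)$ and then applying $(I-\mathcal K_{y_0}(\varepsilon))^{-1}$; crucially, since the free term $g(\cdot,\varepsilon)$ converges to $\Theta f$ only pointwise (not in $L_\infty$-norm), one must work with the difference $z(\cdot,\varepsilon)-g(\cdot,\varepsilon)$ rather than apply the resolvent directly to $g$. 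Your sketch does not register this pointwise-versus-norm obstruction. If you add the uniform a priori bound as a standalone lemma and replace the Riemann-sum heuristic with a Volterra-operator convergence argument (or an explicit Gronwall comparison between $z(\cdot,\varepsilon)$ and $h_\pm$ that uses that bound), the proposal becomes a complete proof along the same lines as the paper's.
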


    \begin{rem}
    1. It follows that the linear map $f\mapsto\lim\limits_{\varepsilon\rightarrow\pm0}\lim\limits_{n\rightarrow\infty}u_n(\varepsilon,f)$ also has rank one.
    \\
    2. Note that $\lim\limits_{n\rightarrow\infty}u_n(0,f)
    =\lim\limits_{n\rightarrow\infty}\lim\limits_{\varepsilon\rightarrow\pm0}u_n(\varepsilon,f)$ and $\lim\limits_{\varepsilon\rightarrow\pm0}\lim\limits_{n\rightarrow\infty}u_n(\varepsilon,f)$ are the limits of the same expression taken in the different order and that they do not coincide. Our aim is to prove that the second limit of these two exists. It will follow then that the spectral density of $\mathcal L_{\alpha}$ can have zeros at critical points. In fact we can rewrite the second expression as
    $\lim\limits_{\varepsilon\rightarrow\pm0}\lim\limits_{y\rightarrow+\infty}u_{\left\lfloor\frac y{|\varepsilon|}\right\rfloor}(\varepsilon,f)$.
    Here the convergence of the first limit is uniform in $\varepsilon$ (unlike the convergence of $\lim\limits_{n\rightarrow\infty}u_n(\varepsilon,f)$ ), and this makes it possible to change the order of limits as in \eqref{desired limit}.
    \end{rem}

\begin{proof}
We divide the proof of Theorem \ref{thm discrete-continuous} into four steps.

\subsection*{Step I. A priori estimate and uniform convergence of the tail for the matrix product}
We start with few technical results concerning the system $u_{n+1}=B_n(\varepsilon)u_n$. These include uniform boundedness of its solutions (in both variables, $n$ and $\varepsilon$) and the uniform with respect to $\varepsilon$ convergence in the slow variable $y$.

    \begin{lem}\label{lem convergence of the product and degeneracy}
    Let $B_n(\varepsilon)$ be given by \eqref{B-n} and $\{R_n(\varepsilon)\}_{n=1}^{\infty}\in l^1(U)$. Then for every $\varepsilon\in U$ the product
    \begin{equation*}
        \prod_{n=1}^{\infty}B_n(\varepsilon)
    \end{equation*}
    converges. If matrices $B_n(\varepsilon)$ are invertible for every $n\in\mathbb N$ and $\varepsilon\in U$, then for every non-zero $\varepsilon$ the product is invertible, while for zero $\varepsilon$ it is of rank one.
    \end{lem}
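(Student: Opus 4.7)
The two assertions are of different nature, and I would split the analysis into the cases $\varepsilon\neq 0$ and $\varepsilon=0$, using throughout the alternative representation $B_n(\varepsilon)=I+\frac{\beta}{n}\Omega_n(\varepsilon)+\widetilde R_n(\varepsilon)$ displayed just before the lemma, where $\Omega_n(\varepsilon)$ is the $2\times 2$ trigonometric matrix and $\{\widetilde R_n(\varepsilon)\}\in l^1(U)$.

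For $\varepsilon\neq 0$, the plan is to run a second Harris--Lutz transformation at the level of the model system, in direct analogy with Lemma \ref{lem properties of Harris-Lutz transformation}. Set
\[ Q_n(\varepsilon):=-\sum_{m=n}^{\infty}\frac{\beta}{m}\Omega_m(\varepsilon); \]
the entries are sums of $e^{\pm i\varepsilon m}/m$ with $\sin$-denominators separated from zero on any compact subset of $U\setminus\{0\}$, so Lemma \ref{lem estimate of sum for Harris-Lutz} gives $Q_n(\varepsilon)=O(1/n)$. Expanding the exponentials and absorbing $O(1/n^2)$ terms into the remainder yields
\[ \exp(-Q_{n+1}(\varepsilon))\,B_n(\varepsilon)\,\exp(Q_n(\varepsilon))=I+\hat R_n(\varepsilon),\qquad \{\hat R_n(\varepsilon)\}\in l^1. \]
The telescoping identity
\[ \prod_{n=1}^{N}B_n(\varepsilon)=\exp(Q_{N+1}(\varepsilon))\prod_{n=1}^{N}(I+\hat R_n(\varepsilon))\exp(-Q_1(\varepsilon)) \]
then forces convergence: the $\hat R$-product converges absolutely and $\exp(Q_{N+1})\to I$. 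The assumed invertibility of each $B_n(\varepsilon)$ passes to each $I+\hat R_n(\varepsilon)$ under the conjugation, and since the tail factors are arbitrarily close to $I$ the limit product is invertible.

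At $\varepsilon=0$ the oscillating matrix collapses and the representation simplifies to $B_n(0)=\mathrm{diag}(1,\,1-2\beta/n)+\widetilde R_n(0)$. Writing $u_n(0,f)=a_n e_1+b_n e_2$, I would prove three claims: (i) $\sup_n\|u_n(0,f)\|<\infty$ via the Gronwall estimate $\|u_{n+1}\|\le\|u_n\|(1+\|\widetilde R_n(0)\|)$ combined with summability of $\widetilde R_n(0)$; (ii) $b_n\to 0$ by variation of parameters in the $b_n$-recurrence, using the standard splitting at $\sqrt n$ to show $n^{-2\beta}\sum_{m<n}m^{2\beta}\|\widetilde R_m(0)\|\to 0$; (iii) with $b_n\to 0$ and $\|u_n\|$ bounded, the differences $|a_{n+1}-a_n|\le\|\widetilde R_n(0)\|(|a_n|+|b_n|)$ are absolutely summable, so $a_n\to a_\infty(f)$. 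Together these show $\lim_{\ninf}u_n(0,f)\in\mathrm{span}(e_1)$, whence $\prod B_n(0)$ has rank at most one.

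The main obstacle is to upgrade ``at most one'' to ``exactly one'', i.e.\ to show the limit map $f\mapsto a_\infty(f)e_1$ is not identically zero. I would construct a dominant solution by a tail contraction: for $N_0$ so large that $\sum_{m\ge N_0}\|\widetilde R_m(0)\|$ is small, seek a solution of the form $u_n=e_1+\eta_n$ with $\eta_n=a_n^{\eta}e_1+b_n^{\eta}e_2\to 0$. Claim (ii) applied to the inhomogeneous $\eta$-system forces $b_n^{\eta}\to 0$ for free, so, imposing $b_{N_0}^{\eta}=0$, the condition $a_n^{\eta}\to 0$ collapses to a scalar fixed-point equation for $a_{N_0}^{\eta}$ whose linear part has operator norm $\lesssim\sum_{m\ge N_0}\|\widetilde R_m(0)\|<1$, hence is uniquely solvable. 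Propagating the resulting $u_{N_0}$ backwards through the invertible matrices $B_1(0),\dots,B_{N_0-1}(0)$ produces a non-zero $f_0$ with $\lim_{\ninf}u_n(0,f_0)=e_1$, giving rank exactly one. The rank-upper-bound and the boundedness/decay assertions rely only on soft $l^1$ estimates; it is specifically this rank-exactly-one construction --- ruling out degeneration of the limit to the zero matrix --- that requires both the tail contraction and the invertibility hypothesis on $B_n(0)$.
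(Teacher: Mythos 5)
Your proposal is correct, but it takes a more hands-on route than the paper. Where you reprove the needed asymptotic facts from scratch, the paper simply puts $B_n=I+V^{(1)}_n+R^{(5)}_n$ (an equivalent decomposition of your $I+\frac{\beta}{n}\Omega_n+\widetilde R_n$) and invokes the discrete Levinson theorem of Benzaid--Lutz twice: Theorem~3.1 of that paper for $\varepsilon\neq0$, yielding a base of solutions of the form $e_1+o(1)$, $e_2+o(1)$ (hence an invertible limit of the product), and Theorem~2.2 for $\varepsilon=0$, yielding a base $e_1+o(1)$, $n^{-2\beta}(e_2+o(1))$ (hence a rank-one limit). Your $\varepsilon\neq0$ argument --- the secondary Harris--Lutz conjugation $Q_n=-\sum_{m\geq n}\frac{\beta}{m}\Omega_m$, the telescoping identity, and the convergence of $\prod(I+\hat R_n)$ --- is precisely a self-contained proof of the conditional variant of the discrete Levinson theorem for an oscillating non-resonant perturbation, and your $\varepsilon=0$ argument (Gronwall boundedness, decay of the $e_2$-component via variation of parameters with the tail split, absolute summability of the $e_1$-increments, and a tail-contraction/fixed-point construction of a solution tending to a nonzero multiple of $e_1$) reproves the dichotomy version. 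Both approaches are valid; the paper's is shorter because it leans on a citation, while yours is self-contained and makes explicit where the invertibility of $B_n(0)$ is actually used, namely in propagating the good initial datum $u_{N_0}=e_1$ backwards to $n=1$. One small remark: the fixed-point step to hit the limit exactly $e_1$ is more than is needed --- taking $u_{N_0}=e_1$ and showing directly by the same tail-smallness that $a_\infty$ stays near $1$ already gives a nonzero limit --- but this does not affect correctness.
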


\begin{proof}
This follows from the discrete Levinson theorem \cite{Benzaid-Lutz-1987}. Using the expansion
\begin{equation}\label{decomposition of the exponent}
    \exp\left(-\beta\int_n^{n+1}\frac{\cos r}rdr\right)=1-\frac{\beta}n\int_n^{n+1}\cos(\varepsilon r)dr+O\left(\frac1{n^2}\right)\text{ as }n\rightarrow\infty,
\end{equation}
we rewrite each matrix of the sequence $B_n(\varepsilon)$ in the following form (at places, we drop the argument $\varepsilon$ in order to simplify the notation, hoping that this will not lead to any confusion):
\begin{equation}\label{B-n form 2}
    B_n=I+V^{(1)}_n+R^{(5)}_n,
\end{equation}
where $\{R^{(5)}_n(\varepsilon)\}_{n=1}^{\infty}\in l^1(U)$ and
\begin{equation}\label{V-1}
    V^{(1)}_n(\varepsilon):=\frac{\beta}n
    \left(
    \begin{array}{cc}
    \cos(\varepsilon n)-\int_n^{n+1}\cos(\varepsilon r)dr & \sin(\varepsilon n) \\
    \sin(\varepsilon n) & -\cos(\varepsilon n)-\int_n^{n+1}\cos(\varepsilon r)dr \\
    \end{array}
    \right).
\end{equation}
One has:
\begin{equation}\label{equality for the integral}
    \int_n^{n+1}\cos(\varepsilon r)dr=\frac{\cos\varepsilon-1}{\varepsilon}\sin(\varepsilon n)+\frac{\sin\varepsilon}{\varepsilon}\cos(\varepsilon n).
\end{equation}

For $\varepsilon\neq0$ the conditions of Theorem 3.1 from \cite{Benzaid-Lutz-1987} are satisfied. The named theorem yields the existence of a base of solutions of the system $u_{n+1}=B_n(\varepsilon)u_n$, which have the asymptotics
\begin{equation*}
    \left(
      \begin{array}{c}
        1 \\
        0 \\
      \end{array}
    \right)
    +o(1)\text{ and }
    \left(
      \begin{array}{c}
        0 \\
        1 \\
      \end{array}
    \right)
    +o(1)\text{ as }n\rightarrow\infty.
\end{equation*}
This is equivalent to the convergence of the product $\prod_{n=1}^{\infty}B_n(\varepsilon)$ and its invertibility.

For $\varepsilon=0$ the matrix is reduced to
\begin{equation*}
    B_n=
    \left(
      \begin{array}{cc}
        1 & 0 \\
        0 & 1-\frac{2\beta}n \\
      \end{array}
    \right)
    +R^{(5)}_n,
\end{equation*}
and the base of solutions changes accordingly:
\begin{equation*}
    \left(
      \begin{array}{c}
        1 \\
        0 \\
      \end{array}
    \right)
    +o(1)\text{ and }
    \frac1{n^{2\beta}}
    \left[
    \left(
      \begin{array}{c}
        0 \\
        1 \\
      \end{array}
    \right)
    +o(1)
    \r]
    \text{ as }n\rightarrow\infty,
\end{equation*}
which follows from the discrete Levinson theorem \cite[Theorem 2.2]{Benzaid-Lutz-1987}. The existence of such a base of solutions is in its turn equivalent to the convergence of the product $\prod_{n=1}^{\infty}B_n(0)$ to a rank one matrix (since the second solution goes to zero as $n\rightarrow\infty$).
\end{proof}

    \begin{rem}\label{rem about Theta}
    The rank one matrix $\Theta=\prod_{n=1}^{\infty}B_n(0)$ defines the linear map \linebreak $f\mapsto\lim_{n\rightarrow\infty}u_n(0,f)$.
    \end{rem}

Asymptotics of solutions of the equation $u_{n+1}=B_n(\varepsilon)u_n$ given above
demonstrate the change of the system type. For non-zero $\varepsilon$ the system is elliptic (i.e., its solutions have the same rate of growth), while for $\varepsilon=0$ the system is hyperbolic (i.e., there exists a base of solutions, which have uncomparable magnitudes; this yields the existence of a subordinate solution).

The following lemma states the uniform convergence of the product of matrices $B_n(\varepsilon)$ in the slow scale.

    \begin{lem}\label{lem convergence of the tail}
    Let $\{R_n(\varepsilon)\}_{n=1}^{\infty}\in l^1(U)$. Then
    \begin{equation*}
        \prod_{n>\frac y{|\varepsilon|}}B_n(\varepsilon)\rightarrow I\text{ as }y\rightarrow+\infty\text{ uniformly in }\varepsilon\in U\backslash\{0\}.
    \end{equation*}
    \end{lem}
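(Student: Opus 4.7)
The goal is to show that, in the slow scale $y=n|\varepsilon|$, the tail of the matrix product differs from $I$ by $O(1/y)$ uniformly in $\varepsilon\in U\setminus\{0\}$. The idea is to perform an additional Harris--Lutz type substitution, now uniform in $\varepsilon$ on the slow scale, which absorbs the slowly decaying oscillating perturbation $V^{(1)}_n(\varepsilon)$ into the phase and leaves an absolutely summable remainder.

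First, I would combine the decomposition \eqref{B-n form 2} with the explicit formula \eqref{V-1} and the identity \eqref{equality for the integral} to write each entry of $V^{(1)}_n(\varepsilon)$ as $\beta/n$ times a linear combination of $\cos(\varepsilon n)$ and $\sin(\varepsilon n)$ whose coefficients (of the form $1-\sin\varepsilon/\varepsilon$ or $(1-\cos\varepsilon)/\varepsilon$) are uniformly bounded on $\overline U$. Lemma~\ref{lem estimate of sum for Harris-Lutz} then gives
\[
\Bigl\|\sum_{n=N}^{\infty}V^{(1)}_n(\varepsilon)\Bigr\|\le\frac{C}{N|\sin(\varepsilon/2)|}.
\]
Since $\overline U\subset(-2\pi;2\pi)$, the ratio $|\varepsilon|/|\sin(\varepsilon/2)|$ is bounded on $U$, so substituting $N=\lfloor y/|\varepsilon|\rfloor$ yields a tail of order $1/y$ uniformly in $\varepsilon$. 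The tail of $R^{(5)}_n$ tends to zero uniformly by its $l^1(U)$ property together with the boundedness of $|\varepsilon|$ on $U$ (so that $y/|\varepsilon|\to+\infty$ uniformly as $y\to+\infty$).

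Second, define $Q_n(\varepsilon):=-\sum_{m=n}^{\infty}V^{(1)}_m(\varepsilon)$, so that $Q_{n+1}-Q_n=V^{(1)}_n$ and $\|Q_n(\varepsilon)\|=O(1/y)$ for $n>y/|\varepsilon|$. Substituting $u_n=\exp(Q_n(\varepsilon))\tilde u_n$, the new step matrix $\tilde B_n:=\exp(-Q_{n+1})B_n\exp(Q_n)$ takes the form $I+\tilde R_n$: by construction the first-order $V^{(1)}_n$ term is cancelled by $Q_n-Q_{n+1}$, and $\tilde R_n$ is the sum of $R^{(5)}_n$ and of second-order terms such as $(V^{(1)}_n)^2$, $[V^{(1)}_n,Q_n]$, and $R^{(5)}_n Q_n$. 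Each contributes $O(1/y)$ to the tail sum; for example
\[
\sum_{n>y/|\varepsilon|}\|[V^{(1)}_n,Q_n]\|\le\frac{C}{|\sin(\varepsilon/2)|}\sum_{n>y/|\varepsilon|}\frac{1}{n^2}\le\frac{C'|\varepsilon|}{y|\sin(\varepsilon/2)|}=O(1/y).
\]
Hence $\sum_{n>y/|\varepsilon|}\|\tilde R_n(\varepsilon)\|=o(1)$ uniformly, so $\prod_{n>y/|\varepsilon|}\tilde B_n(\varepsilon)\to I$ uniformly. Using the telescoping identity
\[
\prod_{n=N+1}^{M}B_n=\exp(Q_{M+1})\Bigl(\prod_{n=N+1}^{M}\tilde B_n\Bigr)\exp(-Q_{N+1}),
\]
passing to $M\to+\infty$ (where $Q_{M+1}\to 0$), and noting $\exp(-Q_{N+1})=I+O(1/y)$, the original tail product equals $I+o(1)$ uniformly in $\varepsilon\in U\setminus\{0\}$ as $y\to+\infty$, which is the claim.

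\textbf{Main obstacle.} The technical core is the delicate cancellation between the factor $1/|\sin(\varepsilon/2)|\sim 1/|\varepsilon|$ appearing in $Q_n$ and the extra factor $|\varepsilon|$ arising from the slow-scale truncation $n>y/|\varepsilon|$. Their ratio is precisely what makes every second-order remainder of the Harris--Lutz transformation uniformly of order $1/y$, and this must be verified for each type of term, including the non-commutative commutator $[V^{(1)}_n,Q_n]$ that has no analogue in a scalar version of the argument; losing control in any one of them would destroy uniformity in $\varepsilon$.
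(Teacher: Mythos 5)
Your proof is correct and follows essentially the same route as the paper: a uniform Harris--Lutz substitution built from the summed tails $Q_n=\sum_{m\ge n}V^{(1)}_m$, the estimate from Lemma~\ref{lem estimate of sum for Harris-Lutz}, and the crucial observation that $|\varepsilon|/|\sin(\varepsilon/2)|$ is bounded on $U$ so that the slow-scale truncation $n>y/|\varepsilon|$ cancels the $1/|\sin(\varepsilon/2)|$ in $\|Q_n\|$. The only difference is that the paper conjugates by $(I-Q^{(2)}_n)$ rather than $\exp(Q_n)$, which has the small advantage that the algebraic identity $(I-Q_{n+1})^{-1}(I+Q_n-Q_{n+1})(I-Q_n)=I+(I-Q_{n+1})^{-1}(Q_{n+1}-Q_n)Q_n$ exhibits the new remainder directly as a $V^{(1)}_nQ_n$-type term; with $\exp(Q_n)$ your list of second-order terms is correct, but it deserves a word of justification (via the telescoping relation $Q_{n+1}-Q_n=V^{(1)}_n$) that no bare $Q_n^2$-type term appears, since the tail sum of $\|Q_n\|^2$ is only $O(1/(y|\varepsilon|))$ and would destroy uniformity as $\varepsilon\to0$.
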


\begin{proof}
The sequence $V^{(1)}_n$ given by \eqref{V-1} has the following property:
\begin{equation}\label{estimate of the sum of V-1}
    \left\|\sum_{k\ge n}V^{(1)}_k(\varepsilon)\right\|
    \le
    \frac{4\beta}{n\left|\sin\frac{\varepsilon}2\right|}
    \text{ for every }n\in\mathbb N\text{ and }\varepsilon\in U\backslash\{0\}.
\end{equation}
This easily seen using the equality \eqref{equality for the integral}, elementary estimates $\left|\frac{\sin\varepsilon}{\varepsilon}\right|\le1$, $\left|\frac{\cos\varepsilon-1}{\varepsilon}\right|\le1$ and Lemma \ref{lem estimate of sum for Harris-Lutz}. It enables to define the sequence
\begin{equation*}
    Q^{(2)}_n:=\sum_{k=n}^{\infty}V^{(1)}_k.
\end{equation*}
Then
\begin{equation*}
    B_n=I+Q^{(2)}_n-Q^{(2)}_{n+1}+R^{(5)}_n.
\end{equation*}
Following the ideas of \cite{Harris-Lutz-1975,Benzaid-Lutz-1987}, we want to consider the Harris-Lutz transformation: $B^{(1)}_n:=\bigl(I-Q^{(2)}_{n+1}\bigr)^{-1}B_n\bigl(I-Q^{(2)}_n\bigr)$. If $n>\frac{c_2}{|\varepsilon|}$ with, say,
\begin{equation}\label{c-2}
    c_2:=8\beta\sup\limits_{\varepsilon\in U\backslash\{0\}}\frac{|\varepsilon|}{\left|\sin\frac{\varepsilon}2\right|},
\end{equation}
then it is easy to see that the estimate $\|Q^{(2)}_n(\varepsilon)\|<\frac12$ holds yielding the invertibility of $\bigl(I-Q^{(2)}_{n+1}\bigr)$. For such values of $n$ by a straightforward calculation one has:
\begin{multline}\label{V-2,R-6}
    B^{(1)}_n=\bigl(I-Q^{(2)}_{n+1}\bigr)^{-1}\bigl(I+Q^{(2)}_n-Q^{(2)}_{n+1}\bigr)\bigl(I-Q^{(2)}_n\bigr)
    +\bigl(I-Q^{(2)}_{n+1}\bigr)^{-1}R^{(5)}_n\bigl(I-Q^{(2)}_n\bigr)
    \\
    =I+\underbrace{\bigl(I-Q^{(2)}_{n+1}\bigr)^{-1}\bigl(Q^{(2)}_{n+1}-Q^{(2)}_n\bigr)Q^{(2)}_n}_{=:V^{(2)}_n}
    +\underbrace{\bigl(I-Q^{(2)}_{n+1}\bigr)^{-1}R^{(5)}_n\bigl(I-Q^{(2)}_n\bigr)}_{=:R^{(6)}_n}.
\end{multline}
Using the trivial bounds $\|V^{(2)}_n\|<2\|V^{(1)}_n\|\|Q^{(2)}_n\|,\|R^{(6)}_n\|<3\|R^{(5)}_n\|,\|V^{(1)}_n\|\le\frac{2\beta}n$ and the bound $\|Q^{(2)}_n(\varepsilon)\|\le\frac{4\beta}{n\left|\sin\frac{\varepsilon}2\right|}$, one has
\begin{equation*}
    \sum_{n>\frac y{|\varepsilon|}}\left\|V^{(2)}_n(\varepsilon)+R^{(6)}_n(\varepsilon)\right\|
    \le\sum_{n>\frac y{|\varepsilon|}}\frac{16\beta^2|\varepsilon|}{y\left|\sin\frac{\varepsilon}2\right|}+3\sum_{n>\frac y{|\varepsilon|}}\left\|R^{(5)}_n(\varepsilon)\right\|
    \rightarrow0\text{ as }y\rightarrow+\infty
\end{equation*}
uniformly in $\varepsilon\in U\backslash\{0\}$. A rather rough argument repeating the scalar estimates yields:
\begin{equation*}
    \left\|\left(\prod_{n>\frac y{|\varepsilon|}}B^{(1)}_n(\varepsilon)\right)-I\right\|
    \le\exp\left(\sum_{n>\frac y{|\varepsilon|}}\|V^{(2)}_n(\varepsilon)+R^{(6)}_n(\varepsilon)\|\right),
\end{equation*}
hence the assertion of the lemma holds, if $B_n$ is replaced by $B^{(1)}_n$. Then, coming back to the product of matrices $B_n(\varepsilon)$:
\begin{equation}\label{previous equality}
    \prod_{k>\frac y{|\varepsilon|}}B_k(\varepsilon)=\Bigl(I-\lim\limits_{n\rightarrow\infty}Q^{(2)}_n(\varepsilon)\Bigr)
    \left(\prod_{k>\frac y{|\varepsilon|}}B^{(1)}_k(\varepsilon)\right)
    \Bigl(I-Q^{(2)}_{\left\lfloor\frac{y}{|\varepsilon|}\right\rfloor+1}(\varepsilon)\Bigr)^{-1}.
\end{equation}
Due to the estimate \eqref{estimate of the sum of V-1}, $Q^{(2)}_{\left\lfloor\frac{y}{|\varepsilon|}\right\rfloor}(\varepsilon)\rightarrow0$ as $y\rightarrow+\infty$ uniformly in $\varepsilon\in U\backslash\{0\}$. Therefore the convergence to the identity matrix in \eqref{previous equality} is uniform. This completes the proof.
\end{proof}

The next lemma completes Step I and proves the uniform boundedness of all partial products of matrices $B_n(\varepsilon)$. This lemma is rather non-trivial and plays an important role in the rest of the proof. It is only due to the choice of the scaling factor in $v_n=\exp\left(\beta\int_1^n\frac{\cos(\varepsilon r)}rdr\right)u_n$ that these products are uniformly bounded.

    \begin{lem}\label{lem a priori estimate}
    Let $\{R_n(\varepsilon)\}_{n=1}^{\infty}\in l^1(U)$. Then there exists a constant $c_3$ such that for every $\varepsilon\in U$ and every $n\le\infty$
    \begin{equation}\label{a priori estimate}
        \left\|\prod_{k=1}^nB_k(\varepsilon)\right\|<c_3.
    \end{equation}
    \end{lem}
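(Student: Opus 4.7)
The plan is to split the partial product $\prod_{k=1}^N B_k(\varepsilon)$ at the threshold $n_* := \lceil c_2/|\varepsilon|\rceil$, with $c_2$ as in \eqref{c-2} (for $\varepsilon=0$ the full infinite product converges by Lemma \ref{lem convergence of the product and degeneracy}, so its partial products are already bounded). The tail $\prod_{k>n_*}^N B_k(\varepsilon)$ lies in the regime where the Harris--Lutz transformation of Lemma \ref{lem convergence of the tail} is valid, whereas the initial factor $\prod_{k=1}^{n_*}B_k(\varepsilon)$ has length blowing up like $|\varepsilon|^{-1}$ and demands a separate argument.

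For the tail I would reuse the Harris--Lutz computation from the proof of Lemma \ref{lem convergence of the tail}: since $\|Q^{(2)}_k(\varepsilon)\|\le 1/2$ for $k>n_*$, the matrices $I-Q^{(2)}_k$ are uniformly invertible, so the transformed product $\prod_{k>n_*}^N B^{(1)}_k(\varepsilon)$ is bounded in operator norm by $\exp\bigl(\sum_{k>n_*}(\|V^{(2)}_k\|+\|R^{(6)}_k\|)\bigr)$, which is uniform in $\varepsilon$ and $N$ by the estimates already given there. Conjugating back by matrices of norm at most two yields a uniform bound on the tail factor.

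For the initial factor, the naive telescoping $\|B_k\|\le 1+O(1/k)$ only delivers a bound of order $n_*^{O(1)}\sim|\varepsilon|^{-O(1)}$, which is useless. The key is to exploit the exponential scaling $\exp(-\beta\int_n^{n+1}\cos(\varepsilon r)/r\,dr)$ built into $B_n$ via \eqref{B-n}, which was chosen specifically so as to balance the $(1+\beta/k)$ eigenvalue growth of $I+\frac{\beta}{k}M_k(\varepsilon)$ along the unstable eigendirection of the reflection $M_k(\varepsilon)$. Concretely, I would write $\prod_{k=1}^{n_*}B_k(\varepsilon)=\exp\bigl(-\beta\int_1^{n_*+1}\cos(\varepsilon r)/r\,dr\bigr)\cdot Q_{n_*}(\varepsilon)$ and pass to the instantaneous eigenframe of $M_k(\varepsilon)$ (rotation by $\varepsilon k/2$), in which each factor of $Q_{n_*}$ becomes approximately diagonal with eigenvalues $1\pm\beta/k$. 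The dominant eigendirection contributes growth $n_*^\beta\sim|\varepsilon|^{-\beta}$, cancelling the scaling factor $\sim|\varepsilon|^\beta$; the off-diagonal coupling produced by the drift of consecutive eigenframes by the fixed angle $\varepsilon/2$ is oscillatory in $k$ and can be summed by parts as in Lemma \ref{lem estimate of sum for Harris-Lutz}.

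The main obstacle I foresee is implementing this balance rigorously with uniformity down to $\varepsilon=0$. The compensation between the exponential scaling and the eigenvalue growth must be exact up to $O(1)$, because any excess drift in the rotating frame would leak growth from the dominant eigendirection into the subordinate one, spoiling the uniform bound precisely in the transition region $n\sim|\varepsilon|^{-1}$ where the Harris--Lutz transformation of Lemma \ref{lem convergence of the tail} is no longer available.
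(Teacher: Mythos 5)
Your split at $n_*\sim c_2/|\varepsilon|$ and your treatment of the tail via the Harris--Lutz conjugation by $I-Q^{(2)}_n$ both match the paper's argument. Your diagnosis of the hard part is also on target: the naive $\prod(1+O(1/k))$ bound blows up, and the exponential scaling built into \eqref{B-n} is precisely what has to save the initial segment.

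However, the mechanism you propose for the initial segment has a genuine gap, and it is not the mechanism of the paper. When you conjugate $I+\frac{\beta}{k}D_{-\varepsilon k}\sigma_3$ into the rotating frame $D_{\varepsilon k/2}$, each factor becomes $D_{-\varepsilon k/2}\bigl(I+\frac{\beta}{k}\sigma_3\bigr)D_{\varepsilon k/2}$, and the mismatch between consecutive frames is the \emph{fixed} small rotation $D_{-\varepsilon/2}$ --- constant in $k$, not oscillatory. So the off-diagonal coupling that leaks the $n^{\beta}$ channel into the $n^{-\beta}$ channel is a steady $O(\varepsilon)$ drift, and the summation-by-parts cancellation of Lemma~\ref{lem estimate of sum for Harris-Lutz} simply does not apply to it. You would then be forced to track the exact balance between the $n^{\pm\beta}$ growth/decay and the constant coupling, which is the very obstacle you flag at the end and do not resolve.

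The paper sidesteps all of this by \emph{not} diagonalizing $D_{-\varepsilon k}\sigma_3$. Instead it splits $B_n$ as the diagonal matrix $\operatorname{diag}\bigl(1,\exp(-2\beta\int_n^{n+1}\frac{\cos(\varepsilon r)}{r}dr)\bigr)$ plus a remainder $V^{(3)}_n+R^{(7)}_n$, then runs variation of parameters against the diagonal part and applies Gronwall. Two pointwise facts make this work uniformly: (i) the transition factors $\exp(-2\beta\int_t^y\frac{\cos s}{s}ds)$ for the diagonal flow are bounded by $3^{2\beta}$ for all $0\le t\le y\le\infty$ (estimate \eqref{estimate of the oscillating integral}); and (ii) crucially, $\|V^{(3)}_n(\varepsilon)\|\le\frac{5\beta}{2}|\varepsilon|$ uniformly in $n$ --- it is $O(|\varepsilon|)$, not merely $O(1/n)$, because $|\sin(\varepsilon n)|/n\le|\varepsilon|$ and the leftover diagonal piece $\cos(\varepsilon n)-\int_n^{n+1}\cos(\varepsilon r)dr$ is likewise $O(n|\varepsilon|)$. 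Thus the perturbation contributes at most $\frac{5\beta}{2}|\varepsilon|\cdot n_*\lesssim 1$ over the initial block, and Gronwall closes the bound with a constant depending only on $\beta$, $c_2$ and $\sum\|R^{(7)}_k\|$. This $O(|\varepsilon|)$-per-step observation, combined with variation of parameters rather than any eigenframe change, is the ingredient your proposal is missing.
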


\begin{proof}
Using the decomposition of the exponent \eqref{decomposition of the exponent} again one can rewrite the sequence $B_n$ in the following form:
\begin{equation*}
    B_n(\varepsilon)=
    \left(%
    \begin{array}{cc}
    1 & 0 \\
    0 & \exp\l(-2\beta\int\limits_n^{n+1}\frac{\cos(\varepsilon r)}rdr\r) \\
    \end{array}%
    \right)
    +V^{(3)}_n(\varepsilon)+R^{(7)}_n(\varepsilon)
\end{equation*}
with some $\{R^{(7)}_n(\varepsilon)\}_{n=1}^{\infty}\in l^1(U)$ and
\begin{multline}\label{V^{(3)}}
    V^{(3)}_n(\varepsilon):=\frac{\beta}n\int_n^{n+1}\cos(\varepsilon r)dr
    \left(
      \begin{array}{cc}
        -1 & 0 \\
        0 & 1 \\
      \end{array}
    \right)
    +\frac{\beta}n
    \left(
      \begin{array}{cc}
        \cos(\varepsilon n) & \sin(\varepsilon n) \\
        \sin(\varepsilon n) & -\cos(\varepsilon n) \\
      \end{array}
    \right)
    \\
    =\frac{\beta}n\left(\cos(\varepsilon n)-\int_n^{n+1}\cos(\varepsilon r)dr\right)
    \left(
      \begin{array}{cc}
        1 & 0 \\
        0 & -1 \\
      \end{array}
    \right)
    +\frac{\beta}n\sin(\varepsilon n)
    \left(
      \begin{array}{cc}
        0 & 1 \\
        1 & 0 \\
      \end{array}
    \right).
\end{multline}
We are going to perform the variation of parameters in the discrete equation
\begin{equation*}
    u_{n+1}=
    \left[
    \left(%
    \begin{array}{cc}
    1 & 0 \\
    0 & \exp\l(-2\beta\int\limits_n^{n+1}\frac{\cos(\varepsilon r)}rdr\r) \\
    \end{array}%
    \right)
    +V^{(3)}_n(\varepsilon)+R^{(7)}_n(\varepsilon)
    \right]
    u_n,
\end{equation*}
considering it as a perturbation of the equation
\begin{equation*}
    u_{n+1}=
    \left(%
    \begin{array}{cc}
    1 & 0 \\
    0 & \exp\l(-2\beta\int\limits_n^{n+1}\frac{\cos(\varepsilon r)}rdr\r) \\
    \end{array}%
    \right)
    u_n.
\end{equation*}
This leads to the following:
\begin{multline}\label{equation for u after the variation of parameters}
    u_n(\varepsilon,f)=
    \left(%
    \begin{array}{cc}
    1 & 0 \\
    0 & \exp\l(-2{\beta}\int_1^n\frac{\cos(\varepsilon r)}rdr\r) \\
    \end{array}%
    \right)
    f
    \\
    +\sum_{k=1}^{n-1}
    \left(%
    \begin{array}{cc}
    1 & 0 \\
    0 & \exp\l(-2{\beta}\int_{k+1}^n\frac{\cos(\varepsilon r)}rdr\r) \\
    \end{array}%
    \right)
    \left(V^{(3)}_k(\varepsilon)+R^{(7)}_k(\varepsilon)\right)u_k(\varepsilon,f).
\end{multline}
Using the Gronwall's lemma and a simple estimate
\begin{equation}\label{estimate of the oscillating integral}
    \exp\l(-2\beta \int_t^y\frac{\cos s}sds\r)
    \le
    \exp\l(-2\beta \int_{\frac{\pi}2}^{\frac{3\pi}2}\frac{\cos s}sds\r)
    \le
    3^{2\beta},\text{ if }0\le t\le y\le\infty,
\end{equation}
one gets the following bound for the solution.
\begin{equation*}
    \|u_n(\varepsilon,f)\|
    \le3^{2\beta}\exp\l(3^{2\beta}\sum_{k=1}^{n-1}\left\|V^{(3)}_k(\varepsilon)+R^{(7)}_k(\varepsilon)\right\|\r)\|f\|.
\end{equation*}
In other terms,
\begin{equation}\label{estimate of the product}
    \left\|\prod_{k=1}^nB_k(\varepsilon)\right\|\le3^{3\beta}\exp\left(3^{3\beta}
    \sum_{k=1}^n\left\|V^{(3)}_k(\varepsilon)+R^{(7)}_k(\varepsilon)\right\|\right).
\end{equation}
Furthermore, the identity \eqref{V^{(3)}} leads to the following estimate:
\begin{equation*}
    \left\|V^{(3)}_n(\varepsilon)\right\|\le\frac{5\beta}2|\varepsilon|\text{ for every }n\in\mathbb N\text{ and }\varepsilon\in U,
\end{equation*}
which can be easily obtained with the help of the equality \eqref{equality for the integral} and explicit bounds $\left|\frac{\cos\varepsilon-1}{\varepsilon}\right|\le1$,
$1-\frac{\sin\varepsilon}{\varepsilon}\le\frac{|\varepsilon|}2$.
Thus, if $n\le\frac{c_2}{|\varepsilon|}$ (where $c_2$ is defined by \eqref{c-2}), then
\begin{equation}\label{sum of V-3+R-7}
    \sum_{k=1}^n\left\|V^{(3)}_k(\varepsilon)+R^{(7)}(\varepsilon)\right\|\le\frac{5\beta c_2}2+\sum_{k=1}^{\infty}\left\|R^{(7)}(\varepsilon)\right\|
\end{equation}
including the case of $\varepsilon=0, n=\infty$.

The estimate \eqref{estimate of the product} grows to infinity with $n$, therefore the tail of the matrix product ought to be considered separately. If $\varepsilon\neq0$ and $n>\frac{c_2}{|\varepsilon|}$, one obtains from the equality $B^{(1)}_n=I+V^{(2)}_n+R^{(6)}_n$, see \eqref{V-2,R-6}:
\begin{equation*}
    \left\|\prod_{k=\left\lfloor\frac{c_2}{|\varepsilon|}\right\rfloor+1}^nB^{(1)}_k(\varepsilon)\right\|
    <\exp\left(\sum_{k=\left\lfloor\frac{c_2}{|\varepsilon|}\right\rfloor+1}^n
    \left\|V^{(2)}_k(\varepsilon)+R^{(6)}_k(\varepsilon)\right\|\right).
\end{equation*}
Estimating the exponent in the same way as in the previous lemma one gets:
\begin{equation}\label{sum of V-2+R-6}
    \sum_{k=\left\lfloor\frac{c_2}{|\varepsilon|}\right\rfloor+1}^n
    \left\|V^{(2)}_k(\varepsilon)+R^{(6)}_k(\varepsilon)\right\|
    <\frac{16\beta^2|\varepsilon|}{c_2\left|\sin\frac{\varepsilon}2\right|}
    +3\sum_{k=1}^{\infty}\left\|R^{(5)}_k(\varepsilon)\right\|.
\end{equation}
Since both expressions \eqref{sum of V-3+R-7} and \eqref{sum of V-2+R-6} are bounded uniformly with respect to $\varepsilon$, $B_n=\bigl(I-Q^{(2)}_{n+1}\bigr)B^{(1)}_n\bigl(I-Q^{(2)}_n\bigr)^{-1}$ and $\|Q^{(2)}_n\|<1/2$, the product $\prod_{k=\left\lfloor\frac{c_2}{|\varepsilon|}\right\rfloor+1}^nB_k(\varepsilon)$ is bounded for $n>\frac{c_2}{|\varepsilon|}$ uniformly with respect to $\varepsilon$. Assertion of the lemma follows.
\end{proof}

\subsection*{Step II. Rewriting the system $u_{n+1}=B_n(\varepsilon)u_n$ in the form of a Volterra integral equation in the slow scale}
Consider the equation \eqref{equation for u after the variation of parameters},
\begin{multline*}
    u_n(\varepsilon,f)=
    \left(%
    \begin{array}{cc}
    1 & 0 \\
    0 & \exp\l(-2{\beta}\int_1^n\frac{\cos(\varepsilon r)}rdr\r) \\
    \end{array}%
    \right)
    f
    \\
    +\sum_{k=1}^{n-1}
    \left(%
    \begin{array}{cc}
    1 & 0 \\
    0 & \exp\l(-2{\beta}\int_{k+1}^n\frac{\cos(\varepsilon r)}rdr\r) \\
    \end{array}%
    \right)
    \left(V^{(3)}_k(\varepsilon)+R^{(7)}_k(\varepsilon)\right)u_k(\varepsilon,f).
\end{multline*}
which is equivalent to the system $u_{n+1}=B_n(\varepsilon)u_n$. On this step we rewrite it in an integral operator form. Fix an $\varepsilon\in U\backslash\{0\}$. Put again $n=\left\lfloor\frac y{|\varepsilon|}\right\rfloor$ and divide the sum in \eqref{equation for u after the variation of parameters} into two sums, which contain terms $R^{(7)}$ and $V^{(3)}$, respectively. Then write the second sum of the two as an integral in a new variable $\tau$ putting $k=\lfloor\tau\rfloor$. Doing this we get for $y\ge|\varepsilon|$, since a piece-wise constant function appears in the second sum:
\begin{multline}\label{equation for u with integral}
    u_{\left\lfloor\frac y{|\varepsilon|}\right\rfloor}(\varepsilon)=
    \left(%
    \begin{array}{cc}
    1 & 0 \\
    0 & \exp\l(-2{\beta}\int_1^{\left\lfloor\frac y{|\varepsilon|}\right\rfloor}\frac{\cos(\varepsilon r)}rdr\r) \\
    \end{array}%
    \right)
    f
    \\
    +\sum_{k=1}^{\left\lfloor\frac y{|\varepsilon|}\right\rfloor-1}
    \left(%
    \begin{array}{cc}
    1 & 0 \\
    0 & \exp\l(-2{\beta}\int_{k+1}^{\left\lfloor\frac y{|\varepsilon|}\right\rfloor}\frac{\cos(\varepsilon r)}rdr\r) \\
    \end{array}%
    \right)
    R^{(7)}_k(\varepsilon)u_k(\varepsilon)
    \\
    +\int_1^{\left\lfloor\frac y{|\varepsilon|}\right\rfloor}
    \left(%
    \begin{array}{cc}
    1 & 0 \\
    0 & \exp\l(-2{\beta}\int_{\lfloor\tau\rfloor+1}^{\left\lfloor\frac y{|\varepsilon|}\right\rfloor}\frac{\cos(\varepsilon r)}rdr\r) \\
    \end{array}%
    \right)
    V^{(3)}_{\lfloor\tau\rfloor}(\varepsilon)u_{\lfloor\tau\rfloor}(\varepsilon)d\tau.
\end{multline}
Scaling the variable of the integration $\tau=\frac t{|\varepsilon|}$ we write this as a Volterra integral equation:
\begin{equation}\label{equation for z}
    z(y)=g(y)+\int_0^yK(y,t)z(t)dt.
\end{equation}
Here we denote by $z,g$ and $K$ the following piecewise-constant (on intervals of length $\varepsilon$) functions:
\begin{equation*}
    z(y,\varepsilon,f):=u_{\left\lfloor\frac y{|\varepsilon| }\right\rfloor}(\varepsilon,f),\text{ if }y\ge|\varepsilon|,
\end{equation*}
\begin{multline}\label{g}
    g(y,\varepsilon,f):=
    \left(%
    \begin{array}{cc}
    1 & 0 \\
    0 & \exp\l(-2{\beta}\int_{|\varepsilon|}^{|\varepsilon|\left\lfloor\frac y{|\varepsilon|}\right\rfloor}\frac{\cos s}sds\r) \\
    \end{array}%
    \right)
    f
    \\
    +
    \sum_{k=1}^{\left\lfloor\frac y{|\varepsilon|}\right\rfloor-1}
    \left(%
    \begin{array}{cc}
    1 & 0 \\
    0 & \exp\l(-2{\beta}\int_{k+1}^{\left\lfloor\frac y{|\varepsilon|}\right\rfloor}\frac{\cos(\varepsilon r)}rdr\r) \\
    \end{array}%
    \right)
    R^{(7)}_k(\varepsilon)u_k(\varepsilon,f),\text{ if }y\ge|\varepsilon|,
\end{multline}
\begin{equation*}
    K(y,t,\varepsilon):=
    \left(%
    \begin{array}{cc}
    1 & 0\\
    0 & \exp\l(-2{\beta}
    \int_{|\varepsilon|\l(\left\lfloor\frac t{|\varepsilon|}\right\rfloor+1\r)}^{|\varepsilon|\left\lfloor\frac y{|\varepsilon|}\right\rfloor}\frac{\cos s}sds\r)\\
    \end{array}%
    \right)
    \frac{V^{(3)}_{\left\lfloor\frac t{|\varepsilon|}\right\rfloor}(\varepsilon)}{|\varepsilon|},
    \text{ if }|\varepsilon|\le t<|\varepsilon|\left\lfloor\frac y{|\varepsilon|}\right\rfloor.
\end{equation*}
For all the other values of $y$ (and $t$, if it is present) define these functions to be equal zero. After we have done this, we can successfully use standard operator methods. Before doing that let us observe the point-wise convergence of the kernel $K$ and the "free term" $g$ of the integral equation \eqref{equation for z}. It is easy to see from the definition
\begin{equation*}
    V^{(3)}_n(\varepsilon)=\frac{\beta}n\int_n^{n+1}\cos(\varepsilon r)dr
    \left(
      \begin{array}{cc}
        -1 & 0 \\
        0 & 1 \\
      \end{array}
    \right)
    +\frac{\beta}n
    \left(
      \begin{array}{cc}
        \cos(\varepsilon n) & \sin(\varepsilon n) \\
        \sin(\varepsilon n) & -\cos(\varepsilon n) \\
      \end{array}
    \right)
\end{equation*}
that
\begin{equation*}
    \frac{V^{(3)}_{\left\lfloor\frac y{|\varepsilon|}\right\rfloor}(\varepsilon)}{\varepsilon}\rightarrow\frac{\beta\sin y}y
    \left(
      \begin{array}{cc}
        0 & 1 \\
        1 & 0 \\
      \end{array}
    \right)
    \text{ as }\varepsilon\rightarrow0
\end{equation*}
and hence for $y>t>0$
\begin{equation}\label{convergence of the kernel}
    K(y,t,\varepsilon)\rightarrow\pm
    \left(
      \begin{array}{cc}
        0 & 1 \\
        \exp\left(-2\beta\int_t^y\frac{\cos s}sds\right) & 0 \\
      \end{array}
    \right)
    \frac{\beta\sin t}t\text{ as }\varepsilon\rightarrow\pm0.
\end{equation}
Additionally, for every $y>0$
\begin{equation}\label{convergence of the free term}
    g(y,\varepsilon)\rightarrow
    \left(%
    \begin{array}{cc}
    1 & 0 \\
    0 & 0 \\
    \end{array}%
    \right)
    \l(f+\sum_{k=1}^{\infty}R^{(7)}_k(0)u_k(0,f)\r)\text{ as }\varepsilon\rightarrow0.
\end{equation}
This follows from the uniform bound given by Lemma \ref{lem a priori estimate} and from the property
\linebreak
$\{R^{(7)}_n(\varepsilon)\}_{n=1}^{\infty}\in l^1(U)$ by Lebesgue dominated convergence theorem.

    \begin{rem}\label{rem very important}
    It is very important  that the limit in \eqref{convergence of the free term} coincides with the $\lim\limits_{n\rightarrow\infty}u_n(0,f)$. The latter can be obtained from the following variant of discrete Levinson theorem \cite[Lemma 4.4, case (b)]{Janas-Simonov-2010}.
    \end{rem}

    \begin{prop}[\cite{Janas-Simonov-2010}]\label{proposition from Janas-Simonov}
    Suppose that $\sum\limits_{k=1}^{\infty}\frac{\|\hat R_k\|}{\left|\hat\lambda_k\right|}<\infty$ and that there exist an $M$ such that for every $m\ge n$ the estimate $\prod\limits_{l=n+1}^m|\hat\lambda_l|\ge\frac1M$ holds. Let $\hat u$ be a solution of the system
    \begin{equation*}
        \hat u_{n+1}=\left[
        \left(%
            \begin{array}{cc}
            \hat\lambda_n & 0 \\
            0 & 1/{\hat\lambda_n} \\
            \end{array}%
        \right)
        +\hat R_n\right]\hat u_n.
    \end{equation*}
    If $\prod\limits_{l=1}^{\infty}|\hat\lambda_l|=\infty$, then both sides of the following equality exist and the equality holds:
    \begin{equation*}
        \lim\limits_{n\rightarrow\infty}
        \frac{\hat u_n}{\prod\limits_{l=1}^{n-1}\hat\lambda_l}
        =
        \left(%
            \begin{array}{cc}
            1 & 0 \\
            0 & 0 \\
            \end{array}%
        \right)
        \left[
        \hat u_1+\sum\limits_{k=1}^{\infty}
        \frac{\hat R_k\hat u_k}{\prod\limits_{l=1}^k\hat\lambda_l}\right].
    \end{equation*}
    \end{prop}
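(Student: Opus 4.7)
My plan is to combine a discrete variation-of-parameters formula with an a priori bound $\|\hat u_n\|\le C|\Lambda_n|$, where $\Lambda_n:=\prod_{l=1}^{n-1}\hat\lambda_l$, and then extract the limit from a telescoping series. I will set $P_n:=\mathrm{diag}(\Lambda_n,\Lambda_n^{-1})$ and perform variation of parameters around the unperturbed diagonal system $\hat u_{n+1}^{(0)}=\mathrm{diag}(\hat\lambda_n,1/\hat\lambda_n)\hat u_n^{(0)}$ to obtain
\[
\hat u_n=P_n\hat u_1+\sum_{k=1}^{n-1}P_nP_{k+1}^{-1}\hat R_k\hat u_k.
\]
Writing $\hat v_n:=\hat u_n/\Lambda_n$ then separates the two components:
\[
(\hat v_{n+1})_1=(\hat v_n)_1+\frac{(\hat R_n\hat u_n)_1}{\Lambda_{n+1}},\qquad(\hat v_{n+1})_2=\frac{(\hat v_n)_2}{\hat\lambda_n^2}+\frac{(\hat R_n\hat u_n)_2}{\Lambda_{n+1}}.
\]
The first is a telescoping recursion whose limit will be exactly the projection of the claimed formula onto the first coordinate.

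To establish the a priori bound $\|\hat u_n\|\le C|\Lambda_n|$ I will use that the tail hypothesis $\prod_{l=n+1}^m|\hat\lambda_l|\ge 1/M$ translates into $|\Lambda_m/\Lambda_n|\ge 1/M$ for $m\ge n$, which controls $\|P_nP_{k+1}^{-1}\|/|\Lambda_n|$. Inserting this into the variation-of-parameters formula and splitting the summation according to whether $|\Lambda_n/\Lambda_{k+1}|$ is above or below $M$, one arrives at a discrete Gronwall inequality for $x_n:=\|\hat u_n\|/|\Lambda_n|$ whose driving term is summable of order $\|\hat R_k\|/|\hat\lambda_k|$, and the weighted summability hypothesis then closes the estimate.

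With this bound the telescoping series for $(\hat v_n)_1$ converges absolutely, since $\sum_k|(\hat R_k\hat u_k)_1|/|\Lambda_{k+1}|\le C\sum_k\|\hat R_k\|/|\hat\lambda_k|<\infty$. For $(\hat v_n)_2$ I will iterate the second recursion from index $N$ to obtain
\[
(\hat v_n)_2=\frac{(\hat v_N)_2}{\prod_{l=N}^{n-1}\hat\lambda_l^2}+\sum_{k=N}^{n-1}\frac{(\hat R_k\hat u_k)_2}{\Lambda_{k+1}\prod_{l=k+1}^{n-1}\hat\lambda_l^2};
\]
the first term vanishes as $n\to\infty$ by the hypothesis $\prod|\hat\lambda_l|=\infty$, while the sum is dominated by $CM^2\sum_{k\ge N}\|\hat R_k\|/|\hat\lambda_k|$, which is arbitrarily small for large $N$. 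Hence $(\hat v_n)_2\to 0$, and assembling the two components yields the claimed rank-one limit, with the outer $\mathrm{diag}(1,0)$ reflecting the vanishing of the second component.

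The hard part will be the a priori bound: a naive one-step estimate $\|\hat u_{n+1}\|\le(|\hat\lambda_n|+\|\hat R_n\|)\|\hat u_n\|$ gives the correct scaling $|\Lambda_n|$ only when $|\hat\lambda_n|\ge 1$, whereas the hypotheses permit $|\hat\lambda_l|<1$ at individual indices. It is precisely the tail-product lower bound that compensates for this, preventing cumulative shrinkage of $|\Lambda_n|$ when isolated factors are small, and combining it with the weighted summability $\sum\|\hat R_k\|/|\hat\lambda_k|<\infty$ in a single Gronwall step is where all three hypotheses must conspire.
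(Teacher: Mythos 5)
Your proof is correct and self-contained. The paper itself does not prove Proposition~\ref{proposition from Janas-Simonov}; it is quoted with a citation to \cite{Janas-Simonov-2010} (Lemma~4.4, case~(b) there), so there is no in-paper proof to compare against. Your argument is a clean instance of the standard discrete Levinson-theorem technique, and each of its four steps holds up: (i) the discrete variation-of-parameters formula $\hat u_n = P_n\hat u_1 + \sum_{k=1}^{n-1}P_nP_{k+1}^{-1}\hat R_k\hat u_k$ with $P_n=\mathrm{diag}(\Lambda_n,\Lambda_n^{-1})$ is exactly right; (ii) for the a~priori bound, the tail-product hypothesis gives $|\Lambda_n/\Lambda_{k+1}|\ge 1/M$ for $n\ge k+1$, and a short computation shows $\|P_nP_{k+1}^{-1}\|/|\Lambda_n|\le M^2/|\Lambda_{k+1}|$ \emph{without any case split}, since $\max(|\Lambda_n/\Lambda_{k+1}|,|\Lambda_{k+1}/\Lambda_n|)/|\Lambda_n|\le M^2/|\Lambda_{k+1}|$ follows directly from $|\Lambda_n|\ge|\Lambda_{k+1}|/M$; combined with $\|\hat R_k\|\|\hat u_k\|/|\Lambda_{k+1}| = (\|\hat R_k\|/|\hat\lambda_k|)\,x_k$ this sets up the discrete Gronwall inequality with summable driver, giving $x_n:=\|\hat u_n\|/|\Lambda_n|\le C$; (iii) the telescoping series for the first component then converges absolutely to $(\hat u_1)_1+\sum_k(\hat R_k\hat u_k)_1/\Lambda_{k+1}$; and (iv) for the second component, the tail-product bound makes $\prod_{l=k+1}^{n-1}|\hat\lambda_l|^2\ge M^{-2}$, so the inhomogeneous sum from $N$ is $O\bigl(\sum_{k\ge N}\|\hat R_k\|/|\hat\lambda_k|\bigr)$ while the homogeneous part dies because $|\Lambda_n|\to\infty$; thus $(\hat v_n)_2\to0$, yielding exactly the stated rank-one limit.
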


In the case of our system $u_{n+1}=B_n(\varepsilon)u_n$, one can successfully apply this argument, putting $\hat u_n=n^{\beta}u_n(0,f)$, $\hat\lambda_n=\left(\frac{n+1}n\right)^{\beta}$ and $\hat R_n=\left(\frac{n+1}n\right)^{\beta}R^{(7)}_n(0)$.

\subsection*{Step III. Convergence in the slow scale}
Now we establish the convergence as $\varepsilon\rightarrow\pm0$ of the solution $u(\varepsilon)$ in the slow scale. We remind that in our notation $u_{\left\lfloor\frac y{|\varepsilon|}\right\rfloor}(\varepsilon)=z(y,\varepsilon)$.

    \begin{lem}\label{lem convergence in the slow scale}
    For every $y>0$ and $f\in\mathbb C^2$ there exist two limits
    \begin{equation*}
        h_{\pm}(y,f):=\lim_{\varepsilon\rightarrow\pm0}z(y,\varepsilon,f),
    \end{equation*}
    which satisfy the following integral equations:
    \begin{equation}\label{equations for h-pm}
        h_{\pm}(y,f)=\lim_{n\rightarrow\infty}u_n(0,f)
        \pm\int_0^y
        \left(
          \begin{array}{cc}
            0 & 1 \\
            \exp\left(-2\beta\int_t^y\frac{\cos s}sds\right) & 0 \\
          \end{array}
        \right)
        \frac{\beta\sin t}th_{\pm}(t,f)dt.
    \end{equation}
    \end{lem}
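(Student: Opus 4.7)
The plan is to pass to the limit $\varepsilon\to\pm0$ directly in the Volterra integral equation \eqref{equation for z} satisfied by $z(y,\varepsilon,f)$, combining the pointwise convergences \eqref{convergence of the kernel} and \eqref{convergence of the free term} with the uniform a priori bound of Lemma \ref{lem a priori estimate} and a Gronwall argument. First I would check that the limiting equation \eqref{equations for h-pm} admits a unique continuous solution $h_{\pm}(\cdot,f)$ on $[0,\infty)$: its kernel (the matrix-valued function appearing under the integral in \eqref{equations for h-pm}) is continuous and bounded on $[0,y]\times[0,y]$ for every fixed $y>0$, because the apparent singularity at $t=0$ is cancelled by the factor $\sin t/t$, so Neumann iteration of the corresponding Volterra operator converges uniformly on compacta.

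Next I would establish a uniform-in-$\varepsilon$ integrable majorant $M(t)$ for the kernel $K(y,t,\varepsilon)$ on $[0,y]$. For $t\ge|\varepsilon|$, writing $n:=\lfloor t/|\varepsilon|\rfloor\ge1$ and using the explicit form of $V^{(3)}_n(\varepsilon)$ displayed in the proof of Lemma \ref{lem a priori estimate} together with the bounds $|\cos(\varepsilon n)-\int_n^{n+1}\cos(\varepsilon r)dr|=O(|\varepsilon|)$ and $|\sin(\varepsilon n)|/n\le\min(|\varepsilon|,1/n)$, one obtains $\|V^{(3)}_n(\varepsilon)\|/|\varepsilon|\le C\bigl(\beta|\varepsilon|/t+\beta\min(1,1/t)\bigr)$ after passing to the variable $t$. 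Combined with the exponential bound \eqref{estimate of the oscillating integral}, this yields such an $M(t)$, independent of $\varepsilon\in U\backslash\{0\}$, and moreover $\int_0^y\|K(y,t,\varepsilon)\|\,dt\le C(y)$ uniformly.

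With these ingredients in place, I would set $\Delta(y,\varepsilon):=z(y,\varepsilon,f)-h_{\pm}(y,f)$ and subtract \eqref{equations for h-pm} from \eqref{equation for z} to obtain
\begin{equation*}
\Delta(y,\varepsilon)=\bigl(g(y,\varepsilon,f)-\lim_{n\to\infty}u_n(0,f)\bigr)+\int_0^y K(y,t,\varepsilon)\Delta(t,\varepsilon)\,dt+\int_0^y\bigl(K(y,t,\varepsilon)-K_{\pm}(y,t)\bigr)h_{\pm}(t,f)\,dt,
\end{equation*}
where $K_{\pm}$ is the limiting kernel appearing in \eqref{equations for h-pm}. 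The first parenthesis tends to zero by \eqref{convergence of the free term} and Remark \ref{rem very important}; the last integral tends to zero for each fixed $y$ by Lebesgue's dominated convergence theorem, using \eqref{convergence of the kernel}, the dominant $M$, and the continuity of $h_{\pm}$. Gronwall's inequality applied to $\|\Delta(y,\varepsilon)\|$, via the uniform bound on $\int_0^y\|K(y,t,\varepsilon)\|\,dt$, then yields $\|\Delta(y,\varepsilon)\|\to0$ as $\varepsilon\to\pm0$ pointwise in $y$. This simultaneously gives the existence of $h_{\pm}(y,f)$ as the sought limit and its characterization by \eqref{equations for h-pm}.

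The main obstacle I anticipate is the construction of the $\varepsilon$-independent integrable majorant $M(t)$ near $t=0$: the $1/|\varepsilon|$ prefactor in $K$ must be absorbed by the cancellations in $V^{(3)}_n(\varepsilon)$, and these only happen thanks to the particular choice of the scaling factor $\exp\bigl(\beta\int_1^n\cos(\varepsilon r)/r\,dr\bigr)$ in the substitution $v_n\mapsto u_n$ that defines $B_n$. Once this bound is secured, the remainder of the argument is a routine dominated-convergence plus Gronwall manipulation.
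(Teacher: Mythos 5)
Your proposal is essentially correct and takes a route that differs from the paper's in a meaningful way. The paper works in $L_{\infty}((0;y_0),\mathbb C^2)$ and proves \emph{operator-norm} convergence $\mathcal K_{y_0}(\varepsilon)\to\mathcal K_{y_0}(\pm0)$ (which requires showing \eqref{what we really need to prove}, i.e.\ that $\max_{y\in[0;y_0]}\int_0^y\|K(y,t,\varepsilon)-K(y,t,\pm0)\|\,dt\to0$, via the compactness/uniform-continuity split into $[0,y_1(\Delta)]$ and $[y_1(\Delta),y_0]$); it then isolates $z-g$ in the operator identity $z-g=(I-\mathcal K_{y_0}(\varepsilon))^{-1}\mathcal K_{y_0}(\varepsilon)g$ because $g(\cdot,\varepsilon)$ converges to a constant only pointwise, not in $L_{\infty}$. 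You instead fix $y$ and run a Gronwall argument on $\Delta(y,\varepsilon)=z(y,\varepsilon,f)-h_{\pm}(y,f)$, using dominated convergence for the two forcing terms. This is more elementary and avoids the functional-analytic framework; the trade-off is that you only get pointwise convergence in $y$, while the paper gets $L_{\infty}((0;y_0))$-convergence of $z-g$ as a by-product — but only pointwise convergence is asserted in the lemma, and the uniformity needed later (Lemma \ref{lem equality of the limits}) comes from Lemma \ref{lem convergence of the tail}, not from here, so nothing is lost.

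Two small points worth tightening. First, your ``main obstacle'' — the $\varepsilon$-independent integrable majorant for $K$ — is actually already resolved in the paper: the estimate $\|V^{(3)}_n(\varepsilon)\|\le\frac{5\beta}{2}|\varepsilon|$ established in the proof of Lemma \ref{lem a priori estimate}, together with \eqref{estimate of the oscillating integral}, gives $\|K(y,t,\varepsilon)\|\le\frac{5\beta}{2}\cdot3^{2\beta}$ uniformly in all three variables, so the majorant is just a constant; the $t$-dependent refinement you sketch is not needed. Second, for the Gronwall step be precise about which bound you invoke: the Volterra version of Gronwall requires a pointwise bound $\|K(y,t,\varepsilon)\|\le b(t)$ valid for all $y\ge t$, not merely a bound on $\int_0^y\|K(y,t,\varepsilon)\|\,dt$. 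Here the constant bound just mentioned serves as $b(t)$, so the argument goes through, but as written the justification conflates the two.
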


\begin{proof}
For every $y_0>0$ and $\varepsilon\neq0$ define the operator $\mathcal K_{y_0}(\varepsilon)$ in the Banach space $L_{\infty}((0;y_0),\mathbb C^2)$ by the rule
\begin{equation}\label{K-y-0(epsilon)}
    \mathcal K_{y_0}(\varepsilon): u(y)\mapsto\int_0^yK(y,t,\varepsilon)u(t)dt,\ y\in(0;y_0).
\end{equation}
Denote $K(y,t,\pm0):=\lim\limits_{\varepsilon\rightarrow\pm0}K(y,t,\varepsilon)$ and analogously define two operators $\mathcal K_{y_0}(\pm0)$. We consider only the case $\varepsilon\rightarrow+0$ here. The second case can be treated in the same way.

First let us see that the operator $\mathcal K_{y_0}(\varepsilon)$ converges to $\mathcal K_{y_0}(+0)$ as $\varepsilon\rightarrow+0$ in the space of bounded linear operators in $L_{\infty}((0;y_0),\mathbb C^2)$. It is enough to show that
\begin{equation}\label{what we really need to prove}
    \max_{y\in[0;y_0]}\int_0^y\|K(y,t,\varepsilon)-K(y,t,+0)\|dt\rightarrow0\text{ as }\varepsilon\rightarrow+0.
\end{equation}
Fix a $\Delta>0$. Since both kernels are uniformly bounded in all variables, there exists a $y_1(\Delta)<y_0$ such that for every positive $\varepsilon\in U$
\begin{equation*}
    \max_{y\in[0;y_1(\Delta)]}\int_0^y\|K(y,t,\varepsilon)-K(y,t,+0)\|dt<\Delta.
\end{equation*}
For the same reason
\begin{multline}\label{cropping the epsilon}
    \max_{y\in[y_1(\Delta);y_0]}\int_0^y\|K(y,t,\varepsilon)-K(y,t,+0)\|dt
    \\
    =\max_{y\in[y_1(\Delta);y_0]}\int_{\varepsilon}^{y-\varepsilon}\|K(y,t,\varepsilon)-K(y,t,+0)\|dt
    +O(\varepsilon)\text{ as }\varepsilon\rightarrow+0.
\end{multline}
Using the estimates $\frac{1-\cos\varepsilon}{\varepsilon}=O(\varepsilon)$ and $1-\frac{\sin\varepsilon}{\varepsilon}=O(\varepsilon^2)$ as $\varepsilon\rightarrow0$, one can write
\begin{equation*}
    K(y,t,\varepsilon)=
    \left(
      \begin{array}{cc}
        0 & 1 \\
        \exp\l(-2{\beta}
        \int_{\varepsilon\l(\left\lfloor\frac t{\varepsilon}\right\rfloor+1\r)}^{\varepsilon\left\lfloor\frac y{\varepsilon}\right\rfloor}\frac{\cos s}sds\r) & 0 \\
      \end{array}
    \right)
    \frac{\beta\sin\left(\varepsilon\left\lfloor\frac t{\varepsilon}\right\rfloor\right)}{\varepsilon\left\lfloor\frac t{\varepsilon}\right\rfloor}+O(\varepsilon)\text{ as }\varepsilon\rightarrow+0
\end{equation*}
for $y$ and $t$ such that $2\varepsilon<y<y_0$ and $\varepsilon<t<y-\varepsilon$, where $O(\varepsilon)$ is uniform with respect to these $y$ and $t$. We remind that
\begin{equation*}
    K(y,t,+0)=
    \left(
      \begin{array}{cc}
        0 & 1 \\
        \exp\left(-2\beta\int_t^y\frac{\cos s}sds\right) & 0 \\
      \end{array}
    \right)
    \frac{\beta\sin t}t.
\end{equation*}
The mapping $(y;t)\mapsto\exp\left(-2\beta\int_t^y\frac{\cos sds}s\right)$ is uniformly continuous on the compact set $y_1(\Delta)\le y\le y_0$, $0\le t\le y$ (notice that it has a discontinuity at the point $y=t=0$), the function $t\mapsto\frac{\sin t}t$ is uniformly continuous in the interval $[0;y_0]$. Therefore
\linebreak
$\max_{t\in[\varepsilon;y-\varepsilon]}\|K(y,t,\varepsilon)-K(y,t,+0)\|=o(1)$ as $\varepsilon\rightarrow+0$ uniformly in $y\in[y_1(\Delta);y_0]$. Together with \eqref{cropping the epsilon} this means that for sufficiently small positive values of $\varepsilon$ the maximum
\begin{equation*}
    \max_{y\in[y_1(\Delta);y_0]}\int_0^y\|K(y,t,\varepsilon)-K(y,t,+0)\|dt<\Delta,
\end{equation*}
and hence the maximum in \eqref{what we really need to prove} has the same property. Since $\Delta$ is an arbitraty positive number, the convergence of operators $\mathcal K_{y_0}(\varepsilon)$ follows.

Secondly, as mentioned in Remark \ref{rem very important}, for every $y>0$ one has:
\begin{equation*}
    g(y,\varepsilon)\rightarrow\lim_{n\rightarrow\infty}u_n(0)\text{ as }\varepsilon\rightarrow0.
\end{equation*}
We face an obstacle here: this convergence is only point-wise and is not in the norm of $L_{\infty}((0;y_0),\mathbb C^2)$. The same is true for the convergence of $z(y,\varepsilon)$, which we are going to prove. There is no sense in directly applying the inverse of the operator $I-\mathcal K_{y_0}(\varepsilon)$. The proper object to consider is the difference $z(y,\varepsilon)-g(y,\varepsilon)$, which converges in $L_{\infty}((0;y_0),\mathbb C^2)$. To see this let us rewrite the equation \eqref{equation for z} in the following form:
\begin{equation}\label{equation for z proper form}
    z(\varepsilon)-g(\varepsilon)=(I-\mathcal K_{y_0}(\varepsilon))^{-1}\mathcal K_{y_0}(\varepsilon)g(\varepsilon)
\end{equation}
(since $\mathcal K_{y_0}(\varepsilon)$ is a Volterra integral operator, $(I-\mathcal K_{y_0}(\varepsilon))^{-1}$ exists). It suffices to prove that $K_{y_0}(+0)g(\varepsilon)\rightarrow K_{y_0}(+0)\lim_{n\rightarrow\infty}u_n(0)$ in $L_{\infty}((0;y_0),\mathbb C^2)$. By a direct estimate we have:
\begin{equation*}
    \max_{y\in[0;y_0]}\int_0^y
    \left\|K(y,t,+0)\left(g(t,\varepsilon)-\lim_{n\rightarrow\infty}u_n(0)\right)\right\|dt
    \le
    \beta3^{2\beta}\int_0^{y_0}\|g(t,\varepsilon)-\lim_{n\rightarrow\infty}u_n(0)\|dt.
\end{equation*}
The right-hand side tends to zero as $\varepsilon\rightarrow+0$ due to the point-wise convergence and the uniform boundedness of $g(t,\varepsilon)$ by Lebesgue dominated convergence theorem. From \eqref{equation for z proper form} we obtain:
\begin{equation*}
    z(\varepsilon)-g(\varepsilon)\rightarrow(I-\mathcal K_{y_0}(+0))^{-1}\mathcal K_{y_0}(+0)\lim_{n\rightarrow\infty}u_n(0)\text{ as }\varepsilon\rightarrow+0\text{ in }L_{\infty}((0;y_0),\mathbb C^2)
\end{equation*}
From the point-wise convergence of $g(y,\varepsilon)$ it follows that for every $y>0$
\begin{equation*}
    z(y,\varepsilon)\rightarrow
    \left(\left(I+(I-\mathcal K_{y_0}(+0))^{-1}\mathcal K_{y_0}(+0)\right)\lim_{n\rightarrow\infty}u_n(0)\right)(y)=:h_+(y).
\end{equation*}
Consider the integral equation \eqref{equation for z} again. Taking the limit as $\varepsilon\rightarrow+0$ turns it into the equation for $h_+$, \eqref{equations for h-pm}. This is possible due to the uniform boundedness of the solution $z$ and of the kernel $K$ by Lebesgue dominated convergence theorem. This completes the proof.
\end{proof}

    \begin{rem}
    The assertion of the last lemma includes the case when $\lim_{n\rightarrow\infty}u_n(0,f)=0$. In this case $h_{\pm}(y,f)\equiv0$. This happens only for one particular direction of the vector $f\in\mathbb C^2$, because the rank of $\Theta$ is one.
    \end{rem}

\subsection*{Step IV. Convergence as $y\rightarrow+\infty$ and changing the order of limits}
It remains to show that every solution of integral equations \eqref{equations for h-pm} has a limit at infinity and to see that it is possible to change the order of limits in
$\lim\limits_{y\rightarrow+\infty}\lim\limits_{\varepsilon\rightarrow\pm0}
u_{\left\lfloor\frac y{|\varepsilon|}\right\rfloor}(\varepsilon,f)$.

    \begin{lem}\label{lem convergence of h-pm}
    The following two limits exist: $\lim\limits_{y\rightarrow+\infty}h_{\pm}(y,f)$. These limits are non-zero, iff $\lim\limits_{n\rightarrow\infty}u_n(0,f)\neq0$.
    \end{lem}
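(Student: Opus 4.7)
The existence of $\lim_{y\to+\infty}h_\pm(y,f)$ and the identity \eqref{desired limit} will follow from the uniform tail estimate of Lemma~\ref{lem convergence of the tail} combined with the a priori bound of Lemma~\ref{lem a priori estimate}. Writing
\[
    \lim_{m\to\infty}u_m(\varepsilon,f)-u_{\lfloor y/|\varepsilon|\rfloor}(\varepsilon,f)=\Bigl(\,\prod_{k\geq \lfloor y/|\varepsilon|\rfloor}B_k(\varepsilon)-I\Bigr)\,u_{\lfloor y/|\varepsilon|\rfloor}(\varepsilon,f),
\]
the right-hand side tends to $0$ as $y\to+\infty$ uniformly in $\varepsilon\in U\setminus\{0\}$, so $\{u_{\lfloor y/|\varepsilon|\rfloor}(\varepsilon,f)\}_y$ is uniformly Cauchy. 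Passing $\varepsilon\to\pm 0$ yields the Cauchy property of $h_\pm(\cdot,f)$, hence the existence of its limit; a routine interchange-of-limits argument with the same uniform bound then delivers \eqref{desired limit}.

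If $\lim_{n\to\infty}u_n(0,f)=0$, then by Volterra uniqueness $h_\pm\equiv 0$ in \eqref{equations for h}, so the limit vanishes. For the converse, Remark~\ref{rem about Theta} combined with the discrete Levinson asymptotics of $B_n(0)$ shows that $\Theta$ has rank one with image $\text{span}\{(1,0)^T\}$; it therefore suffices to prove $h_\pm^{(1)}(+\infty)\neq 0$ for the solution $h_\pm^{(1)}$ of \eqref{equations for h} with $h_0=(1,0)^T$. Since $h_{0,2}=0$, a direct differentiation of \eqref{equations for h} shows that $h_\pm^{(1)}$ satisfies the homogeneous ODE \eqref{equations for h-pm differential} on $(0,+\infty)$.

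The strategy for the last step is: (i) diagonalize the oscillatory matrix $M(y)=\bigl(\begin{smallmatrix}0&\sin y\\\sin y&-2\cos y\end{smallmatrix}\bigr)$ via the orthogonal rotation $U(y)$ whose columns are its normalized eigenvectors $(\cos(y/2),\sin(y/2))^T$ and $(\sin(y/2),-\cos(y/2))^T$; (ii) remove the resulting constant skew-symmetric term (eigenvalues $\pm i/2$) by a constant unitary $V$ diagonalizing it, followed by the rotating-frame substitution $\text{diag}(e^{iy/2},e^{-iy/2})$, arriving at an equation $\check h'=\pm(\beta/y)\hat M(y)\check h$ whose coefficient is purely oscillatory of frequencies $\pm 1$ with $O(1/y)$ amplitude; (iii) apply a Harris-Lutz transformation on $[y_0,+\infty)$ (using the conditional convergence of $\int^\infty e^{\pm is}/s\,ds$) and invoke Levinson to conclude that the propagator $\check h(y_0)\mapsto\check h(+\infty)$ admits a limit that is invertible (by Liouville's formula its determinant equals $\exp(\mp 2\beta\int_{y_0}^y\cos s/s\,ds)$, which tends to a non-zero constant). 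A direct computation shows that the product $U(y)V\,\text{diag}(e^{iy/2},e^{-iy/2})$ simplifies to the constant invertible matrix $\frac{1}{\sqrt{2}}\bigl(\begin{smallmatrix}1&1\\i&-i\end{smallmatrix}\bigr)$, so the map $h_\pm^{(1)}(y_0)\mapsto h_\pm^{(1)}(+\infty)$ is also invertible. Backward uniqueness for the regular ODE on $(0,y_0]$ then forces $h_\pm^{(1)}(y_0)\neq 0$ for every $y_0>0$ (otherwise $h_\pm^{(1)}\equiv 0$ on $(0,y_0]$, contradicting $h_\pm^{(1)}(0^+)=(1,0)^T$), whence $h_\pm^{(1)}(+\infty)\neq 0$. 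The hard part is step (iii): tracking the specific cancellation of the rotating-frame phases on $[y_0,+\infty)$ to produce an invertible propagator limit.
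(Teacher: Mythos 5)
Your proof is correct, but it takes a genuinely different route from the paper at both stages. For the \emph{existence} of $\lim_{y\to+\infty}h_\pm(y,f)$ you use a uniform-Cauchy argument built on Lemmas~\ref{lem convergence of the tail} and \ref{lem a priori estimate}; the paper does not argue this way in Lemma~\ref{lem convergence of h-pm} — it derives the homogeneous ODE \eqref{differential equation for h+ with no right hand side} and reads off both existence and non-vanishing from Harris--Lutz \cite[Theorem~3.1, p.85]{Harris-Lutz-1975}, with the explicit split $\Lambda(y)=-\tfrac{2\beta\cos y}{y}\bigl(\begin{smallmatrix}0&0\\0&1\end{smallmatrix}\bigr)$, $V(y)=\tfrac{\beta\sin y}{y}\bigl(\begin{smallmatrix}0&1\\1&0\end{smallmatrix}\bigr)$, $R\equiv0$. (Your Cauchy argument is essentially the content of the subsequent Lemma~\ref{lem equality of the limits}; using it here is sound, just redundant once one has the Harris--Lutz convergence.) For \emph{non-vanishing}, the paper again simply invokes the cited Harris--Lutz theorem, whereas you reconstruct it from scratch: the reflection $U(y)$ diagonalizing $M(y)$ (with eigenvalues $1-\cos y$, $-1-\cos y$ — note that $U$ is a reflection, $\det U=-1$, not a rotation as you say, though this is immaterial), the unitary $V$, the rotating-frame factor, the Harris--Lutz transform on the purely oscillatory $O(1/y)$ residual, and the Liouville determinant $\exp(\mp2\beta\int_{y_0}^{y}\cos s/s\,ds)$. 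The computation that $U(y)V\,\mathrm{diag}(e^{\mp iy/2},e^{\pm iy/2})$ is constant is a nice observation that reduces everything to a single constant change of basis; your matrix $\tfrac{1}{\sqrt2}\bigl(\begin{smallmatrix}1&1\\i&-i\end{smallmatrix}\bigr)$ differs from mine, $\tfrac{1}{\sqrt2}\bigl(\begin{smallmatrix}1&1\\-i&i\end{smallmatrix}\bigr)$, only by sign conventions and is in any case constant and invertible, which is all that matters. The upshot: your version is self-contained at the cost of length, and it spells out precisely what the paper delegates to the cited Harris--Lutz result; the paper's version is shorter by accepting that citation at face value.
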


\begin{proof}
Consider the integral equation for $h_+$:
\begin{equation*}
    h_+(y)=\lim\limits_{n\rightarrow\infty}u_n(0,f)
    +\int_0^y
    \left(%
    \begin{array}{cc}
    0 & 1 \\
    \exp\l(-2\beta\int_t^y\frac{\cos s}sds\r) & 0 \\
    \end{array}%
    \right)
    \frac{\beta\sin t}th_+(t)dt.
\end{equation*}
Let as write it as a differential equation using the following factorization property of the matrix in the integral:
\begin{multline*}
    \left(%
    \begin{array}{cc}
    0 & 1 \\
    \exp\l(-2\beta\int_t^y\frac{\cos s}sds\r) & 0 \\
    \end{array}%
    \right)
    =
    \left(%
    \begin{array}{cc}
    1 & 0 \\
    0 & \exp\l(2\beta\int_y^{\infty}\frac{\cos s}sds\r)
    \end{array}%
    \right)
    \\
    \times
    \left(%
    \begin{array}{cc}
    0 & 1 \\
    \exp\l(-2\beta\int_t^{\infty}\frac{\cos s}sds\r) & 0 \\
    \end{array}%
    \right).
\end{multline*}
Multiplying from the left by
$\left(%
\begin{array}{cc}
1 & 0 \\
0 & \exp\l(2\beta\int_y^{\infty}\frac{\cos s}sds\r)
\end{array}%
\right)^{-1}$
and taking the derivative in $y$ one has:
\begin{equation*}
    h_+'(y)-\frac{\beta}y
    \left(
      \begin{array}{cc}
        0 & \sin y \\
        \sin y & -2\cos y \\
      \end{array}
    \right)
    h_+(y)
    =\frac{2\beta\cos y}y
    \left(
      \begin{array}{cc}
        0 & 0 \\
        0 & 1 \\
      \end{array}
    \right)
    \lim\limits_{n\rightarrow\infty}u_n(0,f).
\end{equation*}
By Lemma \ref{lem convergence of the product and degeneracy} the vector $\lim\limits_{n\rightarrow\infty}u_n(0,f)\in\mathbb C^2$ is either proportional to
$\left(
   \begin{array}{c}
     1 \\
     0 \\
   \end{array}
\right)$ or is equal to zero and therefore
\begin{equation}\label{differential equation for h+ with no right hand side}
    h_+'(y)=\frac{\beta}y
    \left(
      \begin{array}{cc}
        0 & \sin y \\
        \sin y & -2\cos y \\
      \end{array}
    \right)
    h_+(y).
\end{equation}
If $\lim\limits_{n\rightarrow\infty}u_n(0,f)\neq0$, then $h_+(y)$ is not identically zero. Every non-zero solution of \eqref{differential equation for h+ with no right hand side} has a non-zero limit as $y\rightarrow+\infty$, which follows for example from Harris-Lutz results \cite[Theorem 3.1, p.85]{Harris-Lutz-1975} (one should take there
$\Lambda(y)=-\frac{2\beta\cos y}y
\left(
  \begin{array}{cc}
    0 & 0 \\
    0 & 1 \\
  \end{array}
\right)$,
$V(y)=\frac{\beta\sin y}y
\left(
  \begin{array}{cc}
    0 & 1 \\
    1 & 0 \\
  \end{array}
\right)$,
$R(y)=0$). For the second limit (of $h_-(y,f)$) the proof is absolutely analogous.
\end{proof}

The following lemma is a combination of definitions and previous results: Lemmas \ref{lem convergence of the tail}, \ref{lem convergence in the slow scale}, \ref{lem convergence of h-pm}.

    \begin{lem}\label{lem equality of the limits}
    The following two limits exist: $\lim\limits_{\varepsilon\rightarrow\pm0}\lim\limits_{n\rightarrow\infty}u_n(\varepsilon,f)$ and equal to
    \linebreak
    $\lim\limits_{y\rightarrow+\infty}h_{\pm}(y,f)$, respectively.
    \end{lem}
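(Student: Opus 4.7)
The plan is to show that the desired interchange of limits
$$\lim_{\varepsilon\rightarrow\pm0}\lim_{n\rightarrow\infty}u_n(\varepsilon,f)=\lim_{y\rightarrow+\infty}\lim_{\varepsilon\rightarrow\pm0}u_{\left\lfloor\frac y{|\varepsilon|}\right\rfloor}(\varepsilon,f)$$
is legal, via a Moore--Osgood type argument. The inner limit on the left-hand side is, by definition of $z(y,\varepsilon,f)=u_{\left\lfloor y/|\varepsilon|\right\rfloor}(\varepsilon,f)$, the same as $\lim_{y\rightarrow+\infty}z(y,\varepsilon,f)$ for fixed $\varepsilon\neq0$. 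So both sides are iterated limits of the single two-parameter expression $z(y,\varepsilon,f)$, taken in opposite orders.

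First I would record that for every fixed $y>0$, Lemma \ref{lem convergence in the slow scale} identifies $\lim_{\varepsilon\rightarrow\pm0}z(y,\varepsilon,f)=h_\pm(y,f)$, and that by Lemma \ref{lem convergence of h-pm} the limit $\lim_{y\rightarrow+\infty}h_\pm(y,f)$ exists. This establishes one of the iterated limits. The main task is then to verify that the outer $\varepsilon$-limit of the already-existing inner $y$-limit exists and equals the same value.

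The key ingredient is uniformity in $\varepsilon$ of the convergence as $y\to+\infty$. This is delivered by Lemma \ref{lem convergence of the tail}: the tail product $\prod_{n>y/|\varepsilon|}B_n(\varepsilon)$ tends to $I$ as $y\rightarrow+\infty$ uniformly in $\varepsilon\in U\backslash\{0\}$. Combined with the a priori uniform bound of Lemma \ref{lem a priori estimate}, namely $\|u_n(\varepsilon,f)\|\le c_3\|f\|$ for all $n$ and $\varepsilon$, this shows that
$$\sup_{\varepsilon\in U\backslash\{0\}}\left\|z(y,\varepsilon,f)-\lim_{n\rightarrow\infty}u_n(\varepsilon,f)\right\|\longrightarrow0\quad\text{as }y\rightarrow+\infty,$$
since the difference equals $\bigl(\prod_{k>y/|\varepsilon|}B_k(\varepsilon)-I\bigr)z(y,\varepsilon,f)$ up to reindexing by one.

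With this uniform convergence in hand, the Moore--Osgood interchange-of-limits theorem applies to $z(y,\varepsilon,f)$: since for each $y$ the $\varepsilon$-limit exists and equals $h_\pm(y,f)$, and since the $y$-limit exists uniformly in $\varepsilon$, both iterated limits exist and agree. Therefore
$$\lim_{\varepsilon\rightarrow\pm0}\lim_{n\rightarrow\infty}u_n(\varepsilon,f)=\lim_{\varepsilon\rightarrow\pm0}\lim_{y\rightarrow+\infty}z(y,\varepsilon,f)=\lim_{y\rightarrow+\infty}\lim_{\varepsilon\rightarrow\pm0}z(y,\varepsilon,f)=\lim_{y\rightarrow+\infty}h_\pm(y,f),$$
which is exactly the assertion. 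The only delicate point is the uniformity in $\varepsilon$ of the $y$-limit, but this is precisely what Lemma \ref{lem convergence of the tail} was designed to provide, so there is no real obstacle remaining beyond collecting the previous lemmas in the correct order.
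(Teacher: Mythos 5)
Your proposal follows essentially the same approach as the paper: identify both iterated limits of the two-parameter expression $z(y,\varepsilon,f)$, use Lemma \ref{lem convergence of the tail} for uniform (in $\varepsilon$) convergence as $y\to+\infty$, and invoke the Moore--Osgood interchange. The paper's own proof is terser but rests on precisely the same ingredients; your spelling out how Lemmas \ref{lem convergence of the tail} and \ref{lem a priori estimate} combine to give the uniform bound is a correct elaboration of the step the paper takes for granted.
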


\begin{proof}
By definition of $h_{\pm}$,
$\lim\limits_{y\rightarrow+\infty}h_{\pm}(y,f)
=\lim\limits_{y\rightarrow+\infty}\lim\limits_{\varepsilon\rightarrow\pm0}
u_{\left\lfloor\frac y{|\varepsilon|}\right\rfloor}(\varepsilon,f)$.
By Lemma \ref{lem convergence of the tail},
\linebreak
$u_{\left\lfloor\frac y{|\varepsilon|}\right\rfloor}(\varepsilon,f)$
converges uniformly with respect to $\varepsilon\in U\backslash\{0\}$. Therefore there exist two limits,
$\lim\limits_{\varepsilon\rightarrow\pm0}
\lim
\limits_{y\rightarrow+\infty}u_{\left\lfloor\frac y{|\varepsilon|}\right\rfloor}(\varepsilon,f)$
which coincide with the limits
$\lim\limits_{y\rightarrow+\infty}\lim\limits_{\varepsilon\rightarrow\pm0}
u_{\left\lfloor\frac y{|\varepsilon|}\right\rfloor}(\varepsilon,f)$,
respectively. Clearly
$\lim\limits_{y\rightarrow+\infty}
u_{\left\lfloor\frac y{|\varepsilon|}\right\rfloor}(\varepsilon,f)
=
\lim\limits_{n\rightarrow\infty}u_n(\varepsilon,f)$
for $\varepsilon\neq0$, which implies that $\lim\limits_{\varepsilon\rightarrow\pm0}\lim\limits_{n\rightarrow\infty}u_n(\varepsilon,f)
=
\lim\limits_{y\rightarrow+\infty}\lim\limits_{\varepsilon\rightarrow\pm0}
u_{\left\lfloor\frac y{|\varepsilon|}\right\rfloor}(\varepsilon,f)
=
\lim\limits_{y\rightarrow+\infty}h_{\pm}(y,f)$.
This completes the proof.
\end{proof}

We are ready now to finish the proof of Theorem \ref{thm discrete-continuous}. Limits $\lim\limits_{\varepsilon\rightarrow\pm0}u_{\left\lfloor\frac y{|\varepsilon|}\right\rfloor}(\varepsilon,f)$ exist by Lemma \ref{lem convergence in the slow scale}, the equality $\lim\limits_{\varepsilon\rightarrow\pm0}\lim\limits_{n\rightarrow\infty}u_n(\varepsilon,f)
=\lim\limits_{y\rightarrow+\infty}h_{\pm}(y,f)$ has sense and holds true by Lemma \ref{lem equality of the limits}. The map $\Theta$ has the rank one by Lemma \ref{lem convergence of the product and degeneracy} (see also Remark \ref{rem about Theta}).
\end{proof}

\section{Zeroes of the spectral density}\label{section result}
In this section, we prove the main result of the paper by applying Theorem \ref{thm discrete-continuous}.

    \begin{thm}\label{thm result}
    Let $q_1\in L_1(\mathbb R_+)$, $\omega\notin\frac{\pi\mathbb Z}{2a}$, and $\rho'_{\alpha}(\lambda)$ be the spectral density of the operator $\mathcal L_{\alpha}$. Let the index $j$ be greater or equal to zero. If the solution $\varphi_{\alpha}(y,\nu_{j,+})$ of \eqref{phi} is not a subordinate one, then there exist two non-zero limits
    \begin{equation*}
        \lim_{\lambda\rightarrow\nu_{j,+}\pm0}\frac{\rho'_{\alpha}(\lambda)}{|\lambda-\nu_{j,+}|^{\frac{2|c|}{a|W\{\psi_+(\nu_{j,+}),\psi_-(\nu_{j,+})\}|}
        \l|\int\limits_0^a\psi^2_+(t,\nu_{j,+})e^{2i\omega t}dt\r|}}.
    \end{equation*}
    Analogously, if $\varphi_{\alpha}(y,\nu_{j,-})$ is not subordinate, then there exist two non-zero limits
    \begin{equation*}
        \lim_{\lambda\rightarrow\nu_{j,-}\pm0}\frac{\rho'_{\alpha}(\lambda)}{|\lambda-\nu_{j,-}|^{\frac{2|c|}
        {a|W\{\psi_+(\nu_{j,+}),\psi_-(\nu_{j,+})\}|}
        \l|\int\limits_0^a\psi^2_-(t,\nu_{j,-})e^{2i\omega t}dt\r|}}.
    \end{equation*}
    \end{thm}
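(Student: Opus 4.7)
The strategy is to compose Lemma~\ref{lem Weyl-Titchmarsh formula} with Theorem~\ref{thm discrete-continuous}, absorbing the logarithmic singularity hidden in the scaling substitution $v_n=\exp\bigl(\beta\int_1^n\frac{\cos(\varepsilon r)}{r}dr\bigr)u_n$ that was introduced at the start of Section~\ref{section model problem}. Lemma~\ref{lem Weyl-Titchmarsh formula} gives
\[
\rho'_\alpha(\lambda)=\frac{2}{\pi|W\{\psi_+(\lambda),\psi_-(\lambda)\}|\,\bigl\|\lim\limits_{\ninf}v_{\alpha,n}(\varepsilon)\bigr\|^2},
\qquad \varepsilon=2(k(\lambda)-k(\nu_{cr})),
\]
and, writing $f_\alpha(\varepsilon):=v_{\alpha,1}(\varepsilon)=u_{\alpha,1}(\varepsilon)$ for the initial datum delivered by $\Xi$ applied to $\varphi_\alpha$,
\[
\bigl\|\lim_{\ninf}v_{\alpha,n}(\varepsilon)\bigr\|^2
=\exp\l(2\beta\int_1^{\infty}\tfrac{\cos(\varepsilon r)}{r}dr\r)\bigl\|\lim_{\ninf}u_{\alpha,n}(\varepsilon,f_\alpha(\varepsilon))\bigr\|^2.
\]
This splits the problem into two asymptotic tasks as $\varepsilon\to\pm0$: pinning down the singular behavior of the exponential factor, and showing that the $u$-factor tends to a finite, non-zero limit.

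The exponential factor is handled by the substitution $s=|\varepsilon|r$ together with the cosine-integral expansion $\int_{|\varepsilon|}^{\infty}\cos s/s\,ds=-\gamma-\ln|\varepsilon|+O(\varepsilon^2)$, which yields $\exp\bigl(2\beta\int_1^{\infty}\cos(\varepsilon r)/r\,dr\bigr)\sim e^{-2\beta\gamma}|\varepsilon|^{-2\beta}$. The $u$-factor is handled by Theorem~\ref{thm discrete-continuous}: since $f_\alpha(\varepsilon)$ depends continuously on $\varepsilon$ through the explicit formula \eqref{v-alpha}, it suffices to apply the theorem to the fixed vector $f_\alpha(0)$ and control the small discrepancy, giving
\[
\lim_{\varepsilon\to\pm0}\lim_{\ninf}u_{\alpha,n}(\varepsilon,f_\alpha(\varepsilon))=\lim_{y\to+\infty}h_\pm(y,f_\alpha(0)).
\]
Combining the two pieces yields $\rho'_\alpha(\lambda)\sim C_\pm|\varepsilon|^{2\beta}$ as $\varepsilon\to\pm0$; and since $\nu_{cr}$ lies in the interior of a band where $k(\cdot)$ is smooth and monotone, the conversion $|\varepsilon|\sim 2|k'(\nu_{cr})|\,|\lambda-\nu_{cr}|$ turns this into the claimed power law in $|\lambda-\nu_{cr}|$, with the stated constant-free exponent coming from the explicit form of $\beta=|\beta_+(\nu_{cr})|$ obtained in Section~\ref{section discretization}. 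The case $\nu_{cr}=\nu_{j,-}$ is identical in structure, the only change being that in the Harris-Lutz reduction of Lemma~\ref{lem properties of Harris-Lutz transformation} one keeps the block containing $\beta_-$ instead of $\beta_+$, which replaces $\psi_+^2$ by $\psi_-^2$ in the Fourier integral defining $\beta$.

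The step that genuinely requires care is identifying $\Theta f_\alpha(0)\neq 0$ with non-subordinacy of $\varphi_\alpha(\cdot,\nu_{cr})$; everything else is bookkeeping. Since $\Theta$ has rank one by Lemma~\ref{lem convergence of the product and degeneracy}, its kernel is spanned by the initial value of the decaying basis solution of $u_{n+1}=B_n(0)u_n$, which behaves as $n^{-2\beta}$. Undoing the substitution $v=\exp(\beta\int\cos/r)u$ converts this into a $v$-solution of order $n^{-\beta}$, and tracing back through the chain of bounded, invertible, $\lambda$-continuous maps $v\leftrightarrow\widetilde w\leftrightarrow w\leftrightarrow\eta\leftrightarrow\psi$ produced in Section~\ref{section discretization} shows that this is precisely the (Bloch-modulated) $x^{-\beta}$ subordinate solution of $l\psi=\nu_{cr}\psi$. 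Consequently $f_\alpha(0)\in\ker\Theta$ is equivalent to subordinacy of $\varphi_\alpha(\cdot,\nu_{cr})$, which is the situation excluded by hypothesis; the non-vanishing clause of Theorem~\ref{thm discrete-continuous} therefore applies and a direct substitution completes the proof.
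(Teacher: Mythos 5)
Your proof is correct and follows exactly the paper's own route: Lemma~\ref{lem Weyl-Titchmarsh formula} to pass from $\rho'_\alpha$ to $\lim_{n}v_{\alpha,n}(\varepsilon)$, the scaling $v_n=\exp\bigl(\beta\int_1^n\frac{\cos(\varepsilon r)}{r}\,dr\bigr)u_n$ to isolate the singular factor $|\varepsilon|^{-\beta}$, Theorem~\ref{thm discrete-continuous} combined with the a~priori bound of Lemma~\ref{lem a priori estimate} and linearity in $f$ (which you correctly invoke to handle the $\varepsilon$-dependent initial datum $v_{\alpha,1}(\varepsilon)$), the $\ker\Theta\leftrightarrow$\,subordinacy identification via the rank-one structure of $\Theta$ and the chain of bounded invertible maps from Section~\ref{section discretization}, and finally $k'>0$ to convert $|\varepsilon|$ into $|\lambda-\nu_{cr}|$; the $\nu_{j,-}$ case is indeed obtained by keeping the $\beta_-$ block in the Harris--Lutz step, as you say. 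One bookkeeping remark: the exponent $2\beta=\frac{|c|}{a|W\{\psi_+(\nu_{cr}),\psi_-(\nu_{cr})\}|}\bigl|\int_0^a\psi_+^2(t,\nu_{cr})e^{2i\omega t}dt\bigr|$ that both you and the paper's own argument produce is half of the exponent displayed in the theorem (which carries an apparently spurious extra factor of $2$); your $2\beta$ is the value that reduces correctly to the classical Wigner--von Neumann case $q\equiv0$ and matches the Aronszajn--Donoghue threshold discussed in the introduction, so the discrepancy is in the theorem's display, not in your argument.
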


\begin{proof}
We use the notation of Section \ref{section discretization}: $\nu_{cr}$, $\beta$ instead of $\nu_{j,\pm}$, $\beta_{j,\pm}$. This will yield both claims of the theorem. By \eqref{v-alpha}, the solution $\varphi_{\alpha}(y,\lambda)$ is related to the solution $\{v_{\alpha,n}(\varepsilon)\}_{n=1}^{\infty}$
of \eqref{system for v with epsilon}. In turn, the latter is related by the scaling
\linebreak
$v_n=\exp\left(\beta\int_1^n\frac{\cos(\varepsilon r)}rdr\right)u_n$
to the solution
$\{u_n(\varepsilon,v_{\alpha,1}(\varepsilon))\}_{n=1}^{\infty}$
of the system $u_{n+1}=B_n(\varepsilon)u_n$. We now apply Theorem to the sequence \ref{thm discrete-continuous} to
$\{u_n(\varepsilon,v_{\alpha,1}(\varepsilon))\}_{n=1}^{\infty}$.
Non-subordinacy of $\varphi_{\alpha}(\nu_{cr})$ means that
$\lim\limits_{n\rightarrow\infty}u_n(0,v_{\alpha,1}(0))\neq0$.
Due to continuity of $v_{1,\alpha}(\cdot)$ in $U$, Theorem \ref{thm discrete-continuous} implies the existence of the following two limits
\begin{equation*}
    \lim_{\varepsilon\rightarrow\pm0}\lim_{\ninf}u_n(\varepsilon,v_{\alpha,1}(\varepsilon))\neq0.
\end{equation*}
Taking into account the scaling $v_n=\exp\left(\beta\int_1^n\frac{\cos(\varepsilon r)}rdr\right)u_n$ this means that there exist two non-zero limits $\lim\limits_{\varepsilon\rightarrow\pm0}\lim\limits_{n\rightarrow\infty}|\varepsilon|^{\beta}v_{\alpha,n}(\varepsilon)$. This in turn yields the assertion of the theorem due to the Weyl-Titchmarsh type formula \eqref{Weyl-Titchmarsh type formula v} and the property of the quasi-momentum that its derivative is positive inside spectral bands.
\end{proof}

\section{Discussion}\label{section discussion}

As mentioned in Introduction, our result is to be compared to the
one of Hinton-Klaus-Shaw \cite{Hinton-Klaus-Shaw-1991}. The
difference in the classes of operators considered may seem
significant: in \cite{Hinton-Klaus-Shaw-1991} there is no periodic
background and the non-summable part of the potential is an
infinite sum of Wigner-von Neumann type terms. Nevertheless, both
problems can be written after a few suitable transformations in
virtually the same form, see system \eqref{system for eta} in our
case and the formula prior to (2.2) in
\cite{Hinton-Klaus-Shaw-1991}. We are also able to consider finite
or even infinite sum of Wigner-von Neumann type terms and to
reduce the problem to the same form by analogous transformations.
The methods that we use also have common traits like usage of the
slow variable $y=|\varepsilon|n=|\varepsilon|x/a$ or work with the
limit integral Volterra equation like \eqref{equations for h}
which actually represents an equation of the same type as (2.75)
in \cite{Hinton-Klaus-Shaw-1991} but is written in a different, more
explicit form. However, the approach that we develop in the
present paper to our mind has several essential distinctions compared to the
approach of \cite{Hinton-Klaus-Shaw-1991}.

1. We use a discretization of the differential system and  work
with the discrete model system of a ``simple'' form \eqref{system
for v with epsilon}. In addition to bringing certain technical
(inessential) difficulties it makes our method more universal and enables one to
consider a wider class of problems. As an example, we can analyze by
a direct application of Theorem \ref{thm discrete-continuous} the
structure of zeroes of the spectral density of the discrete
Schr\"odinger operator with Wigner-von Neumann potential. This is
by far not the only possible example. Using the solution of the
model problem one can also consider differential Schr\"odinger
operators with point interactions at integer points with strengths
of interaction forming a sequence of the Wigner-von Neumann form.

2. We do not reduce the problem to a scalar one  and use the
analysis of vector equations instead. This makes the analysis
simpler and more transparent. As a consequence of this, the form
of the limit equation \eqref{equations for h} is explicit and can
be guessed from heuristic considerations (see \eqref{system for u
form with the difference}-\eqref{equations for h-pm
differential}).

3. We avoid fine technically involved estimates of   oscillatory
integrals and in fact our proof is divided into two parts. First
(Step I), we prove rather rough a priori estimates by dividing the
positive half-line into subintervals $\mathbb
R_+=\bigl(0;\frac{c_2}{|\varepsilon|}\bigr]\cup\bigl(\frac{c_2}{|\varepsilon|};+\infty\bigr)$
for  sufficiently large $c_2$. Second (Steps II-IV), we deal with
the slow scale and prove the convergence using operator
techniques. This is the main distinction of our method which makes
it sufficiently simpler than the method of
\cite{Hinton-Klaus-Shaw-1991}.

We expect  that using the technique developed in the present paper
one can treat a more complicated case of the Wigner-von Neumann
perturbation of non-Coulomb type with slowly decaying power part.
We plan to address this problem in the future.

\section*{Acknowledgements}
Authors wish to express their gratitude to Dr. Roman Romanov for
helpful remarks and fruitful discussions and to Dr. Alexander V.
Kiselev for his help with the work on the text and many useful
remarks and comments. The work was supported by the grant
RFBR-09-01-00515-a. The second author was also supported by the Chebyshev Laboratory
(Department of Mathematics and Mechanics, Saint-Petersburg State
University) under the grant 11.G34.31.2006 of the Government of the
Russian Federation.

\end{document}